\documentclass[12pt]{amsart}

\usepackage{verbatim}
\usepackage{hyperref}
\usepackage{cite}
\usepackage{adjustbox}
\usepackage{amssymb}
\usepackage{amsmath}
\usepackage{amsthm}
\usepackage{amsfonts}
\usepackage{float}
\usepackage{graphicx}%
\usepackage{cancel}
\usepackage{comment}
\usepackage{bbm}
\usepackage{xcolor}
\usepackage[toc,page]{appendix}
\usepackage{mathtools}
\usepackage{soul}

\usepackage[plain]{fullpage}
\usepackage[stretch=10]{microtype}
\usepackage{todonotes}
\usepackage{bm}
\usepackage{enumitem}

\numberwithin{equation}{section}
\theoremstyle{plain}
\newtheorem{theorem}{Theorem}[section]
\newtheorem{lemma}[theorem]{Lemma}
\newtheorem{corollary}[theorem]{Corollary}

\theoremstyle{definition}
\newtheorem{definition}[theorem]{Definition}
\newtheorem*{defs*}{Definition}

\theoremstyle{remark}
\newtheorem{remark}[theorem]{Remark}
\newtheorem*{ass*}{Assumption}
\newtheorem*{ack*}{Acknowledgements}

\newcommand{\R}{\mathbb{R}}

\newcommand{\rmin}{r_{\operatorname{min}}}
\newcommand{\rmax}{r_{\operatorname{max}}}
\usepackage{setspace}

\title{Optimal Synthesis on a Radially Symmetric Grushin Space}
\author{Michael Albert}
\date{February 2026}

\begin{document}
\begin{abstract}
We study the geometry of $\mathbb{R}^3$ equipped with a rotationally invariant Carnot-Carth\'{e}odory metric obtained by weighting motion in the $z$-direction by a function $f(r)$ of the cylindrical radius. When $f$ vanishes only at $r=0$, the space exhibits a Grushin--type singularity along the vertical axis. We provide sufficient conditions on $f$ ensuring a Grushin--like structure and describe the full optimal synthesis at singular points. For Riemannian points, we propose a candidate cut time determined by a discrete symmetry of the Hamiltonian flow. In the integrable case $f(r)=r$, we prove that this candidate coincides with the true cut time and give an explicit description of the cut locus. 
\end{abstract}
\maketitle

\section{Introduction}
We consider the following vector fields on $\mathbb{R}^3$. Let
\begin{align} X=\partial_x,\qquad Y=\partial_y,\qquad Z_f=f(r)\partial_z,\qquad q=(x,y,z)\in \mathbb{R}^3,\end{align}
where $r=\sqrt{x^2+y^2}$ is the radial component in cylindrical coordinates on $\mathbb{R}^3$. The function $f$ is taken to belong to the family $\mathfrak{F}\subset C^0[0,\infty)$ defined by the following properties. We say that $f\in \mathfrak{F}$ if
\begin{enumerate}
    \item\label{A1} $f>0$ except for $f(0)=0$.
    \vspace{.25cm}
    \item\label{A2} $f\rvert_{(0,\infty)}\in C^2(0,\infty)$
    \vspace{.25cm}
    \item $\frac{f(r)\dot{f}(r)}{r}$ extends to be continuous at $r=0$.
    \vspace{.25cm} 
    \item\label{A3} $f$ is strictly increasing and $f(r)\to+\infty$ as $r\to+\infty$.
    \vspace{.25cm}
    \item \label{A4} $f^2(r)/\dot{f}(r)\to+\infty$ as $r\to+\infty$.
\end{enumerate}
Our objective in this paper is to study the Carnot-Carth\'{e}odory (CC) geometry generated by $\{X,Y,Z_f\}$, namely geodesics and their cut and conjugate times. This is a Grushin type space, which is Riemannian away from the singular set $\Sigma=\{r=0\}$. Canonical examples of the function $f\in \mathcal{F}$ include the monomial functions $f(r)=r^\alpha$ for $\alpha\geq 1$. As such, the resulting length space that we consider has similar geometric properties to the $\alpha$-Grushin plane, higher dimensional $\alpha$-Grushin spaces, and related constructions, as discussed in a variety of sources. A non-exhuastive list of sources that have considered Grushin style spaces is \cite{Borza2022,AgrachevBarilariBoscainBook2020,BoscainNeel2020,WuJang-Mei2015,BieskeGong2006,TurcanuUdriste2017,ChangLi2012,GalloneMichelangeliPozzoli2019,albert2025geodesicsgrushinspaces,BAUER2015188,abatangelo2024solutionsclassdegenerateequations,LIU2018237}. Our setting is most similar to that discussed in \cite{abatangelo2024solutionsclassdegenerateequations,BAUER2015188,LIU2018237}. Our analysis is concerned with the classification of geodesics and draws particularly strongly from \cite{Borza2022} and  \cite[Chapter 13]{AgrachevBarilariBoscainBook2020}. 

We note that $\mathfrak{F}$ contains many functions which are not monomials. For example, $f(r)=r^\alpha\log(r+1)^\beta$ for any $\alpha\geq 1,\beta\geq 0$ is also in $\mathfrak{F}.$ The condition (\ref{A4}) is important for ensuring the existence of certain minimizing geodesics. A departure from traditional sources in our setting is that we allow $f$ to vanish to all orders on $\Sigma$, which precludes the possibility of H\"{o}rmander's condition holding at points on this set. We note that this encompasses, for instance, the example where $f(r)\sim e^{-1/r^2}$ near $\Sigma$. 

Our analysis provides a complete optimal synthesis of geodesics at singular points in Theorem \ref{singular point optimal synth}. Since we do not have H\"{o}rmander's condition, the Chow-Raschevskii theorem does not directly apply. However, even though H\"{o}rmander's condition fails on $\Sigma$, we use the optimal synthesis on $\Sigma$ to show that the conclusion of the Chow-Raschevskii Theorem holds for $(\mathbb{R}^3,d_{CC})$ and that the metric $d_{CC}$ is complete in Theorem \ref{metric}. In Theorem \ref{Ball box Theorem} we obtain a so-called ``ball-box estimate'' on the metric $d_{CC}$. 

We also find new bounds on cut times in Theorem \ref{Geodesic and Cut time Theorem} for geodesics from Riemannian points, applicable to all $f\in \mathcal{F}$. We obtain a useful reduction on the Jacobian determinant of the exponential in Lemma \ref{Jacobian Reduction Lemma}, which we apply in the integrable case $f(r)=r$ to explicitly compute conjugate times for geodesics starting from Riemannian points in Lemma \ref{Factorization Lemma}. The Extended Hadamard technique (Theorem \ref{extended Hadamard}) then shows that the cut time bound in Theorem \ref{Geodesic and Cut time Theorem} is actually the true cut time in the $f(r)=r$ setting. In the non-integrable $f\in \mathfrak{F}$ setting, a full optimal synthesis for Riemannian points appears presently out of reach, as \emph{conjugacy} appears to be highly sensitive to radial dynamics in a way that depends non-trivially on $f(r)$.
\subsection{Background and Definitions}

Due to a lack of H\"{o}rmander's condition on $\Sigma$, the vector fields $\{X,Y,Z_f\}$ do not induce a sub-Riemannian structure on $\mathbb{R}^3$ in the usual sense, e.g as in \cite{AgrachevBarilariBoscainBook2020}. However, many of the usual tools deployed in the study of sub-Riemannian structures will still be applicable, and we will find that not much is lost due to the lack of regularity and H\"{o}rmander's condition. We proceed by defining the notions of \emph{admissible curve}, \emph{metric length} and \emph{the Carnot-Cartheodory distance.} 
\begin{definition}
    An absolutely continuous curve $\gamma=(x,y,z):[0,T]\rightarrow \mathbb{R}^3$ is called \emph{admissible} if there is a \emph{control} $u=(u_1,u_2,u_3)\in L^2([0,T];\mathbb{R}^3)$ such that \begin{align}\label{control definition} \gamma'(t)=u_1(t)X(\gamma(t))+u_2(t)Y(\gamma(t))+u_3(t)Z_f(\gamma(t)),\qquad \text{a.e}\,\, t\in [0,T].\end{align} 
\end{definition}
\begin{definition}
    The \emph{metric length} of $\gamma$ defined as \[\ell(\gamma):=\int_0^T \lvert\lvert u^*(t)\rvert\rvert\, dt=\int_0^T\sqrt{\dot{x}^2(t)+\dot{y}^2(t)+\frac{\dot{z}^2(t)}{f(r(t))^2}}\,dt,\]
    where $u^*\in L^2([0,T],\mathbb{R}^3)$ is the \emph{minimal control} defined by taking $u^*(t)\in \mathbb{R}^3$ to be the unique minimizer of $\lvert u\rvert^2$ among all $u=(u_1,u_2,u_3)\in \mathbb{R}^3$ satisfying $\gamma'(t)=u_1X(\gamma(t))+u_2Y(\gamma(t))+u_3Z_f(\gamma(t))$. The curve $\gamma$ is \emph{parametrized by constant speed} if the control satisfies $\lvert \lvert u(t)\rvert\rvert=c$ for some $c>0$ almost everywhere. It is further called \emph{arc length parametrized} if $c=1$. 
\end{definition}
The proof that $u^*$ is measurable and can be taken to be in $L^2$ is non-trivial, but follows from the general theory of control systems with quadratic cost, and can be found in \cite[Chapter 3.1]{AgrachevSachkov}.
\begin{definition}
    Let $T>0$. Let $\gamma=(x,y,z):[0,T]\rightarrow \mathbb{R}^3$ be an admissible curve. Put $q_1=\gamma(0)$ and $q_2=\gamma(T)$. If $\ell(\gamma)\leq \ell(\tilde{\gamma})$ for all admissible $\tilde{\gamma}$ with endpoints $q_1,q_2$, then $\gamma$ is called a \emph{length minimizer}. If for every $t\in [0,T)$, there is $\varepsilon>0$ such that $\gamma\rvert_{[t,t+\varepsilon]}$ is an length minimizer parametrized by constant speed, then $\gamma$ is called a \emph{geodesic}.
\end{definition}
See Chapter 5.1 of \cite{HeinonenKoskelaShanmugalingamTyson2001} for a proof applicable to our setting that all finite length admissible curves can be reparametrized without affecting the value of the metric length $\ell(\gamma)$.
\begin{theorem}\label{metric}
The Carnot-Carth\'{e}odory metric 
    \begin{align}
        d_{CC}(q,q')=\inf\{\ell(\gamma): \gamma\,\,\text{is admissible}\,\,,\gamma(0)=q,\gamma(T)=q'\}
    \end{align}
    is well defined and induces the Euclidean topology on $\mathbb{R}^3.$ Furthermore, $(\mathbb{R}^3,d_{CC})$ is a complete metric space.
\end{theorem}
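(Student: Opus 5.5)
Our strategy is to prove well-definedness (finiteness and positivity), then equivalence of topologies, then completeness. The argument uses only elementary comparisons with the Euclidean metric together with explicit admissible curves. We never appeal to Chow--Raschevskii, since H\"{o}rmander's condition may fail on $\Sigma$.

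\textbf{Step 1: finiteness and an upper bound.} Given $q=(x,y,z)$ and $q'=(x',y',z')$, we build an explicit admissible curve in three legs. First move horizontally from $(x,y)$ to a point at radius $\rho$, for some fixed $\rho>0$. Then travel vertically at that radius, which costs $|z-z'|/f(\rho)$. Finally move horizontally to $(x',y')$. Every leg is admissible with $L^2$ controls, so $d_{CC}(q,q')<\infty$. Symmetry and the triangle inequality follow by reversing and concatenating curves.

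For continuity of $d_{CC}$ with respect to the Euclidean topology near $q$, we use the following bound. Let $r_0=r(q)$ and $\delta=|q-q'|$. With $\rho=\max(r_0,\delta^{1/2})$, a detour of horizontal length $O(\delta^{1/2}+\delta)$ combined with a vertical cost of $\delta/f(\rho)$ gives
\[d_{CC}(q,q')\le C\delta^{1/2}+\frac{\delta}{f(\max(r_0,\delta^{1/2}))}.\]
We then check that $\delta/f(\delta^{1/2})\to0$ as $\delta\to0$. This holds because $f$ is increasing, continuous and positive away from $0$: simply choose $\rho(\delta)\to0$ slowly enough that $\delta/f(\rho(\delta))\to0$, which is always possible. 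Hence $d_{CC}(q,q')\to0$ as $q'\to q$.

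\textbf{Step 2: positivity and a lower bound.} Horizontal motion always costs at least its Euclidean projection length, so $d_{CC}(q,q')\ge|(x,y)-(x',y')|$. If the planar projections agree but $z\ne z'$, then any admissible curve either stays in the ball $\{r\le R\}$, or it leaves that ball and thereby incurs horizontal length at least $R-r_0$. Inside the ball $|\dot z|\le f(R)|u_3|$, so the length is at least $|z-z'|/f(R)$. This gives
\[d_{CC}(q,q')\ge\min\Bigl(R-r_0,\ \frac{|z-z'|}{f(R)}\Bigr)>0.\]
The same estimate, applied with $R=r_0+1$, shows that $d_{CC}$-small balls are contained in Euclidean-small balls. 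Combined with Step 1, this proves that the two topologies coincide.

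\textbf{Step 3: completeness.} Let $(q_n)$ be a $d_{CC}$-Cauchy sequence. It is $d_{CC}$-bounded, so it lies in some fixed ball $B_{CC}(q_1,M)$. By the lower bound of Step 2, taken with $R=r(q_1)+M+1$, this ball is Euclidean-bounded. Indeed, $r$ varies by at most $M$ along the sequence, and the $z$-displacement is at most $Mf(R)$. The Cauchy property in $d_{CC}$ then transfers to a Euclidean Cauchy property via the inequality of Step 2 applied locally. Hence $q_n\to q$ in the Euclidean sense. Since the topologies agree, $q_n\to q$ in $d_{CC}$ as well.

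The step I expect to be most delicate is the uniformity of the continuity estimate near $\Sigma$, where $f$ may vanish to infinite order, so that $f(\rho)$ can be extremely small for small $\rho$. The fix is that the choice of $\rho(\delta)$ in Step 1 may depend on $f$ arbitrarily; we only need $\delta/f(\rho(\delta))\to0$ together with $\rho(\delta)\to0$. Conditions (4)--(5) on $f$ are not needed for this theorem.
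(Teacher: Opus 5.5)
Your argument is correct, and it takes a genuinely different, more elementary route than the paper.

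\textbf{The paper's route.} The paper obtains the topological equivalence from the optimal synthesis at singular points. It identifies $B_{CC}(0,\delta)$ with the set of endpoints of unit-speed geodesics from $0$ at times $t<\min\{\delta,T(\lambda_0)\}$. It then sandwiches this set between the cylinders $\{r<c\delta\}\times\{|z|<c'\delta f(\delta)\}$ and $\{r<\delta\}\times\{|z|<\delta f(\delta)\}$, and uses the Riemannian structure off $\Sigma$. Completeness is proved by showing closed balls are compact. For centres on $\Sigma$ this uses rotational symmetry and the shape of the planar slices. For centres off $\Sigma$ it uses bounds on $r$ and on $|\dot z|=|w_0|f(r)^2$ along normal geodesics.

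\textbf{Your route.} You use explicit three-leg competitors for the upper bound. For the lower bound you use the estimate $|\dot z|\le f(R)|u_3|$, valid for \emph{every} admissible curve confined to $\{r\le R\}$.

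\textbf{What each buys.} Your route is independent of the Pontryagin principle and of Theorem~\ref{singular point optimal synth}, hence of hypothesis (5), $f^2/\dot f\to+\infty$. It is also independent of the existence of minimizers. The paper's description of balls as unions of geodesic arcs, and its bound on $z$ off $\Sigma$, presuppose that each point of a ball is reached by a normal minimizer, which is usually deduced from completeness itself. The paper's route gives the sharp vertical scale $\delta f(\delta)$ of small balls centred on $\Sigma$. Your competitor recovers this too if you take $\rho=h(\delta)$, with $h$ the inverse of $s\mapsto sf(s)$. That choice gives $d_{CC}(q,q')\le 3h(\delta)+\delta$ uniformly in $q$, and your Step 2 is essentially the mechanism of the lower bound in Theorem~\ref{Ball box Theorem}.

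\textbf{A false claim in Step 1.} The sentence asserting $\delta/f(\delta^{1/2})\to0$ is false in general and should be deleted. As a result, the displayed bound with $\rho=\max(r_0,\delta^{1/2})$ does not give continuity at points of $\Sigma$. Already for $f(r)=r^2\in\mathfrak F$ the ratio is identically $1$, and for $f(r)=e^{-1/r^2}$ it blows up. The remedy you state right afterwards is the correct one. Take for instance $\rho(\delta)=f^{-1}(\sqrt\delta)$, which is legitimate since $f$ is a homeomorphism of $[0,\infty)$. Then $d_{CC}(q,q')\le 2\rho(\delta)+\delta+\sqrt\delta\to0$, and the horizontal cost is $O(\rho(\delta)+\delta)$ rather than $O(\delta^{1/2}+\delta)$.

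\textbf{A minor inaccuracy.} As written, you do use the monotonicity in hypothesis (4): it lets you replace $\sup_{[0,R]}f$ by $f(R)$ and $f(\max(r_0,\rho))$ by $f(\rho)$. This is inessential, so your remark that (4) is not needed is only slightly overstated.
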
 
 The first half of the paper is organized as follows. We postpone the proof of Theorem \ref{metric} until after we have already constructed and analyzed the Hamiltonian system in the next section. We will produce the necessary length minimizers to justify topological equivalence and completeness of the Carnot-Cartheodory metric.

Indeed, although it must be justified, the length minimizers in $(\mathbb{R}^3,d_{CC})$ can still be understood at the level of controls through Hamiltonian theory and the Pontryagin Maximum Principle as is the case for genuinely sub-Riemannian structures in the sense of \cite{AgrachevBarilariBoscainBook2020}. 
\section{Hamiltonian Theory}
\subsection{Control and Symplectic Theory}
The Hamiltonian $H_f$ is a function of the position $q=(x,y,z)$ and a momentum vector (covector) $\lambda=(u,v,w)$ on the cotangent bundle $T^*\mathbb{R}^3\cong \mathbb{R}^3\times \mathbb{R}^3$. It is given by \begin{align}H_f(q,\lambda)=\frac{1}{2}(u^2+v^2+f(r)^2 w^2),\qquad (q,\lambda)\in T^*\mathbb{R}^3\cong \mathbb{R}^3\times\mathbb{R}^3.
\end{align}
Notice that since $\frac{f(r)\dot{f}(r)}{r}$ extends to be continuous at $r=0$ by hypothesis, $H_f$ is a globally $C^1$ function on $T^*\mathbb{R}^3$. Let $\theta=udx+vdy+wdz$ be the \emph{Louiville 1-form}, and $\sigma=-d\theta$ the \emph{canonical symplectic} form on $T^*\mathbb{R}^3$. Let $\Sigma^*=\pi^{-1}(\Sigma)$, where $\pi$ is the natural projection on $T^*\mathbb{R}^3$. Since $\sigma$ is non-degenerate, there is a unique locally Lipschitz vector field $\vec{H}_f$ that is $C^1$ away from $T^*\Sigma$ such that 
\begin{align}
\sigma(\cdot,\vec{H}_f)=d_{(q,\lambda)}H.
\end{align}
We look for length minimizers among the projections of the integral curves of the Hamiltonian vector field $\vec{H}_f$. Indeed, the Pontryagin Maximum Principle provides a necessary condition on the lifts of length minimizers. The following theorem and proof are adapted from the book of Agrachev and Sakhchov \cite{AgrachevSachkov}, and we include it for completeness, since our set up is not strictly sub-Riemannian.
\begin{theorem}[Pontryagin Maximum Principle]
    For $T>0$, let $\gamma=(x,y,z):[0,T]\rightarrow \mathbb{R}^3$ be a length minimizer parametrized with constant speed $c>0$. Then, there is an absolutely continuous lift (extremal) $(\gamma,\lambda):[0,T]\rightarrow T^*\mathbb{R}^3$ such that either \begin{align}\label{Normal Condition}
(\dot{\gamma}(t),\dot{\lambda}(t))=\vec{H}_f(\gamma(t),\lambda(t))\,\quad \text{a.e}\,\, t\in [0,T]
    \end{align}
    \begin{align}\label{Abnormal Condition}H_f(\gamma(t),\lambda(t))=&\,0\,\quad t\in [0,T].
    \end{align}
    If $\eqref{Abnormal Condition}$ holds for some $\lambda$, then $\gamma$ is called an abnormal trajectory. If \eqref{Normal Condition} holds, then $\gamma$ is called a normal trajectory and furthermore, it holds that $(\gamma(\cdot),\lambda(\cdot))\in H_f^{-1}(\sqrt{c/2})$. 
\end{theorem}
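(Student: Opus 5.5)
We follow the standard proof of the Pontryagin Maximum Principle for control systems with quadratic cost, as in \cite{AgrachevSachkov}, adapted to the fact that $\{X,Y,Z_f\}$ is only continuous on $\Sigma$ and $H_f$ is only $C^1$.

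\textbf{Step 1: reduce to an optimal control problem.} Since $\gamma$ minimizes length and is parametrized at constant speed, Cauchy--Schwarz shows that $\gamma$ also minimizes the energy $J(u)=\frac12\int_0^T\lvert u\rvert^2\,dt$ among admissible curves on $[0,T]$ with the same endpoints. We therefore consider the extended control system on $\mathbb{R}\times\mathbb{R}^3$:
\[\dot\gamma=u_1X+u_2Y+u_3Z_f,\qquad \dot J=\tfrac12\lvert u\rvert^2.\]
Let $F(u)=(J(T),\gamma_u(T))$ be the endpoint map of this extended system. Minimality of $u^*$ means that $F(u^*)$ lies on the boundary of the attainable set. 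Otherwise the attainable set would contain the point $(J(T)-\varepsilon,\gamma(T))$, giving an admissible curve with the same endpoints and strictly smaller energy.

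\textbf{Step 2: linearize along $\gamma$.} The vector fields $X,Y$ are constant. For $Z_f$, the coefficient $f(r)$ is $C^2$ for $r>0$, while on $\Sigma$ we only know $f(0)=0$ and that $f\dot f/r$ extends continuously. This is the main obstacle: the field $f(r)\partial_z$ may fail to be differentiable in $x,y$ at $r=0$ (for instance $f(r)=r$, where $f(r)=\sqrt{x^2+y^2}$ is only Lipschitz). Our plan is to work with the Lipschitz vector field $\vec H_f$, whose existence was noted above, rather than with the field $Z_f$ itself.

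We use needle variations of the control. Then $F$ has a Fr\'echet-type differential only in a generalized (Clarke) sense, so we replace the linearization by a convex cone of variations. Concretely, at each Lebesgue point $\tau$ of $u^*$ we take the variation vectors $(\frac12\lvert v\rvert^2-\frac12\lvert u^*(\tau)\rvert^2,\ \sum_i (v_i-u^*_i(\tau))X_i(\gamma(\tau)))$ and transport them to time $T$ by the flow of the time-dependent field $u^*_1X+u^*_2Y+u^*_3Z_f$. This flow is Lipschitz, so it is differentiable a.e., and the transport can be defined via Clarke Jacobians or via an approximation $f_\epsilon$ that is smooth near $0$.

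If the resulting cone were all of $\mathbb{R}^4$, a Brouwer or open-mapping argument, which needs only continuity, would put $F(u^*)$ in the interior of the attainable set, a contradiction. So a nonzero covector $(\nu,\lambda_T)$ with $\nu\le0$ separates the cone.

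\textbf{Step 3: build the lift and split into cases.} Transport $\lambda_T$ backwards by the adjoint flow to obtain an absolutely continuous $\lambda(t)$. The separation inequality then becomes a pointwise maximum condition for a.e. $t$:
\[u^*(t)\in\arg\max_v\Big(\langle\lambda(t),\textstyle\sum_i v_iX_i(\gamma(t))\rangle+\tfrac{\nu}{2}\lvert v\rvert^2\Big).\]
\emph{Normal case, $\nu<0$.} Normalize $\nu=-1$. Maximizing gives $u^*=(\langle\lambda,X\rangle,\langle\lambda,Y\rangle,\langle\lambda,Z_f\rangle)=(u,v,f(r)w)$. Substituting into the adjoint equation identifies it with Hamilton's equations, i.e. \eqref{Normal Condition}. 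This step uses only $\partial H_f$, which is continuous because $H_f\in C^1$. Conservation of $H_f$ along the flow, together with $\lvert u^*\rvert=c$, fixes the level set of $H_f$.

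\emph{Abnormal case, $\nu=0$.} Maximizing a linear function of $v$ forces $\langle\lambda,X_i(\gamma)\rangle=0$ for each $i$, i.e. $u=v=f(r)w=0$, so $H_f(\gamma(t),\lambda(t))=0$, which is \eqref{Abnormal Condition}.

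Alternatively, to sidestep the regularity issue entirely, one can approximate $f$ by $f_\epsilon$ smooth and positive. This gives Riemannian, hence smooth, problems whose minimizers converge by Arzel\`a--Ascoli. One would then pass to the limit in the extremal equations using the continuity of $\partial H_f$, normalizing the covectors to handle the abnormal limit. Controlling the convergence of the minimizers themselves is the delicate point in this route.
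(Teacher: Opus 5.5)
Your Steps 1 and 3 are the standard geometric PMP and are fine. Step 2, however, is the only place where this problem differs from a smooth one, and it is not carried out; the mechanisms you propose for it do not work as stated.

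\textbf{The gap in Step 2.} The cone argument needs a first-order expansion of the needle-varied endpoints, i.e.\ differentiability of the flow of $u_1^*X+u_2^*Y+u_3^*Z_f$ at the points of $\gamma$ itself. None of your proposed mechanisms supplies this.
\begin{itemize}
\item Rademacher's theorem gives differentiability of a Lipschitz flow only off a null set of initial points. The curve $\gamma$ is itself such a null set, so this says nothing along $\gamma$.
\item Clarke Jacobians are set-valued, so ``transporting'' variation vectors through them does not yield a single convex cone. That route amounts to invoking Clarke's nonsmooth PMP, which gives only an adjoint inclusion.
\item The Brouwer/open-mapping lemma requires not just continuity but precisely this one-sided derivative at $\varepsilon=0$.
\item Working with $\vec{H}_f$ instead of $Z_f$ is not an option. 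The variations and the adjoint equation are governed by $q\mapsto\langle\lambda,F(q,u^*(t))\rangle$, whose $q$-derivative $\lambda_3u_3^*\dot f(r)\,(x/r,y/r)$ is in general discontinuous on $\Sigma$ (e.g.\ for $f(r)=r$). The good regularity of $H_f$ comes from the combination $f\dot f/r$, which appears only after the maximum condition has produced $u_3^*=f(r)\lambda_3$.
\item The smoothing alternative stops at its crucial point. Minimizers for $f_\epsilon$ need not converge to the given $\gamma$ unless you penalize, e.g.\ by adding $\tfrac12\|u-u^*\|_{L^2}^2$ to the cost.
\end{itemize}

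\textbf{The missing idea.} It is elementary. Let $S=\{t:\gamma(t)\in\Sigma\}$.
\begin{itemize}
\item On $S$ one has $x=y=0$, hence $\dot x=\dot y=0$ a.e.\ on $S$. Since also $Z_f=0$ on $\Sigma$, the minimal control vanishes a.e.\ on $S$. Because $\lvert u^*\rvert=c>0$ a.e., $S$ is Lebesgue-null.
\item Because $X,Y$ are constant, the horizontal part of the endpoint map is affine in $(u_1,u_2)$. The only nonlinearity is $z(T)=z_0+\int_0^T u_3\,f(r(t))\,dt$.
\item Hence the coefficient $u_3^*\dot f(r)(x/r,y/r)$ of the variational equation is defined for a.e.\ $t$. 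Dominated convergence, using the local Lipschitz bound on $f$ you already assume, then gives the first-order expansion of needle variations, and in fact strict differentiability of the endpoint map at $u^*$.
\end{itemize}
With this, your argument runs with ordinary linearization, and in the normal case the adjoint equation equals $\dot\lambda=-\partial_qH_f$ for a.e.\ $t$.

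\textbf{Comparison with the paper.} The paper uses neither needles nor separation. It performs a first-variation computation with $\mathcal{H}(q,\lambda,u;b_0)$ and localizes $v$ to obtain $\partial_u\mathcal{H}=0$ a.e. It too writes $\partial_qF$ along $\gamma$ without addressing $\Sigma$. Its assertion that $\delta J(u^*,v)=0$ for all $v$ also disregards the endpoint constraint. Your separation step is therefore the sounder way to produce $(b_0,\lambda_T)$ and the normal/abnormal dichotomy. That said, once strict differentiability is established, the Lagrange multiplier rule for the endpoint map on $L^2$ would make the paper's lighter route rigorous too.
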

\begin{proof}
Fix $T>0$. Define the energy functional $J:L^2([0,T],\mathbb{R}^3)\rightarrow [0,\infty]$ by $J(u)=\frac{1}{2}\lvert\lvert u\rvert\rvert_{L^2}^2$. Note that $u\mapsto J(u)$ is Frech\'{e}t differentiable and convex. Define $F:\mathbb{R}^3\times L^2([0,T],\mathbb{R}^3)\rightarrow \mathbb{R}^3$ by $F(q,u)=(u_1,u_2,f(r)u_3)$. Fix $q_0,q_1\in \mathbb{R}^3$. Consider the control system 
\begin{align}\label{control system}
\dot{\gamma}=&F(\gamma,u)\\
\notag\gamma(0)=&\,\,q_0,\,\,\gamma(T)=q_1.
\end{align}
The regularity assumptions on $f$ ensure that $u\mapsto \gamma(\cdot\,; u)$ is well defined, taking values in the set of absolutely continuous curves on $[0,T]$, even if the trajectory $\gamma$ passes through the singular set $\Sigma$. Fix $b_0\leq 0$, which we will later fix to either be identically 0 or $-1$. Consider now the real valued functional
\begin{align}
    \mathcal{H}(q,\lambda,u; b_0)=&\,\langle \lambda,F(q,u)\rangle +b_0\frac{1}{2}\lvert u\rvert^2\\
    \notag=&\,\lambda_1 u_1+\lambda_2u_2+\lambda_3f(r)u_3+b_0\frac{1}{2}(u_1^2+u_2^2+u_3^2)
\end{align}
 Fix $v\in L^2([0,T],\mathbb{R}^3)$ and $\varepsilon>0$. We define the \emph{variation} of $u$ in the direction $v$ as
\begin{align}
u^\varepsilon(t)=\,u(t)+\varepsilon v(t).
\end{align} Note that $u^\varepsilon\in L^2([0,T],\mathbb{R}^3)$. Finally, let 
\begin{align}
    (\delta J)(u,v) :=\lim_{\varepsilon\to 0^+}\frac{J(u^\varepsilon)-J(u)}{\varepsilon}=\langle u,v\rangle_{L^2}.
\end{align}
be the \emph{first variation of the functional $J$ with respect to $v$}. The corresponding \emph{variation of the trajectory $\gamma$ with respect to $v$} is an absolutely continuous curve $\delta\gamma:[0,T]\rightarrow \mathbb{R}^3$, that satisfies the system
\begin{align}
    \dot{(\delta\gamma)}(t)=\partial_uF(\gamma(t),u(t))v(t)+\partial_qF(\gamma(t),u(t))\delta\gamma(t),\qquad \delta\gamma(0)=0.
\end{align}
We also introduce the \emph{adjoint covector equation}
\begin{align}\label{adjoint covector equation}
    \dot{\lambda}(t)=&\left(\frac{\partial F}{\partial q}(\gamma(t),u(t))\right)^T\lambda(t)\\
    \notag\lambda(0)=&\lambda_1 
\end{align}
For each $u\in L^2([0,T],\mathbb{R}^3)$ and $\lambda_1\in \mathbb{R}^3$, \eqref{adjoint covector equation} has a well defined absolutely continuous solution $\lambda(t;u,\lambda_1)$.
Notice from our definition of $\lambda(t; u,\lambda_1)$ that
\begin{align}
    \frac{d}{dt}\langle \lambda(t),\delta\gamma(t)\rangle=&\langle \dot{\lambda}(t),(\delta\gamma)(t)\rangle +\langle \lambda(t),\dot{(\delta\gamma)}(t)\rangle\\
    \notag=& \langle \lambda(t),(\partial_uF)v(t)\rangle.
\end{align}
Integrating by parts, we obtain
\begin{align*}
    \int_0^T \langle \lambda(t),(\partial_uF)(\gamma(t),u(t))v(t)\rangle\,dt=\langle \lambda(T),(\delta\gamma)(T)\rangle -\langle \lambda(0),(\delta\gamma)(0)\rangle=\langle \lambda(T),(\delta\gamma)(T)\rangle. 
\end{align*}
Now suppose that $u=u^*$ is a minimizer of $J(u)$ and $\gamma^*=\gamma(\cdot; u^*)$ is the corresponding length minimizing trajectory. Alternatively, by the standard correspondence of minimizing the length functional $\ell(\gamma)$ versus minimizing the energy functional $J(u)$, we may start with a length minimizing trajectory (as we do in the theorem statement) and produce a minimizer of $J(u)$. From standard control theory, it follows that for all $v\in L^2([0,T],\mathbb{R}^3)$, $(\delta J)(u^*,v)=0$. Let $\lambda^*=\lambda(\cdot; u^*,\lambda_1)$. It holds that
\begin{align*}
    \delta J(u^*,v)=\int_0^T\langle \partial_u \mathcal{H}(\gamma^*,\lambda^*,u^*),v\rangle\,dt +\langle \lambda(T),(\delta\gamma)(T)\rangle=0.
\end{align*}
Fix a direction $v_0\in \mathbb{S}^2$  and $\xi>0$. Fix $t_0\in [0,T)$. Choose $v=v_0\mathbf{1}_{[t_0,t_0+\xi]}$.  For $\xi>0$ small enough, $(\delta\gamma)(T)=o(\xi)$. Then, 
\begin{align}
    \frac{1}{\xi}\int_{t_0}^{t_0+\xi}\langle \partial_u \mathcal{H}(\gamma^*,\lambda^*,u^*),v_0\rangle\,dt+O(\xi)= 0.
\end{align}
Letting $\xi\to 0$, for almost every $t_0\in (0,T)$, it holds that
\begin{align}
    \langle \partial_u \mathcal{H}(\gamma^*(t_0),\lambda^*(t_0),u^*(t_0)),v_0\rangle= 0.
\end{align}
Since $v_0\in \mathbf{S}^2$ and $t_0\in [0,T)$ were arbitrary, it holds that
\begin{align}\label{stationary condition}
\partial_u \mathcal{H}(\gamma^*(t),\lambda^*(t),u^*(t))=0\,\qquad \text{a.e}\,\,t\in [0,T]
\end{align}
From the equation \eqref{stationary condition}, we obtain that 
\begin{align}
    \lambda_j^*=&-b_0 u_j^*,\qquad j=1,2\\
    f(r^*)\lambda_3^*=&-b_0u_3^*.
\end{align}
Since $\lambda$ is absolutely continuous, if $b_0<0$, then $u$ is also absolutely continuous. If $b_0=0$, then $(\lambda^*,\gamma^*)\in H^{-1}(0)$. If $b_0<0$, then 
\begin{align}
    u_j^*=&\frac{-\lambda_j}{b_0},\qquad j=1,2\\
    u_3^*=&\frac{-f(r^*)\lambda_3}{b_0}
\end{align}
Normalize so that $b_0=-1$. In this case, we obtain that
\begin{align}
\mathcal{H}_f(\gamma^*,\lambda^*,b_0,u^*)=H_f(\gamma^*,\lambda^*)
\end{align}
and the conditions $\dot{\lambda}=-(\partial_q F(\gamma^*,u^*))^T\lambda$, $\dot{\gamma}=F(\gamma^*,u^*)$ reduce to $(\dot{\gamma^*},\dot{\lambda^*})=\vec{H}_f$.
\end{proof}
In the length space structure we are considering, it is relatively easy to rule out abnormal minimizers, which correspond to $b_0=0$ in the above proof. We then state once and for all the following.
\begin{theorem}\label{ideal}
    The length space $(\mathbb{R}^3,d_{CC})$ is ideal, meaning that there are no abnormal length minimizers, and all normal trajectories are geodesics.
\end{theorem}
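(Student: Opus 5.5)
The proof has two parts: ruling out abnormal minimizers, and showing that projections of integral curves of $\vec{H}_f$ are locally length minimizing.

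\textbf{No abnormal minimizers.} Suppose $\gamma$ is a nonconstant length minimizer with an abnormal lift, i.e.\ $b_0=0$ in the proof of the Pontryagin Maximum Principle. The stationarity condition \eqref{stationary condition} then gives $\lambda_1^*=\lambda_2^*=0$ and $f(r^*)\lambda_3^*=0$ almost everywhere. The adjoint equation \eqref{adjoint covector equation} reads
\[\dot\lambda_1=-\lambda_3 u_3\,\partial_x f(r),\qquad \dot\lambda_2=-\lambda_3 u_3\,\partial_y f(r),\qquad \dot\lambda_3=0,\]
since $F$ does not depend on $z$. Hence $\lambda_3\equiv w$ is constant. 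The covector in the PMP is nontrivial, and $\lambda_1=\lambda_2=0$, so $w\neq 0$. Therefore $f(r(t))=0$ almost everywhere, and by continuity and (\ref{A1}) we get $r\equiv 0$. So $\gamma$ lies in $\Sigma$, where $Z_f=0$ and $X,Y$ are transverse to $\Sigma$. Every admissible curve contained in $\Sigma$ therefore has $\dot\gamma=0$ almost everywhere and is constant, which is a contradiction. The one point to check is that the PMP as proved above gives a nonzero pair $(b_0,\lambda)$. I would add this to the argument: if $b_0=0$ then $\lambda(T)\neq 0$, which is the usual Lagrange multiplier alternative for the endpoint map.

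\textbf{Normal trajectories are geodesics.} Let $(\gamma,\lambda)$ be an integral curve of $\vec{H}_f$ on $H_f^{-1}(1/2)$, and fix $t_0$. If $\gamma(t_0)\notin\Sigma$, then near $\gamma(t_0)$ the structure is Riemannian with metric $dx^2+dy^2+f(r)^{-2}dz^2$, which is $C^2$ by (\ref{A2}). The normal trajectories there are exactly the Riemannian geodesics, and these are locally minimizing by standard Riemannian theory. One has to take care that competitors leaving a small neighbourhood are longer, but this is the usual normal-neighbourhood argument: any curve that exits a small Euclidean ball has CC-length bounded below, because $f$ is bounded on compacts, so CC-length dominates a multiple of Euclidean length.

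If $\gamma(t_0)\in\Sigma$, I would use a calibration argument. This is the hardest step, because $\vec H_f$ is only Lipschitz there and Riemannian tools fail. The plan is to build, near $\gamma(t_0)$, a local family of normal extremals starting from a Lagrangian submanifold transverse to $\gamma$, namely covectors $(u,v,w)$ with $w$ fixed equal to the value carried by $\gamma$. Their projections foliate a neighbourhood for small times, and the resulting $1$-form $\theta$ restricted to this family is closed; this is the invariant integral of Poincaré--Cartan type. The standard estimate $\langle\lambda,\dot\eta\rangle\le \sqrt{2H_f}\,\|u_\eta\|$, applied to any competing admissible curve $\eta$, then shows $\ell(\eta)\ge\ell(\gamma)$.

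Because $w$ is conserved and the dynamics split, I would also try a more explicit route exploiting the rotational symmetry. The conserved quantities $w$ and the angular momentum $xv-yu$ reduce the flow to a one-degree-of-freedom radial system with effective potential $w^2f(r)^2$. In that setting, local minimality through $\Sigma$ should follow by comparing with the planar $\alpha$-Grushin-type case treated in \cite[Chapter 13]{AgrachevBarilariBoscainBook2020}, where the analogous result is standard. I expect verifying that the foliation is a genuine local diffeomorphism despite the limited regularity at $\Sigma$ to be the main obstacle. It should be handled by Lipschitz dependence of the flow on initial data together with an explicit computation of the short-time Jacobian.
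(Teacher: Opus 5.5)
Your exclusion of abnormal minimizers is correct and is the paper's argument, done more carefully. The paper never checks that $(b_0,\lambda)\neq 0$, and you use $\dot\lambda_3=0$ to force $\gamma\subset\Sigma$. Your off-$\Sigma$ normal case also matches the paper's appeal to \cite[Theorem 4.65]{AgrachevBarilariBoscainBook2020}.

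The gap is the case $\gamma(t_0)\in\Sigma$. There you give a plan whose decisive step is the one you flag yourself. The Poincar\'e--Cartan argument needs the flowed graph $e^{t\vec H_f}(\mathcal{L}_0)$ to be a $C^1$ Lagrangian submanifold projecting diffeomorphically onto a tube around $\gamma$, so that $\theta-H_f\,dt$ ($\theta$ the Liouville form) is closed on it. The definition of $\mathfrak{F}$ only makes $f\dot f/r$ continuous at $r=0$. So for general $f$, $H_f$ is merely $C^1$ across $\Sigma^*$, and the flow is not known to be $C^1$ in its initial data. Lipschitz dependence plus a short-time Jacobian gives at most a bi-Lipschitz homeomorphism, not closedness of the invariant integral, so the calibration inequality is not established. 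For $f(r)=r$ the Hamiltonian is smooth and your route reduces to the cited theorem; the difficulty is general $f$. Your second route does not close the gap either. The planar Grushin result rests on a smooth Hamiltonian, and reducing to the radial one-degree-of-freedom system only compares against curves in one plane. Minimality has to hold against all admissible curves in $\mathbb{R}^3$.

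The missing idea is that no local field of extremals at $\Sigma$ is needed: the paper uses the explicit synthesis of Theorem \ref{singular point optimal synth}.
\begin{itemize}
\item From $q\in\Sigma$ every covector has $K=0$, so every extremal lies in some plane $\Pi_{\theta_0}$.
\item Lemma \ref{technical}, with the implicit-differentiation identities, makes the two branches $z_1,z_2$ strictly monotone in $w_0$. Hypothesis (\ref{A4}) gives $z_2\to+\infty$ as $w_0\to 0$. Hence each point of the open half plane is reached by exactly one unit-speed extremal before its time $T(w_0)$.
\item Minimizers are normal extremals (your first part) and cannot run past $T$ (Lemma \ref{not minimizing past}). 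So that unique extremal is the minimizer, and every extremal from $\Sigma$ minimizes on $[0,T(w_0)]$ with $T(w_0)>0$. Out-of-plane competitors are included automatically.
\item The paper's definition of geodesic only asks the forward pieces $\gamma|_{[t,t+\varepsilon]}$ to minimize. For a normal trajectory reaching $\Sigma$ at $t_\Sigma$, take $\varepsilon<t_\Sigma-t$ when $t<t_\Sigma$. At $t=t_\Sigma$, restart the flow at $\gamma(t_\Sigma)\in\Sigma$ using uniqueness of solutions.
\end{itemize}
So you never need two-sided minimality through $\Sigma$.
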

\begin{proof}
    The only extremal $(\gamma(t),\lambda(t))$ satisfying $H_f(\lambda(t),\gamma(t))=0$ is such that $f(r(t))\lambda_3(t)=0$ and $(\lambda_1(t),\lambda_2(t))=0$. By continuity, either $\lambda\equiv 0$, or there is an interval $[a,b]\subset [0,T]$ such that $f(r(t))=0$ for all $t\in [a,b]$. If $\lambda\equiv 0$, then the curve $(\gamma,\lambda)$ satisfies \eqref{Normal Condition} and $\gamma$ is identically the stationary trajectory $\gamma(t)=q_0$, which is not minimizing. In the latter case, $\gamma(t)$ takes values in $\Sigma$ on $[a,b]$ and is therefore not admissible.

Although Theorem 4.65 of \cite{AgrachevBarilariBoscainBook2020} is stated under the assumption that the Hamiltonian is smooth on the entire cotangent bundle, the argument remains valid if the Hamiltonian is just $C^2$. We may extend further to our setting. 

Moreover, as we showed in the previous paragraph, any nonstationary normal trajectory cannot evolve entirely inside $\Sigma$. Since the argumentation of local optimality in \cite[Theorem 4.65]{AgrachevBarilariBoscainBook2020} is local in time and relies only on the fact that the Hamiltonian is $C^2$ along the extremal under consideration, it applies without modification to our setting for all extremals $(\gamma(t),\lambda(t))$ such that $\gamma$ does not hit $\Sigma$. 

For normal trajectories those that start on $\Sigma$, we will later show from scratch that normal trajectories starting on $\Sigma$ are geodesics with Theorem \ref{singular point optimal synth} by explicitly constructing their (evidently positive) cut times. 

If $\gamma$ is a normal trajectory starting off $\Sigma$ and is such that for some $t_{\Sigma}$, it holds that $q_1=\gamma(t_{\Sigma})\in \Sigma$, by the uniqueness of solutions to the Hamiltonian flow, its trajectory on $[t_{\Sigma},t_{\Sigma}+T]$ is identical up to reparametrization to some normal trajectory $\tilde{\gamma}:[0,T]\rightarrow \mathbb{R}^3$ where $\tilde{\gamma}(0)=q_1$, which moves us back to the setting of the previous paragraph. 
\end{proof}

\subsection{Hamiltonian Equations}
In the following section, with the understanding that we will eventually provide the details necessary to prove Theorem \ref{ideal}, we will refer to any normal trajectory $\gamma$ with a lift $(\gamma,\lambda)$ satisfying \eqref{Normal Condition} as a (normal) geodesic. 

The Hamiltonian equations obtained from \eqref{Normal Condition} are of the form 
\begin{align}\label{Hamiltonian System}
    \dot{x}=&u\\
    \notag\dot{y}=&v\\
    \notag\dot{z}=&f(r)^2 w_0\\
    \notag\dot{u}=&- \,w_0^2 f(r)\dot{f}(r) \,\frac{x}{r}\\\,
    \notag\dot{v}=&-\,w_0^2 f(r)\dot{f}(r) \,\frac{y}{r},
\end{align}
 where $w\equiv w_0$. The spatial trajectories $\gamma=(x,y,z)$ are normal geodesics. Observe that since $f(r)\dot{f}(r)/r$ extends to be continuous at $r=0$ the system admits a $C^1$ solution $(q(t),\lambda(t))$, which depends on its initial condition $(q_0,\lambda_0)\in T^*_{q_0}\mathbb{R}^3$ in a $C^1$ fashion, which is enough for variational equations such as \eqref{variational system for rhow0} to be well posed. The value $\lambda_0=\lambda(0)\in T^*_{q_0}\mathbb{R}^3$ is called the \emph{initial covector of $\gamma$}.
 
 First, observe that a scaled version of the \emph{angular momentum} $K=xv-yu$ is a constant of motion. Indeed,
\begin{align}
    \frac{d}{dt}(xv-yu)=\dot{x}v+\dot{v}x-\dot{y}u-\dot{u}y= \dot{v}x-\dot{u}y=0
\end{align}
Since the dynamics are rotationally symmetric and the conserved quantity $K$
controls the angular motion, it is natural to rewrite the system in cylindrical
coordinates $(r,\theta,z)$. As such, we obtain the useful identity
\begin{align}\label{K relation}\dot{\theta}=\frac{K}{r^2}.\end{align}
Writing $\ddot{x}=\dot{u}$, $\ddot{y}=\dot{v}$ and expanding $\ddot{x},\ddot{y}$ via cylindrical coordinates, we have
\begin{align*}-w_0^2 f(r)\dot{f}(r)\cos^2\theta=&\ddot{x}\cos(\theta)=\ddot{r}\cos^2\theta-2\dot{\theta}\dot{r}\sin\theta\cos\theta-\dot{\theta}^2\cos^2\theta r-\ddot{\theta}r\sin\theta\cos\theta\\
-w_0^2 f(r)\dot{f}(r)\sin^2\theta=&\ddot{y}\sin\theta=\ddot{r}\sin^2\theta+2\dot{\theta}\dot{r}\sin\theta\cos\theta-\dot{\theta}^2\sin^2\theta r+\ddot{\theta}r\sin\theta\cos\theta\end{align*}
\begin{align}\label{Radial ODE}
    \implies -w_0^2 f(r)\dot{f}(r)=\ddot{r}-\dot{\theta}^2r.
\end{align}
Then, using \eqref{K relation}, we obtain the second order ODE.
\begin{align}
   \ddot{r}=-w_0^2f(r)\dot{f}(r)+\frac{K^2}{r^3}. 
\end{align}
Equation \eqref{Radial ODE} exhibits two qualitatively distinct regimes. When $K=0$, which is inclusive of all geodesics starting from singular points, and certain geodesics starting from Riemannian points, the motion is strictly radial and exhibits 2D-Grushin style dynamics. Note that in this case Riemannian geodesics may cross the singular set
$\Sigma=\{r=0\}$. When $K\neq 0$, the centrifugal term $K^2/r^3$ prevents collision with $\Sigma$,
and the radial motion becomes oscillatory. These two cases will be analyzed separately in the sequel. See Figure \ref{fig:3 regimes}.

\subsection{Initial Riemannian Points}
At a Riemannian point with $(x_0,y_0)\neq 0$, $K=0$ is a vertical hyperplane in the cotangent space. All geodesics with initial covector satisfying $K=0$ will remain in the vertical plane tilted to some angle $\theta_0$. The coordinates $(\rho,z)$ are the natural choice in this plane, where $\rho$ stands in for $r$, but is allowed to take negative values on the $\{\theta=-\theta_0\}$ portion of the plane. As such, we form the odd extension of $f$ and denote it by $g$, whose square $g^2$ extends to be $C^2(\mathbb{R})$ by construction.

In this way, analysis of geodesics reduces to that of the 2D Grushin style space with Hamiltonian system 
\begin{align}\label{Reduced Hamiltonian System}
    \ddot{\rho}=&-w_0^2g(\rho)\dot{g}(\rho)\\
    \notag\dot{z}=&\,w_0g(\rho)^2\\
    \notag\dot{\rho}(0)=\,&\frac{x_0u_0+y_0v_0}{\rho(0)}. 
\end{align}

\subsection{Initial Singular Points}
For $q_0\in \Sigma$, $K=0$ for all initial co-vectors $\lambda_0\in T^*_{q_0}\mathbb{R}^3$, and we reduce to a system similar to \eqref{Reduced Hamiltonian System}. Since $(x_0,y_0)=0$, the $\dot{\rho}(0)$ term needs to be treated differently. Let $\theta_0=\operatorname{Arg}(u_0+iv_0)$. Then since $K=0$ is a constant of motion, at any time $t>0$ such that $r(t)\neq 0$ the ratio of $u(t)$ to $v(t)$ (or vice versa) is the same as the ratio of $x(t)$ to $y(t)$. The latter determines the angle $\theta(t)$. As such, the planar motion satisfies the equation \begin{align*}(x(t),y(t))=\rho(t)(\cos(\theta_0),\sin(\theta_0)).\end{align*}
The system for $(\rho,z)$, which again are coordinates in the plane containing $\{\theta=\theta_0\}$ is given by
\begin{align}\label{Singular Reduced System}
 \ddot{\rho}=&-w_0^2g(\rho)\dot{g}(\rho)\\   
 \notag\dot{z}=&\, w_0g(\rho)^2\\
 \notag\dot{\rho}(0)=&\sqrt{u_0^2+v_0^2}=:\sqrt{2E}\\
 \notag \rho(0)=&\,0
\end{align}
In the following we will suppose that $E>0$. If $E=0$, then the only solution is the stationary trajectory $\rho\equiv 0$. By virtue of $f$ being strictly monotone, the trajectory of $\rho(t)$ is periodic for any $w_0\neq 0$, oscillating between two extremes $\pm\rho^*(w_0,E)$, which are found via turning point analysis in the following way. The second–order equation for $\rho$ admits the first integral
\begin{equation}\label{energy identity}
\dot{\rho}^2 + w_0^2 g(\rho)^2 = 2E,
\end{equation}
which may be interpreted as the conservation of energy for a one–dimensional
particle moving in the effective potential $V(\rho)=\tfrac12 w_0^2 g(\rho)^2$. As such, the turning points of $\rho$ satisfy $g(\rho)^2=\frac{2E}{w_0^2}$. Since $g$ is an odd extension of $f$, with $f$ strictly increasing and unbounded, the equation $g(\rho)^2=\frac{2E}{w_0^2}$ admits exactly two solutions
$\rho=\pm\rho^*(w_0,E)$. Thus $\rho(t)$ oscillates periodically between these turning points whenever
$w_0\neq 0$. By standard ODE theory, the period of $\rho$ is then $2T$, where 
\begin{align}\label{Period Formula}
    T=T(w_0,E)=2\int_0^{\rho^*}\frac{dr}{\sqrt{2E-w_0^2f(r)^2}}.
\end{align}
Since \eqref{Singular Reduced System} is autonomous, the reflected function $\rho(T/2-t)$ satisfies the same ODE and initial conditions as $\rho(T/2+t)$. As such $\rho(t)$ is symmetric about $T/2$.

When $f(r)=r^\alpha$ and $2E=1$, then \eqref{Period Formula} reduces to $T=\frac{\pi_\alpha}{\lvert w_0\rvert^{1/\alpha}}$, where $\pi_\alpha=2\int_0^1\frac{dt}{\sqrt{1-t^{2\alpha}}}=B(1,1/\alpha)$, and $B$ is the complete beta function. These have been studied in the papers \cite{albert2025geodesicsgrushinspaces,Borza2022,MagnaboscoRossi2023}, and are crucial to the theory of generalized trigonometric functions, which feature very strongly in the analysis of $\alpha$-Grushin spaces. For a deeper treatment of their properties and for extensions of the definition of generalized trigonometric functions, see the papers \cite{KobayashiTakeuchi2019,Lokutsievskii2019}.
\begin{figure}
    \centering
    \includegraphics[width=0.5\linewidth]{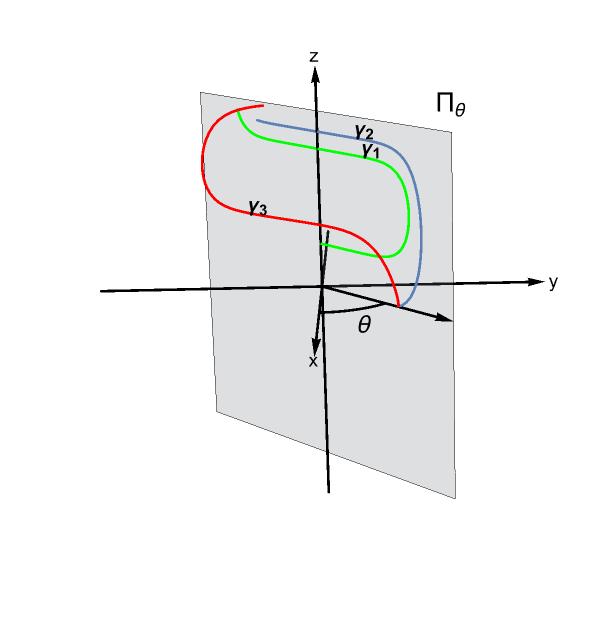}
    \caption{Three regimes of geodesics: $\gamma_1$ is a geodesic starting from a singular point. $\gamma_2$ is a geodesic starting from a Riemannian point but with $K=0$, which remains in the plane $\Pi_{\theta}$. $\gamma_3$ is a geodesic starting from a Riemannian point with $K\neq 0$, which leaves the plane $\Pi_\theta$ and then returns on the opposite side of $\Sigma$ at the time $T$ given in Theorem \ref{Geodesic and Cut time Theorem}.}
    \label{fig:3 regimes}
\end{figure}
\section{Optimal Synthesis}
We state the definition of cut time and conjugate time, which will be the focus of the rest of the paper. 
\begin{definition}
    Let $\gamma:[0,T]\rightarrow \mathbb{R}^3$ be a geodesic in $(\mathbb{R}^3,d_{CC})$. Put $q_0=\gamma(0)$. Define the \emph{cut time} of $\gamma$ as
    \begin{align}
        t_{\operatorname{cut}}(\gamma)=\,\sup\{t>0: \gamma\rvert_{[0,t]}\,\,\text{is a length minimizer}\}.
    \end{align}
    The point $\gamma(t_{\operatorname{cut}}(\gamma))$ is called a \emph{cut point} of $q_0$ and the collection of all cut points is called the \emph{cut locus} of $q_0$, denoted by $\operatorname{Cut}(q_0)$. Define the \emph{exponential} function $\operatorname{Exp}_{q_0}:T^*_{q_0}\mathbb{R}^3\rightarrow\mathbb{R}^3$ by 
    \begin{align}
        \operatorname{Exp}_{q_0}(\lambda_0)=\gamma(1; \lambda_0),
    \end{align}
    where $\gamma$ is the solution of \eqref{Hamiltonian System} corresponding to the initial covector $\lambda_0\in T^*_{q_0}\mathbb{R}^3$. Define the conjugate time
    \begin{align}
        t_{\operatorname{con}}(\gamma)=\inf\{t>0:\,\operatorname{Exp}_{q_0}(\cdot)\,\,\text{has a critical point at}\,\,t\lambda_0\}.
    \end{align}
    The point $\gamma(t_{\operatorname{con}})$ is called \emph{conjugate} to $q_0$ and the collection of all conjugate points is called the conjugate locus, denoted $\operatorname{Con}(q_0)$.
\end{definition}
For geodesics of constant speed, the easiest way to get an initial upper bound on the cut time $t_{\operatorname{cut}}(\gamma)$ is to demonstrate the existence of a \emph{symmetrizing geodesic} $\hat{\gamma}$ with the same constant speed as $\gamma$ and which satisfies $\gamma(t_*)=\hat{\gamma}(t_*)$ for some $t_*>0$. It follows that $\min\{t_{\operatorname{cut}}(\gamma),t_{\operatorname{cut}}(\hat{\gamma})\}\leq t_*$. The crucial point here is that we may have strict inequality, as we can not rule out the existence of a third geodesic with the same speed that intersects $\gamma$ at an even earlier time. 

For initial Riemannian points, often the best strategy is to employ an Extended Hadamard technique \cite{AgrachevBarilariBoscainBook2020}. The proof as written in Agrachev et. al is stated for pure sub-Riemannian structures, but can be extended to our setting without issue, as it simply goes through the covering map theory for $C^1$ functions between manifolds as applied to the exponential $\operatorname{Exp}_{q_0}$.
\begin{theorem}\label{extended Hadamard}[Extended Hadamard Technique for Riemannian Points]
Let $q_0\in \mathbb{R}^3$ be a Riemannian point. Let $c>0$, and $t_*:E_c\rightarrow (0,\infty]$ be a function on the energy shell $E_c=\{\lambda_0\in T^*_{q_0}: H_f(q_0,\lambda_0)=c^2/2\}$ such that all geodesics $\gamma(\cdot ; \lambda_0)$ are not minimizing past $t_*(\lambda_0)$. Let $\operatorname{Cut}^*(q_0)=\{\operatorname{Exp}_{q_0}(t_*(\lambda_0)\lambda_0): \lambda_0\in E_c, t_*(\lambda_0)<+\infty\}$. Let $N=\{t\lambda\in T^*_{q_0}\mathbb{R}^3: \lambda\in E_c, t<t_*(\lambda)\}$ be the so-called conjectured injectivity domain. Suppose that the following hold:
\begin{enumerate}
    \item $\operatorname{Exp}_{q_0}\rvert_N$ is a proper map;
    \vspace{.5cm}
    \item $t_*(q_0)\leq t_{\operatorname{con}}$;
    \vspace{.5cm}
    \item $\operatorname{Exp}_{q_0}(N)=\mathbb{R}^3\setminus \operatorname{Cut}^*(q_0)$;
    \vspace{.5cm}
    \item $\mathbb{R}^3\setminus \operatorname{Cut}^*(q_0)$ is simply connected;
\end{enumerate}
    Then $t_*=t_{\operatorname{cut}}$ is the true cut time. 
\end{theorem}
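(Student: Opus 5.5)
The plan is to run the covering-space argument from the proof of the extended Hadamard technique in \cite{AgrachevBarilariBoscainBook2020}, checking at each step that only $C^1$ regularity of $\operatorname{Exp}_{q_0}$ on $N$ is needed. Since $q_0$ is Riemannian, $H_f$ is $C^2$ near $q_0$, and by the discussion after \eqref{Hamiltonian System} the flow depends $C^1$ on $\lambda_0$ even along extremals that cross $\Sigma$. So $\operatorname{Exp}_{q_0}$ is a $C^1$ map from the open set $N\subset T^*_{q_0}\mathbb{R}^3\cong\mathbb{R}^3$ into $\mathbb{R}^3$.

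\textbf{Step 1: local diffeomorphism.} For $t\lambda\in N$ with $t>0$ we have $t<t_*(\lambda)\le t_{\operatorname{con}}$ by hypothesis (2), so $d\operatorname{Exp}_{q_0}$ is nonsingular there. Near the origin, $\operatorname{Exp}_{q_0}$ agrees with the Riemannian exponential of a metric that is smooth near $q_0$, so it is a local diffeomorphism at $0$. The inverse function theorem for $C^1$ maps then shows that $\operatorname{Exp}_{q_0}|_N$ is a local $C^1$ diffeomorphism.

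\textbf{Step 2: covering map.} Combined with properness (hypothesis (1)) and surjectivity onto $M:=\mathbb{R}^3\setminus\operatorname{Cut}^*(q_0)$ (hypothesis (3)), the standard lemma says that a proper local homeomorphism between connected, locally compact Hausdorff manifolds is a finite covering map. Here $N$ is star-shaped with respect to $0$, hence connected.

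\textbf{Step 3: injectivity.} Since $M$ is simply connected (hypothesis (4)) and $N$ is connected, the covering is one-sheeted. So $\operatorname{Exp}_{q_0}:N\to M$ is a homeomorphism.

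\textbf{Step 4: optimality.} Fix $\lambda\in E_c$ and $t<t_*(\lambda)$. Completeness of $d_{CC}$ (Theorem \ref{metric}) gives a minimizer from $q_0$ to $q=\operatorname{Exp}_{q_0}(t\lambda)$. By Theorem \ref{ideal} this minimizer is a normal geodesic $\gamma(\cdot;\mu)$ with $\mu\in E_c$, reaching $q$ at time $s=d_{CC}(q_0,q)/c$. Minimality of $\gamma(\cdot;\mu)$ up to time $s$ forces $s\le t_*(\mu)$, since by assumption geodesics are not minimizing past $t_*$.

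\begin{itemize}
\item If $s<t_*(\mu)$, then $s\mu\in N$. Injectivity gives $s\mu=t\lambda$, so $\gamma(\cdot;\lambda)$ is minimizing on $[0,t]$.
\item If $s=t_*(\mu)$, then $q\in\operatorname{Cut}^*(q_0)$. This contradicts $q\in M=\operatorname{Exp}_{q_0}(N)$.
\end{itemize}

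Hence $t_{\operatorname{cut}}(\lambda)\ge t_*(\lambda)$, and the reverse inequality holds by hypothesis, so $t_*=t_{\operatorname{cut}}$.

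\textbf{Main obstacle.} The delicate point is regularity in Step 1 and in the properness argument along geodesics that cross $\Sigma$. There the Hamiltonian field is only locally Lipschitz and $C^1$ off $T^*\Sigma$. I would first verify that the variational equation is well posed along such extremals, using the continuity of $f\dot f/r$ at $0$. This gives $C^1$ dependence on initial data, which is all the covering-space argument requires. I would also confirm that the minimizer in Step 4 is normal, which is exactly what Theorem \ref{ideal} provides.
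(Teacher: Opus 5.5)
Your argument is the standard covering-map proof of \cite{AgrachevBarilariBoscainBook2020}, which the paper invokes without writing out when it says the extension ``goes through the covering map theory for $C^1$ functions.'' You get nonsingularity on $N$ from (2), note that a proper local homeomorphism onto the connected set $\mathbb{R}^3\setminus\operatorname{Cut}^*(q_0)$ is a covering, make it one-sheeted by simple connectivity, and deduce optimality from the existence of normal minimizers together with $\operatorname{Exp}_{q_0}(N)\cap\operatorname{Cut}^*(q_0)=\emptyset$; so it is correct and follows the same route.

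The paper also takes for granted the two facts you assume. First, $N$ is open, which needs $t_*$ lower semicontinuous; this is not among the stated hypotheses, but it holds for $t_*=\pi/\lvert w_0\rvert$. Second, the flow depends $C^1$ on initial data along extremals crossing $\Sigma$. Continuity of $f\dot f/r$ alone does not guarantee this for general $f\in\mathfrak{F}$, but it is automatic for $f(r)=r$, the only case where the theorem is applied.
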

\subsection{Optimal Synthesis at Singular Points}
We turn our attention to the optimal synthesis at singular points and claim that $T$ as written in \eqref{Period Formula} is the correct cut time for geodesics. We will avoid using the extended Hadamard technique and simply generalize the strategy that was used in \cite{AgrachevBarilariBoscainBook2020,Borza2022}. An extended Hadamard approach adapted to singular points is likely possible in this setting, as is mentioned to be the case for the usual Grushin plane in \cite{AgrachevBarilariBoscainBook2020}[Exercise 13.35], but we did not explore this possibility. 
\begin{theorem}\label{singular point optimal synth}
    Let $q_0=(0,0,z_0)\in \mathbb{R}^3$ be a singular point and $\gamma(t)=(x(t),y(t),z(t))$ be a geodesic starting at $q_0$ with initial covector $\lambda_0=(u_0,v_0,w_0)$ satisfying $w_0\neq 0$, $2E=u_0^2+v_0^2>0$. Let $\theta_0\in (-\pi,\pi]$ be given by $\theta_0=\operatorname{Arg}(u_0+iv_0)$. Then, $\gamma(t)$ takes values in the vertical plane $\Pi_{\theta_0}$ rotated to angle $\theta_0$ from the positive $x$-axis. Furthermore, if $(\rho(t),z(t))$ are the natural coordinates of $\gamma(t)$ in $\Pi_{\theta_0}$, then $\rho$ reaches a maximum at $\rho^*>0$, determined by the equation $g(\rho^*)^2=\frac{2E}{w_0^2}$, before returning to $0$ at time 
    \[T=2\int_0^{\rho^*}\frac{1}{\sqrt{2E-w_0^2f(r)^2}}\,dr.\] The geodesic $\gamma$ is minimizing until exactly $t_{\operatorname{cut}}(\gamma)=T$ where it meets infinitely many other geodesics, each of the same length $\ell(\gamma)$. Finally the cut locus is given by $\operatorname{Cut}(q_0)=\{(0,0,z): z\in \mathbb{R}\setminus \{0\}\}$.
\end{theorem}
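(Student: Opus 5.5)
The plan follows the Grushin-plane argument of \cite[Chapter 13]{AgrachevBarilariBoscainBook2020} and \cite{Borza2022}, adapted to general $f\in\mathfrak{F}$.

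\emph{Step 1: planarity, turning point and return time.} These are already contained in the reduction to \eqref{Singular Reduced System}. Since $K=0$, the planar part is $(x,y)=\rho(t)(\cos\theta_0,\sin\theta_0)$. The energy identity \eqref{energy identity} forces $\rho$ to increase until $g(\rho^*)^2=2E/w_0^2$. By the symmetry of $\rho$ about the turning time, $\rho$ first returns to $0$ at
\[
T=2\int_0^{\rho^*}\frac{dr}{\sqrt{2E-w_0^2f(r)^2}},
\]
and on $(0,T)$ it is strictly positive.

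\emph{Step 2: upper bound $t_{\operatorname{cut}}\le T$ by rotation.} The Hamiltonian system is invariant under rotations about the $z$-axis. Hence for every $\phi$, the geodesic with covector $(R_\phi(u_0,v_0),w_0)$ is the rotation $R_\phi\gamma$. It has the same length and the same $z(t)$, since $\dot z=w_0 f(\rho)^2$ depends only on $\rho$. At time $T$ every such geodesic sits at $(0,0,z(T))$, so infinitely many geodesics of equal length meet there. A standard argument then shows $\gamma$ cannot minimize past $T$: concatenating $R_\phi\gamma|_{[0,T]}$ with $\gamma|_{[T,T+\varepsilon]}$ gives a minimizer with a corner, or one that is not normal. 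Theorem \ref{ideal}, which rules out abnormals, together with uniqueness for the $C^1$ Hamiltonian flow excludes both. Note $z(T)-z_0=w_0\int_0^T f(\rho)^2\,dt\neq0$.

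\emph{Step 3: lower bound $t_{\operatorname{cut}}\ge T$.} This is the main obstacle. I would show that $\operatorname{Exp}_{q_0}$ restricted to $N=\{t\lambda_0: 0<t<T(\lambda_0)\}$ is injective, and that its image covers $\mathbb{R}^3\setminus\Sigma$. Existence of minimizers (via completeness and local compactness) together with the absence of abnormals would then give the bound. First normalize $2E=1$.
\begin{itemize}
\item Points with $r>0$ determine $\theta_0$ uniquely, because geodesics stay in half-planes before time $T$. This reduces the problem to the half-plane $\{\rho>0\}$.
\item There, using the scaling structure $(w_0,t)$, I would show that the map $(w_0,t)\mapsto(\rho,z)$ is injective on $\{0<t<T(w_0)\}$. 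The idea is to prove monotonicity of $z$ along level curves $\{\rho=\text{const}\}$, using the $C^1$ dependence on initial data. Equivalently, I would show the Jacobian $\partial(\rho,z)/\partial(t,w_0)$ does not vanish before $T$, i.e.\ $t_{\operatorname{con}}\ge T$. This is the analogue of the classical $\frac{d}{dw}$ monotonicity computation for the Grushin plane, and I expect it to be the hardest step for general $f$.
\item If monotonicity is hard in general, the fallback is a comparison argument. Suppose a minimizer $\gamma'$ reaches $\gamma(t)$ with $t<T$. Then $\gamma'$ is normal with some $w_0'$. I would compare $z$-displacements, using that $\int f(\rho)^2$ over a half-period is strictly monotone in $w_0$. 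Condition \eqref{A4} ensures $\rho^*\to\infty$ behaves correctly, so that every height is attained exactly once.
\end{itemize}

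\emph{Step 4: the cut locus.} Each point $(0,0,z)$ with $z\neq z_0$ is reached at time $T$ by some geodesic, choosing $w_0$ by the intermediate value theorem in the monotone displacement. Points with $r>0$ are reached before the cut time by Step 3. Hence $\operatorname{Cut}(q_0)=\{(0,0,z):z\neq z_0\}$, which matches the statement after the translation $z_0=0$.
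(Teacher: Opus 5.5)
Your Steps 1 and 2 agree with the paper. Your corner argument is a legitimate way to make Lemma \ref{not minimizing past} precise, since a normal minimizer has continuous horizontal velocity $(u,v)$.

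The gap is Step 3, which contains the entire lower bound $t_{\operatorname{cut}}\ge T$. It is only announced, and its one concrete claim is false. Nonvanishing of $\partial(\rho,z)/\partial(t,w_0)$ on $\{0<t<T(w_0)\}$ is not equivalent to injectivity of $(t,w_0)\mapsto(\rho,z)$. It gives only a local diffeomorphism, so you would still need a global argument: properness plus a covering argument, or an explicit monotone parametrization. Your fallback does not close the gap either. Monotonicity of the displacement over a half-period concerns only the axis points $\gamma(T)$. It does not exclude two geodesics with different $w_0$ reaching the same point $(\bar\rho,\bar z)$ with $\bar\rho>0$, one before and one after its turning time. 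Also, $\rho^*\to\infty$ as $w_0\to0$ holds for every unbounded increasing $f$. The actual role of hypothesis~(\ref{A4}) is to make the height reached on the descending branch, and $z(T)$, blow up as $w_0\to0$. Those limits, not monotonicity, are also what your intermediate value argument in Step 4 needs.

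The missing ingredient is Lemma \ref{technical}, with its two parts.
\begin{itemize}
\item \emph{Sign of $\rho_{w_0}$.} For $w_0>0$, $\dot\rho$ solves the linearized equation \eqref{eq:jac} and first vanishes at $T/2$. By Sturm separation, the solution $y$ with $y(0)=0$, $\dot y(0)=1$ is positive on $(0,T/2)$. The Wronskian $W=y\dot\rho_{w_0}-\dot y\rho_{w_0}$ satisfies $W(0)=0$ and $\dot W=-2w_0\,y\,g(\rho)\dot g(\rho)<0$. Hence $(\rho_{w_0}/y)'<0$ and $\rho_{w_0}<0$ on $(0,T/2)$. Differentiating $\rho(t)=\rho(T(w_0)-t)$ in $w_0$ gives $\rho_{w_0}(t)=\rho_{w_0}(T-t)+\dot\rho(T-t)\,T'(w_0)$, which handles $(T/2,T)$.
\item \emph{Formula for $z_{w_0}$.} Integrate $2w_0g\dot g\,\rho_{w_0}=-\frac{2}{w_0}\ddot\rho\,\rho_{w_0}$ by parts and use the $w_0$-derivative of \eqref{energy identity}. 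This gives $z_{w_0}=-\frac{1}{w_0}\dot\rho\,\rho_{w_0}$.
\end{itemize}
Now fix $\bar\rho$ and let $t_1(w_0)\le t_2(w_0)$, $w_0\in(0,w_0^*]$, be the two times at which $\rho=\bar\rho$. One gets
\[
\frac{d}{dw_0}\,z\bigl(t_j(w_0);w_0\bigr)=-\frac{2E\,\rho_{w_0}(t_j)}{w_0\,\dot\rho(t_j)},
\]
which is positive on the ascending branch and negative on the descending one. The two branches glue at $w_0^*$. As $w_0\to0$, $z_1\to z_0$, and $z_2\to+\infty$ by hypothesis~(\ref{A4}). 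So each $(\bar\rho,\bar z)$ with $\bar z>z_0$ is reached exactly once before time $T$.

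One caution if you carry this out. Differentiating $\rho(T(w_0);w_0)=0$ gives $\rho_{w_0}(T)=\sqrt{2E}\,T'(w_0)$. So the sign of $\rho_{w_0}$ on $(T/2,T)$, i.e.\ the absence of conjugate points before $T$, genuinely requires $T'(w_0)<0$. The paper asserts this without proof. With $2E=1$ and $a=1/w_0$ one computes
\[
\frac{dT}{da}=\frac{2}{a}\int_0^{\rho^*}\frac{(f/\dot f)'(r)}{\sqrt{1-f(r)^2/a^2}}\,dr .
\]
So $T'(w_0)<0$ holds when $f/\dot f$ is nondecreasing, which covers monomials and $r^\alpha\log(1+r)^\beta$. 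However, a sharp dip of $f/\dot f$ just below $\rho^*$ can reverse the sign. This condition therefore does not follow from the hypotheses defining $\mathfrak{F}$ alone, and you would have to prove it or assume it.
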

The proof requires two lemmas. The first ensures that geodesics are not minimizing past the conjectured cut time, while the second is a technical lemma on the partial derivatives of the geodesic coordinates $(\rho,z)$. In Lemma \ref{not minimizing past}, we show that $T$ is a genuine upper bound on the cut time $t_{\operatorname{cut}}$. To show the reverse inequality, we show that with $u_0^2+v_0^2=2E$ fixed, the endpoint map $(t,u_0,v_0,w_0)\mapsto (\rho(t;u_0,v_0,w_0),z(t;u_0,v_0,w_0))$ for $0<t<T(w_0)$ is a diffeomorphism onto each open half plane $\{\theta=\theta_0\}\cap \{r>0\}$. This will require strong use of the technical hypotheses placed on $f$ in the introduction, especially \ref{A4}. 
\begin{lemma}\label{not minimizing past}
    Each of the geodesics described in Theorem \ref{singular point optimal synth} are not minimizing past $t=T$.
\end{lemma}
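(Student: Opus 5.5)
The strategy is the standard symmetrizing-geodesic argument, combined with the observation that a curve whose length is achieved by two distinct minimizers cannot be extended minimally beyond the meeting point.

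\emph{Step 1: the rotated geodesics meet $\gamma$ at time $T$.} Fix $\gamma$ with initial covector $\lambda_0=(u_0,v_0,w_0)$. For every $\phi\in(-\pi,\pi]$ let $\lambda_0^\phi=(\sqrt{2E}\cos\phi,\sqrt{2E}\sin\phi,w_0)$ and let $\gamma_\phi$ be the corresponding geodesic. By the reduction \eqref{Singular Reduced System}, the pair $(\rho,z)$ solves the same ODE with the same data $\rho(0)=0$, $\dot\rho(0)=\sqrt{2E}$, $z(0)=z_0$, independently of $\phi$. Hence $\gamma_\phi(t)=(\rho(t)\cos\phi,\rho(t)\sin\phi,z(t))$. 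By the turning-point analysis, $\rho(T)=0$, so all the $\gamma_\phi$ pass through the same point $(0,0,z(T))$ at time $T$. Each has the same speed $\sqrt{2E+w_0^2g(\rho)^2}$ pointwise, hence the same length on $[0,T]$. We also check that $z(T)\neq z_0$, since $\dot z=w_0g(\rho)^2$ has constant sign and is not identically zero. The curves $\gamma_\phi$ for $\phi\neq\theta_0$ differ from $\gamma$ on $(0,T)$, because there $\rho>0$, so they lie in different half-planes.

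\emph{Step 2: a corner argument.} Suppose $\gamma|_{[0,T+\varepsilon]}$ were a length minimizer for some $\varepsilon>0$. Take $\phi\neq\theta_0$ and form the concatenation $\eta=\gamma_\phi|_{[0,T]}*\gamma|_{[T,T+\varepsilon]}$. This is admissible and has the same length, so it is also a length minimizer. Since it is parametrized by constant speed (the speed is conserved by $H_f$ and is equal for both pieces), $\eta$ is a constant-speed minimizer. By the Pontryagin Maximum Principle and Theorem~\ref{ideal} (no abnormals), $\eta$ is the projection of a normal extremal, hence of a $C^1$ solution of \eqref{Hamiltonian System}.

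The main subtlety is deriving a contradiction from this, since the point $\gamma(T)$ lies on $\Sigma$, where the flow is only $C^1$ and uniqueness must be handled with care. I would argue as follows. The control of $\eta$ is $u=(\dot x,\dot y,\dot z/f)$, so the normal lift has $(u,v)=(\dot x,\dot y)$, which must be continuous at $t=T$. Just before $T$, $(\dot x,\dot y)=\dot\rho(T)(\cos\phi,\sin\phi)$ with $\dot\rho(T)=-\sqrt{2E}\neq 0$ by the energy identity \eqref{energy identity}. Just after $T$, it equals $\dot\rho(T)(\cos\theta_0,\sin\theta_0)$. These differ when $\phi\neq\theta_0$, which is a contradiction. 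This argument avoids any uniqueness issue at $\Sigma$: it uses only the continuity of the normal covector, which the maximum principle guarantees since $\lambda$ is absolutely continuous.

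\emph{Conclusion.} Hence $\gamma$ is not minimizing on $[0,T+\varepsilon]$ for any $\varepsilon>0$, which proves the lemma.
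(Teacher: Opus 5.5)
Your proposal is correct and is essentially the paper's own argument. The paper's proof consists only of your Step~1 (with $E,w_0$ fixed, $\rho$, $T$ and $z(T)$ do not depend on the angle, so all rotated geodesics meet at $(0,0,z(T))$ at time $T$), and leaves implicit the deduction in your Step~2, which you carry out correctly via Theorem~\ref{ideal} and the continuity of $(\lambda_1,\lambda_2)=(\dot x,\dot y)$ along a normal extremal. One small slip: the common speed is $\sqrt{\dot\rho^2+w_0^2g(\rho)^2}=\sqrt{2E}$ by \eqref{energy identity}, not $\sqrt{2E+w_0^2g(\rho)^2}$ (which is not even constant), and it is this constant value that Step~2 relies on.
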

\begin{proof}
    Simply observe that with $E,w_0>0$ fixed, $\rho^*$, $\rho$ itself (up to identification by rotation), $ T=\int_0^{\rho^*}\frac{dr}{\sqrt{2E-w_0^2f(r)^2}}$ and hence $z(T,\lambda_0)=w_0\int_{0}^Tg(\rho(t))^2\,dt$ are all independent of $\theta_0$. In particular, all $z(T,\lambda_0)$ coincide, and the curves $\gamma(t; \lambda_0)$ intersect for the first time at $t=T$ at the point $(0,0,z(T))$.
\end{proof}
To make progress towards the optimal synthesis at singular points, we need to get a handle on the following variational system, arising from differentiating \eqref{Singular Reduced System} with respect to $w_0$. 
    \begin{align}\label{variational system for rhow0}
        \ddot{\rho}_{w_0}=&\,-2w_0g(\rho)\dot{g}(\rho)-w_0^2\ddot{g^2}(\rho)\rho_{w_0}\\
        \notag\dot{\rho}_{w_0}(0)=&\,0\\
        \notag\rho_{w_0}(0)=&\,0
    \end{align}
From \eqref{energy identity}, we have an energy identity for $\rho_{w_0}$, namely
\begin{align}\label{rdot}
    \dot{\rho}_{w_0}\dot{\rho}=-w_0g(\rho)^2-w_0\dot{g}(\rho)g(\rho)\rho_{w_0}.
\end{align}
Our initial task is to rule out the existence of premature zeroes of the function $\rho_{w_0}$ and to obtain a factorization for the partial derivative of the vertical coordinate $z_{w_0}$. The following proof is essentially a Sturm separation argument, invoking the standard fact that linearly independent solutions of a second order ODE must have interlacing zeroes. For a more extensive treatment, see Chapter IV of the textbook by Hartman \cite{Hartman1964ODE}.
\begin{lemma}\label{technical}
    For $q_0\in \Sigma$, $u_0^2+v_0^2=2E>0$, $w_0\neq 0$ and $(\rho(t;w_0),z(t;w_0))$ the coordinates of a geodesic in the plane $\Pi_{\theta_0}$, it holds that:
    \begin{enumerate}[label=\alph*)]
        \item For all $w_0>0$ and $0<t<T(w_0)$, (resp. $w_0<0$) $\rho_{w_0}(t; w_0)<0$ (resp. $\rho_{w_0}>0$).
        \vspace{.5cm}
        \item $z_{w_0}(t; w_0)=-\operatorname{sign}(w_0)\frac{1}{w_0}\dot{\rho}(t;w_0)\rho_{w_0}(t;w_0)$
    \end{enumerate}
\end{lemma}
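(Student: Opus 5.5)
The plan is to treat $w_0>0$ in detail and obtain $w_0<0$ by symmetry. The system \eqref{Singular Reduced System} depends on $w_0$ only through $w_0^2$, so $\rho(t;w_0)$ is even in $w_0$ and $\rho_{w_0}$ is odd. The central tool is the identity \eqref{rdot}, which I will reread as a Wronskian identity. The function $\dot\rho$ solves the homogeneous linearization $\ddot y=-w_0^2\ddot{g^2}(\rho)\,y/2$, while $\rho_{w_0}$ solves the inhomogeneous equation \eqref{variational system for rhow0}. Setting $W=\dot\rho\,\dot\rho_{w_0}-\ddot\rho\,\rho_{w_0}$, one computes $\dot W=-2w_0 g(\rho)\dot g(\rho)\dot\rho=-w_0\frac{d}{dt}g(\rho)^2$. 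Since $W(0)=0$ and $g(0)=0$, this gives $W=-w_0g(\rho)^2$, which is exactly \eqref{rdot}. Consequently, wherever $\dot\rho\neq 0$,
\[
\frac{d}{dt}\Big(\frac{\rho_{w_0}}{\dot\rho}\Big)=\frac{W}{\dot\rho^2}=-\frac{w_0\,g(\rho)^2}{\dot\rho^2}<0 \qquad (0<t<T,\ t\neq T/2).
\]

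For part b) I will just integrate. We have $z_{w_0}=\int_0^t\big(g(\rho)^2+2w_0g(\rho)\dot g(\rho)\rho_{w_0}\big)\,ds$. Using $\ddot\rho=-w_0^2g\dot g$ together with \eqref{rdot}, the integrand multiplied by $w_0$ equals $-\frac{d}{ds}(\dot\rho\,\rho_{w_0})$. Because $\rho_{w_0}(0)=0$, this yields $w_0z_{w_0}(t)=-\dot\rho(t)\rho_{w_0}(t)$. I would then match this against the stated normalization $-\operatorname{sign}(w_0)\frac{1}{w_0}$: the two agree for $w_0>0$, and for $w_0<0$ I would check the convention through the parity of $z$ in $w_0$.

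For part a) on $(0,T/2]$ the argument is elementary. Near $t=0$ we have $\rho\approx\sqrt{2E}\,t$, so $\ddot\rho_{w_0}\approx-2w_0g\dot g(\rho)<0$, and therefore $\rho_{w_0}<0$ for small $t>0$. Suppose $t_1\le T/2$ is a first zero of $\rho_{w_0}$. At $t_1$, \eqref{rdot} reduces to $\dot\rho\,\dot\rho_{w_0}=-w_0g(\rho)^2<0$.
\begin{itemize}
\item If $t_1<T/2$, then $\dot\rho>0$, which forces $\dot\rho_{w_0}(t_1)<0$. This is impossible for an upward crossing.
\item If $t_1=T/2$, the left-hand side vanishes while the right-hand side does not, again a contradiction.
\end{itemize}
In fact, at $T/2$ the identity gives $\rho_{w_0}(T/2)=-g(\rho^*)/\dot g(\rho^*)<0$.

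On $(T/2,T)$ we have $\dot\rho<0$, so the ratio $\rho_{w_0}/\dot\rho$ tends to $+\infty$ as $t\to T/2^+$ and is strictly decreasing. Hence $\rho_{w_0}<0$ on all of $(T/2,T)$ if and only if the ratio stays nonnegative up to $t=T$, i.e. if and only if $\rho_{w_0}(T)\le 0$. Differentiating $\rho(T(w_0);w_0)=0$ and using $\dot\rho(T)=-\sqrt{2E}$ gives $\rho_{w_0}(T)=\sqrt{2E}\,T'(w_0)$. So the lemma reduces to the claim $T'(w_0)\le 0$ for $w_0>0$, that is, the return time is nonincreasing in the vertical momentum.

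I expect this monotonicity to be the main obstacle. The substitution $f(r)=\frac{\sqrt{2E}}{w_0}s$ in \eqref{Period Formula} gives
\[
T=\frac{2\sqrt{2E}}{w_0}\int_0^1\frac{ds}{\dot f\big(f^{-1}(\sqrt{2E}s/w_0)\big)\sqrt{1-s^2}}.
\]
My first attempt will be to differentiate this under the integral sign and control the sign using hypotheses (A3)--(A4) and the continuity of $f\dot f/r$ at $0$. If this does not close directly, the fallback is the time-reversal symmetry $\rho(t)=\rho(T-t)$. It gives $\rho_{w_0}(t)=\rho_{w_0}(T-t)+\dot\rho(T-t)\,T'(w_0)$, which ties the behaviour on $(T/2,T)$ to the already-controlled interval $(0,T/2)$, and I would try to obtain the sign of $T'$ from that relation.
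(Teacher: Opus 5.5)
Part b) and the half-interval $(0,T/2]$ of a) are fine. Your ratio identity $\frac{d}{dt}\bigl(\rho_{w_0}/\dot\rho\bigr)=-w_0g(\rho)^2/\dot\rho^2$ is a cleaner replacement for the paper's Sturm-separation argument with the auxiliary solution $y$. One small correction: your $W=-w_0g(\rho)^2$ gives $\rho_{w_0}(T/2)=-g(\rho^*)/(w_0\dot g(\rho^*))$, not $-g(\rho^*)/\dot g(\rho^*)$. The slip comes from \eqref{rdot}, whose last term should carry $w_0^2$.

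For b), the parity check you propose goes against the stated normalization. Since $\rho$ is even and $z$ odd in $w_0$, both $z_{w_0}$ and $-\frac{1}{w_0}\dot\rho\,\rho_{w_0}$ are even. By contrast, $-\operatorname{sign}(w_0)\frac{1}{w_0}\dot\rho\,\rho_{w_0}=-\frac{1}{|w_0|}\dot\rho\,\rho_{w_0}$ is odd. So your identity $w_0z_{w_0}=-\dot\rho\,\rho_{w_0}$, which is also what the paper's computation actually produces, is the correct one for both signs.

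Your reduction of the rest of a) is exact: $\rho_{w_0}<0$ on $(T/2,T)$ if and only if $\rho_{w_0}(T)=\sqrt{2E}\,T'(w_0)\le 0$. The gap is that you never prove $T'(w_0)\le 0$, and neither route you offer can. The time-reversal fallback is circular. It is exactly the paper's argument, which asserts $T'(w_0)<0$ ``from the construction of $\rho^*$'' without proof. Evaluating $\rho_{w_0}(t)=\rho_{w_0}(T-t)+\dot\rho(T-t)\,T'(w_0)$ at $t=T$ just returns $\rho_{w_0}(T)=\sqrt{2E}\,T'(w_0)$.

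Differentiating under the integral sign cannot succeed either, because the monotonicity of $T$, and with it a) itself, fails for some $f\in\mathfrak{F}$. Take $2E=1$ and $f(r)=\epsilon r+M(r-a)_+^3$, which satisfies all the defining hypotheses. For $0<w_0<1/(\epsilon a)$, split \eqref{Period Formula} at $r=a$:
\begin{itemize}
\item the piece over $[0,a]$ equals $2a\arcsin(x)/x$, with $x=w_0\epsilon a$;
\item after substituting $u=w_0f(r)$, the piece over $[a,\rho^*]$ equals $\frac{2}{w_0}\int_x^1\frac{du}{\dot f\,\sqrt{1-u^2}}$.
\end{itemize}
The second piece tends to $0$ locally uniformly as $M\to\infty$, since $\dot f\ge\epsilon$ everywhere and $\dot f\to\infty$ uniformly on $\{u\ge x+\eta\}$ for each $\eta>0$. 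The limit $2a\arcsin(x)/x$ is strictly increasing in $w_0$. So for large $M$ the mean value theorem produces some $w_0$ with $T'(w_0)>0$. Then $\rho_{w_0}(T)>0$, hence $\rho_{w_0}>0$ just before $T$, contradicting a). Physically, raising $w_0$ slows the particle on $[0,a]$, while the near-wall keeps the turning point pinned near $a$.

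What is missing is an extra hypothesis. One that suffices is that $f/\dot f$ be nondecreasing (equivalently, $\log f$ concave); this holds for $r^\alpha$ and $r^\alpha\log(1+r)^\beta$. Correct the prefactor in your substitution to $2/w_0$ rather than $2\sqrt{2E}/w_0$. The formula can then be rewritten as
\[
T=\frac{2}{\sqrt{2E}}\int_0^1\frac{\Phi\bigl(\sqrt{2E}\,s/w_0\bigr)}{s\sqrt{1-s^2}}\,ds,\qquad \Phi:=\frac{f}{\dot f}\circ f^{-1}.
\]
Under this hypothesis the right-hand side is visibly nonincreasing in $w_0>0$, and your reduction then completes a).
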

\begin{proof}
We begin with a). Let $w_0>0$. The argument for $w_0<0$ is symmetric. We first work on the monotone interval $(0,T/2)$. On this interval,
\[
\rho(t)>0,\qquad \dot \rho(t)>0,\qquad g(\rho(t))>0,\qquad \dot g(\rho(t))>0.
\]
Let $y$ be the solution of the homogeneous equation
\begin{equation}\label{eq:jac}
\ddot y + w_0^2 \ddot{g^2}(\rho(t))\, y = 0,
\qquad y(0)=0,\ \dot y(0)=1.
\end{equation}
Define $\xi(t):=\dot \rho(t)$. Differentiating the ODE \eqref{Singular Reduced System} shows that $\xi$
satisfies the same equation \eqref{eq:jac}. Moreover,
$\xi(t)>0$, $t\in [0,T/2)$ and $\xi(T/2)=0$. Thus $\xi$ is a nontrivial solution of \eqref{eq:jac} whose first zero occurs at $T/2$ and is linearly independent from $y$. By the Sturm separation theorem for second--order linear ODEs,
any linearly independent solution of \eqref{eq:jac}, in particular $y$,
cannot vanish on $(0,T/2)$. Since $y(t)>0$ near $t=0$ from the initial conditions, we conclude
$y(t)>0$ for all $t\in(0,T/2)$. Define the \emph{Wronskian}
\[
W(t):=y(t)\dot \rho_{w_0}(t)-\dot y(t) \rho_{w_0}(t).
\]
A direct computation using \eqref{variational system for rhow0} and \eqref{eq:jac} gives
\[
\dot W(t)
= y(t)\bigl(-2w_0 g(\rho(t))\dot g(\rho(t))\bigr),
\]
and therefore $\dot W(t)<0$ on $(0,T/2)$. Since $W(0)=0$, it follows that
\[
W(t)<0 \quad \text{for all } t\in(0,T/2).
\]
Then by the quotient rule,
\[
\frac{d}{dt}\!\left(\frac{\rho_{w_0}(t)}{y(t)}\right)
= \frac{W(t)}{y(t)^2}<0
\quad \text{on } (0,T/2).
\]
Using $\lim_{t\downarrow 0} \rho_{w_0}(t)/y(t)=0$, we obtain
\[
\rho_{w_0}(t)<0 \quad \text{for all } t\in(0,T/2).
\]
To extend to the interval $(T/2,T)$, we use time--reversal symmetry. Recall that
\[
\rho(t;w_0)=\rho(T(w_0)-t;w_0),\qquad t\in (T(w_0)/2,T(w_0))
\]
Differentiating with respect to $w_0$ yields
\[
\rho_{w_0}(t)
= \rho_{w_0}(T-t) + \dot \rho(T-t)\,T'(w_0).
\]
From the construction of $\rho^*$ in the previous section, we have $T'(w_0)<0$. For $t\in(T/2,T)$, writing $s=T-t\in(0,T/2)$ gives
\[
\rho_{w_0}(t)=\rho_{w_0}(s)+\dot \rho(s)\,T'(w_0),
\]
where $\rho_{w_0}(s)<0$, $\dot \rho(s)>0$, and $T'(w_0)<0$. Hence
$\rho_{w_0}(t)<0$ on $(T/2,T)$. Combining both intervals, we conclude $\rho_{w_0}(t)<0$ for all  $t\in(0,T(w_0))$.
all $w_0>0$ and $0<t<T(w_0)$, (resp. $w_0<0$) $\rho_{w_0}(t; w_0)<0$ (resp. $\rho_{w_0}>0$).

Now for b), again without loss of generality, let $w_0>0$. Put \begin{align}z(t,w_0)=w_0\int_0^t g^2(\rho(s,w_0))\,ds.\end{align} Formally differentiating under the integral sign and then using \eqref{Singular Reduced System} and integration by parts,
\begin{align}\label{zw0}
    z_{w_0}(t; w_0)=&\int_0^t g^2(\rho(s,w_0))+2w_0g(\rho(s,w_0))\dot{g}(\rho(s,w_0))\rho_{w_0}(s,w_0)\,ds\\
    \notag=& \int_0^t g^2(\rho(s,w_0))\,ds-\frac{2}{w_0}\int_0^t \ddot{\rho}(s,w_0)\rho_{w_0}(s,w_0)\,ds\\
    \notag =& \int_0^t g^2(\rho(s,w_0))\,ds-\frac{2}{w_0}\dot{\rho}(t,w_0)\rho_{w_0}(t,w_0)\\
    \notag &+\frac{2}{w_0}\int_0^t\dot{\rho}(s,w_0)\dot{\rho}_{w_0}(s,w_0)\,ds.
\end{align}
Using $\eqref{rdot}$ on the third term in \eqref{zw0}, we have
\begin{align}
    z_{w_0}(t,w_0)=-\frac{2}{w_0}\dot{\rho}(t,w_0)\rho_{w_0}(t,w_0)-z_{w_0}(t,w_0).
\end{align}
Then solving for $z_{w_0}$,
\begin{align}
    z_{w_0}(t,w_0)=-\frac{1}{w_0}\dot{\rho}(t,w_0)\rho_{w_0}(t,w_0)
\end{align}
\end{proof}
Now we begin the proof of Theorem \ref{singular point optimal synth}. Our task is to show that geodesics are minimizing up to the time $t=T$. 
\begin{proof}[Proof of Theorem \ref{singular point optimal synth}]
    Fix a plane $\Pi_{\theta_0}$ and consider points $(\overline{\rho},\overline{z})\in \Pi_{\theta_0}$ such that $\overline{\rho},\overline{z}>0$. The argument for $\overline{z}<0$ is symmetric. Note that the only unit speed trajectory meeting points of the form $(\overline{\rho},0)$ is the straight line geodesic $(t,0)$. We will show that there is a unique unit speed trajectory $(\rho(\cdot\,;\lambda_0),z(\cdot\,; \lambda_0))$ in the half plane $\{\theta=\theta_0\}$ meeting $(\overline{\rho},\overline{z})$. Any such trajectory is necessarily minimizing up to this time. 
    
    With $w_0>0,$ recall that $\rho(\cdot\,; w_0)$ increases from 0 until $T(w_0)/2$, then decreases back to 0 at $T(w_0)$ in a symmetric fashion. Put $\rho^*(w_0)=\rho(T(w_0)/2,w_0)$ and let $0<\overline{\rho}\leq \rho^*(w_0)$. There are two times $t_1(w_0)\leq t_2(w_0)$ at which $\overline{\rho}=\rho(t_1(w_0);w_0)=\rho(t_2(w_0); w_0)$, with strict inequality unless $\rho=\rho^*$. Away from the the unique $w_0^*=w_0^*(\overline{\rho})$ such that $\rho(T(w_0^*)/2; w_0^*)=\overline{\rho}$, implicit differentiation gives the useful identities
    \begin{align}\label{implicit}
        \partial_{w_0}t_1=&-\frac{\rho_{w_0}(t_1(w_0); w_0)}{\dot{\rho}(t_1(w_0);w_0)}\\
        \partial_{w_0}t_2=&-\frac{\rho_{w_0}(t_2(w_0); w_0)}{\dot{\rho}(t_2(w_0);w_0)}.
    \end{align}
    By Lemma \ref{technical}, note that $\partial_{w_0}t_1>0$, while $\partial_{w_0}t_2<0$ wherever they are defined.
    
    We form two branches of the trajectory $z(t; w_0)$ by substituting $t_1$ and $t_2$. Set \begin{align}z_1(w_0):=&\,z(t_1(w_0);w_0)\\z_2(w_0):=&\,z(t_2(w_0); w_0).\end{align} We will perform our analysis on the interval $[0,w_0^*]$. Note that by continuity, the two branches glue together at $w_0^*$. Furthermore, $z_1(w_0)\to 0$ as $w_0\to 0$. By the chain rule, Lemma \ref{technical} and \eqref{implicit}, \begin{align}\label{zj derivative}\partial_{w_0}z_1(w_0)=-\rho_{w_0}(t_1(w_0); w_0)\left(\frac{w_0f^2(\overline{\rho})}{\dot{\rho}(t_1(w_0);w_0)}+\frac{\dot{\rho}(t_1(w_0);w_0)}{w_0}\right)>0\end{align}
    so $z_1(\cdot)$ is increasing. As such, $z_1$ attains all values on $[0,z_1(T(w^*_0)/2)]$ exactly once. 
    Switching focus to the other branch, notice that \eqref{zj derivative} simplifies to 
    \begin{align}
        \partial_{w_0}z_2(w_0)=-\rho_{w_0}(t_2(w_0); w_0)\left(\frac{w_0^2f^2(\overline{\rho})+\dot{\rho}^2(t_2(w_0);w_0)}{w_0\dot{\rho}(t_2(w_0);w_0)}\right).
    \end{align}
    The numerator inside the brackets is strictly positive, $-\rho_{w_0}$ is strictly positive, and $\dot{\rho}$ is strictly negative at $t_2$. Consequently, $\partial_{w_0}z_2(w_0)<0$ on $(0,w_0^*)$, so that $z_2$ is a decreasing function. Now we turn out attention to the limiting behavior of the branch $z_2(w_0)$ as $w_0\to 0$. Observe that 
    \begin{align*}
        z_2(w_0)=&\,w_0\left(\int_0^{T(w_0)/2}+\int_{T(w_0)/2}^{t_2(w_0)}\right)f(\rho(t; w_0))^2\,dt\\
        =&\,w_0\left(\int_0^{\rho^*}+\int_{\overline{\rho}}^{\rho^*}\right)\frac{f^2(\rho)}{\sqrt{1-w_0^2f^2(\rho)}}d\rho\\
        =& 2w_0\int_0^{\rho^*}\frac{f^2(\rho)}{\sqrt{1-w_0^2f^2(\rho)}}d\rho- w_0\int_{0}^{\overline{\rho}}\frac{f^2(\rho)}{\sqrt{1-w_0^2f^2(\rho)}}d\rho
    \end{align*}
With $\overline{\rho}$ fixed, the second term goes to $0$ as $w_0\to 0$, so $z_2(w_0)\to+\infty$ as $w_0\to 0$ if and only if 
\begin{align}\label{Iw0}
    I(w_0)=w_0\int_0^{\rho^*}\frac{f^2(\rho)}{\sqrt{1-w_0^2f^2(\rho)}}d\rho\to +\infty
\end{align}
as $w_0\to 0$. Making a change of variables, 
\begin{align}\label{eq:Iw0}
I(w_0)=&\frac{1}{w_0^2}\int_0^1 \frac{s^2}{\sqrt{1-s^2}}\frac{1}{\dot{f}(f^{-1}(s/w_0))}\,ds\\
\notag =& \int_0^1 \frac{s^2}{\sqrt{1-s^2}}\frac{f^2(\rho^*)}{\dot{f}(f^{-1}(s/w_0))}\, ds\\
\notag \geq & \int_0^1\frac{s^2}{\sqrt{1-s^2}}\frac{f^2(f^{-1}(s/w_0))}{\dot{f}(f^{-1}(s/w_0))}\, ds\end{align}
Since $f^2(r)/ \dot{f}(r)\to +\infty$ as $r\to+\infty$ by hypothesis, $I(w_0)\to+\infty$. To be completely rigorous, one may carry out a truncation argument together with Egorov's Theorem in order to conclude.

Therefore, $z_2(w_0)$ attains all values between $z_2(w_0^*)$ and $+\infty$ exactly once, so that overall the two branches $z_1$ and $z_2$ attain all values between $0$ and $+\infty$ exactly once. Thus, there is a unique $w_0\in (0,w_0^*]$ such that $(\rho(t_j(w_0),z_j(w_0))=(\overline{\rho}, \overline{z})$ for either $j=1,2$, meaning that we have exhibited a unique trajectory $(\rho(\cdot; w_0), z(\cdot; w_0))$ reaching $(\overline{\rho}, \overline{z})$ prior to $T(w_0)$, and that this trajectory is necessarily minimizing, completing the proof. 
\end{proof}

We have now the tools necessary for the proof of Theorem \ref{metric}.
\begin{proof}[Proof of Theorem \ref{metric}]
Since $(\mathbb{R}^3,d_{CC})$ is Riemannian away from $\Sigma$ and because $d_{CC}$ is translation invariant in the $z$-direction, it suffices to let $q_0=(0,0,0)$. Let $\varepsilon>0$ and fix $\delta>0$ to be determined. We seek to show that for $\delta>0$ small enough, it holds that $B_{cc}(0,\delta)\subset B(0,\varepsilon)$, where $B_{CC}$ is the metric ball in the $d_{CC}$ distance. By Theorem \ref{singular point optimal synth}, we have \begin{align}B_{CC}(0,\delta)=\{\gamma(t;\lambda_0): u_0^2+v_0^2= 1, t<\min\{\delta,T(\lambda_0)\}.\end{align}We will switch back to using $r$ instead of $\rho$, since we will never follow a trajectory $\rho(t)$ past $t=T$. By rotation invariance, it suffices to demonstrate the existence of a cylinder \begin{align}I_\delta=\{r<r_*(\delta)\}\times (-z_*(\delta),z_*(\delta))\end{align} such that $I_\delta\subset B(0,\varepsilon)$, where \begin{align}z_*(\delta)=&\sup\{z: (x,y,z)\in B_{cc}(0,\delta)\}\\
r_*(\delta)=&\sup\{\sqrt{x^2+y^2}: (x,y,z)\in B_{cc}(0,\delta)\}.\end{align}
Note first by the existence of straight line geodesics in the $xy$-plane and since $r_{w_0}<0$ on $0<t<T(w_0)$, we have $r_*(\delta)=\delta$. Since $\dot{z}=w_0 g^2(\rho)\neq 0$, as long as $w_0\neq 0, r\neq 0$, there can be no critical points of the map $(t,\lambda_0)\mapsto z(t; \lambda_0)$ on $0<t<T(\lambda_0)$. As such, the maximum for $z$ occurs either at some $t(\lambda_0)=\delta<T(\lambda_0)$ or at $t(\lambda_0)=T(\lambda_0)\leq \delta$. We will maximize both possibilities and then compare. 

Let $\delta<T(w_0)$. By Lemma \ref{technical}, $r_{w_0}(\cdot\,,w_0)<0$ on $0<\delta<T(w_0)$ and $z_{w_0}(\delta; w_0)=-\operatorname{sign}(w_0)\frac{1}{w_0}\dot{\rho}(\delta;w_0)\rho_{w_0}(\delta;w_0)$. As such, $z_{w_0}(\delta,w_0)$ vanishes only for the $w_0$ such that $\delta=T(w_0)/2$ at the turning point of $r$. It follows that this is a local maximum for $z(\delta,w_0)$. Note that as $w_0\to \infty$, $T(w_0)\to 0$, so $z(\delta,w_0)\to 0$ by a simple $L^\infty$ estimate on $f(r)^2$. As such, $\delta=T(w_0)/2$ is a global maximum for $z(\delta,w_0)$ on $\delta<T(w_0)$. Let $\hat{r}(\hat{w}_0)=r(T(\hat{w}_0)/2;\hat{w}_0)$. In other words, $f^2(\hat{r}(\hat{w}_0))=\frac{1}{\hat{w}_0^2}$. Then
\begin{align}
    z(\delta,\hat{w}_0)\leq \hat{w}_0 f^2(\hat{r}(\hat{w}_0))\frac{T(\hat{w}_0)}{2}=\frac{\delta}{\hat{w}_0}=\delta f(\hat{r})\leq \delta f(\delta)=:z_*(\delta). 
\end{align}
Since $z(\delta,w_0)$ is decreasing in $w_0$ past $\hat{w}_0$, the other possibility where $z(\delta,w_0)$ is maximized at $w_0=\tilde{w}_0$ such that $T(\tilde{w}_0)=\delta$ produces a smaller $z(\delta,w_0)$ value, albeit one which is also comparable to $\delta f(\delta)$. Using the properties of $r_{w_0}$ and $z_{w_0}$ that we have determined thus far, the geodesic envelope which forms the boundary of $B_{CC}(0,\delta)$ inside of the half plane $\{\theta=\theta_0\}$ is horizontally convex. Thus, we can find $c,c'>0$ and $\delta>0$ small enough such that the cylinders $I_\delta =\{r<\delta\}\times \{\lvert z\rvert <\delta f(\delta)\}$, $\hat{I}_\delta =\{r<c\delta\}\times \{\lvert z\rvert < c'\delta f(\delta)\}$ satisfy 
\begin{align}
    \hat{I}_{\delta}\subset B_{CC}(0,\delta)\subset I_\delta\subset B(0,\varepsilon). 
\end{align}
Small enough open $CC$-balls centered away from $\Sigma$ are also Euclidean open, since the metric is $C^2$ Riemannian on small neighborhoods away from $\Sigma$. This, together with the cylinders that we have constructed demonstrates that the topology generated by $d_{CC}$ is equivalent to the Euclidean topology on $\mathbb{R}^3$. 

For completeness, we will show that all closed $CC$-balls are compact. 

First, note that by construction, and the horizontal convexity of the slices, each slice of a closed ball $\overline{B}_{CC}(q_0,R)$ in $\Pi_{\theta_0}$ for $q_0\in \Sigma$ is a compact set in $\Pi_{\theta_0}$. Due to the rotation invariance of $\overline{B}_{CC}(q_0,R)$ around $\Sigma$, it then follows that $\overline{B}_{CC}(q_0,R)$ itself is compact. See Figure \ref{fig:unit ball}.

For $q_0\notin \Sigma$, we study closed balls centered at $q_0$. By the equivalence of topologies demonstrated in the previous paragraph, it suffices to show that closed $CC$-balls are bounded. Let $\overline{B}_{CC}(q_0,R)$ be a closed ball in the $CC$-metric for some $R>0$. Let $q\in \overline{B}_{CC}(q_0,R)$ and put $r_q$ to be the radial coordinate of $q$, and $z_q$ to be the vertical component. Then, referring back to the Hamiltonian equations \eqref{Hamiltonian System}, let $\gamma:[0,T]\rightarrow \mathbb{R}^3$, $\gamma(t)=(x(t),y(t),z(t))$ be an arc length parametrized admissible curve with covector lift $\lambda(t)=(u(t),v(t),w_0)$. Note that \begin{align*}\lvert \dot{r}(t)\rvert=\sqrt{u^2(t)+v^2(t)}\leq 1.\end{align*} Integrating, we obtain that 
\begin{align*}r_q\leq \sup_{t\in [0,T]} r(t)\leq r_0+T=r_0+\ell(\gamma)\leq r_0+R.\end{align*} Similarly, note that since $f$ is monotone increasing, \begin{align*}\lvert z'(t)\rvert =\lvert w_0\rvert f^2(r(t))\leq \frac{1}{f(r_0)}f^2(r_0+R).\end{align*} Integrating, we obtain that \begin{align*}z_q\leq z_0+T\frac{1}{f(r_0)}f^2(r_0+R))\leq z_0+\frac{R}{f(r_0)}f^2(r_0+R).\end{align*} As such, $\overline{B}_{CC}(q_0,R)$ is bounded. 
\end{proof}
We conclude with a metric estimate on $d_{CC}$ using ideas from the previous proof. 
\begin{theorem}[Ball-Box Estimate]\label{Ball box Theorem}
    For $q=(x,y,z)$ and $q'=(x',y',z')$ with $r=\sqrt{x^2+y^2}$,
there holds the metric comparison
\begin{align}\label{ball box}
d_{CC}(q,q')
\;\simeq\;
|(x-x',y-y')|
+
\min\!\left\{
h(|z-z'|),\;
\frac{|z-z'|}{f(r)}
\right\},
\end{align}
where $h$ is the inverse of the strictly increasing function $r\mapsto rf(r)$, and the implicit constants are uniform on compact subsets of $\mathbb R^3$. 
\end{theorem}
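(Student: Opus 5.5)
We prove the two inequalities $d_{CC}\lesssim \text{RHS}$ and $\text{RHS}\lesssim d_{CC}$ separately, on a fixed compact set $\mathcal K\subset\mathbb R^3$. Translation invariance in $z$ lets us assume $z'=0$. Rotation invariance about $\Sigma$ lets us write $r,r'$ for the cylindrical radii of $q,q'$. Put $\Delta=|(x-x',y-y')|$ and $\zeta=|z-z'|$.

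The basic model estimate is the vertical one. For two points on a common vertical line at radius $r$,
\[
d_{CC}\big((x,y,0),(x,y,\zeta)\big)\simeq \min\{h(\zeta),\,\zeta/f(r)\},
\]
and we prove it first.

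\emph{Upper bound.} There are two competing paths.
\begin{enumerate}
\item The vertical segment at radius $r$ has length $\zeta/f(r)$.
\item The "go out and come back" path moves radially to radius $r+s$, travels vertically there, and returns. Its length is at most $2s+\zeta/f(r+s)\le 2s+\zeta/f(s)$, using monotonicity (A3). Choosing $s=h(\zeta)$ gives $sf(s)=\zeta$, so the length is at most $3h(\zeta)$.
\end{enumerate}

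\emph{Lower bound.} Let $\gamma$ be any admissible unit-speed curve of length $L$ joining the two points.
\begin{itemize}
\item Along $\gamma$ we have $r(t)\le r+L$, so $\zeta\le \int|\dot z|\le L\,f(r+L)$.
\item If $L\ge r$, then $f(r+L)\le f(2L)$. To conclude $\zeta\lesssim Lf(L)$, i.e.\ $L\gtrsim h(\zeta)$, we need a doubling-type bound $f(2L)\le C f(L)$, or some substitute for it. Doubling can fail for flat $f$ such as $e^{-1/r^2}$. In that case we instead use the endpoint analysis from the proof of Theorem \ref{metric}: there the singular-point balls were shown to be comparable to the cylinders $\{r<c\delta\}\times\{|z|<c'\delta f(\delta)\}$, and for such $f$ we would take the implicit constants to come from that theorem at the scale where $h$ is defined.
\item If $L<r$, then $f(r+L)\le f(2r)$. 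Here $\zeta\lesssim L f(r)$ follows once we know $f(2r)\lesssim f(r)$ on compacts with $r$ bounded below, which holds by positivity and continuity; the case of small $r$ is absorbed into the previous bullet.
\end{itemize}
Combining the two cases gives $L\gtrsim\min\{h(\zeta),\zeta/f(r)\}$.

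\emph{Horizontal component.} For the general estimate, $d_{CC}\ge\Delta$ is immediate because $|\dot x|^2+|\dot y|^2\le|u|^2$. For the upper bound, concatenate a horizontal straight segment from $q$ to $(x',y',z)$ with a vertical connection. The vertical piece is evaluated at the smaller of the two radii, and switching between radii $r$ and $r'$ changes $\min\{h,\zeta/f\}$ by at most a bounded factor, or is dominated by $\Delta$ when $|r-r'|\gtrsim$ the vertical cost. This uses the triangle inequality together with the fact that moving the base radius by $\Delta$ costs at most $\Delta$. For the full lower bound, split into two cases. If the vertical term is at most a constant times $\Delta$, we are done. Otherwise any curve of length $L\le\Delta$ stays within radius $\max(r,r')+L$, and the same $\int|\dot z|$ estimate as in the model case applies.

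\emph{Main obstacle.} The hard step is the lower bound $\zeta\lesssim Lf(r+L)\Rightarrow L\gtrsim h(\zeta)$ uniformly near $\Sigma$, which requires comparing $f(2L)$ with $f(L)$. For monomial-type $f$ this is trivial. For general $f\in\mathfrak F$ we would first try to get it directly from the sharp singular-point description $\hat I_\delta\subset B_{CC}(0,\delta)\subset I_\delta$ of Theorem \ref{metric}, which already encodes $z_*(\delta)\simeq\delta f(\delta)$. If $f$ is too flat for that to give comparability, the fallback is to state the estimate with $h$ replaced by the inverse of $r\mapsto rf(2r)$; this is comparable to $h$ whenever $f$ is doubling.
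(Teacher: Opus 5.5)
Your outline has the right architecture: a vertical segment plus an out-and-back detour for the upper bound, and $\zeta\le\int_0^L f(r(t))\,dt\le Lf(r+L)$ split at $L=r$ for the lower bound. But the lower bound has a genuine gap, and you have placed it in the wrong case.

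The case $L\ge r$ needs no doubling. From $\zeta\le Lf(2L)\le 2Lf(2L)$ you get $2L\ge h(\zeta)$ by monotonicity of $h$; this is the paper's case (ii). The failure is in the case $L<r$. Your claim that small $r$ is ``absorbed into the previous bullet'' is wrong, since that bullet assumes $L\ge r$. For $r\to0$ with $L<r$ you need $f(r+L)\lesssim f(r)$ uniformly, i.e.\ doubling of $f$ at $0$, and without it the desired inequality is false.

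For instance, $f(r)=(1+r)e^{-1/r^2}$ lies in $\mathfrak F$. Take $q=(r,0,0)$ and $q'=(r,0,r^2f(r))$ with $r$ small. Then $\zeta/f(r)=r^2<h(\zeta)$, because $r^2f(r^2)<\zeta$, so the right-hand side of the estimate equals $r^2$. Now go out radially by $s=r^3\log(1/r)$, up, and back. For small $r$ this gives $f(r)/f(r+s)\le r$, hence
\[
d_{CC}(q,q')\le 2s+\frac{r^2f(r)}{f(r+s)}\le 2r^3\log(1/r)+r^3=o(r^2).
\]
So no uniform lower constant exists near $\Sigma$. Your suggested remedies do not reach this. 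The cylinders $\hat I_\delta\subset B_{CC}(0,\delta)\subset I_\delta$ only describe balls centred \emph{on} $\Sigma$. Replacing $h$ by the inverse of $r\mapsto rf(2r)$ modifies the $h$-term, whereas the term that fails is $\zeta/f(r)$.

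The paper does not resolve this either; its proof has the same hole in hidden form. In case (i) it asserts $\int_0^T f(r(t))\,dt\lesssim Tf(r_q)$ from $r(t)\le\frac32 r_q$ and monotonicity alone, which is exactly the doubling bound $f(\tfrac32 r)\lesssim f(r)$. In the intermediate range it infers $T\gtrsim h(|\Delta z|)$ from $|\Delta z|\lesssim T$. That does not follow, since $h(t)/t\to\infty$ as $t\to0$ (e.g.\ $h(t)=\sqrt t$ for $f(r)=r$). So the estimate needs an extra hypothesis such as $f(2r)\le Cf(r)$ for small $r$, which holds for $r^\alpha$ and $r^\alpha\log(1+r)^\beta$.

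Under that hypothesis your argument closes:
\begin{itemize}
\item $L\ge r$ is handled as above.
\item $L<r$ gives $\zeta\le Lf(2r)\le CLf(r)$.
\item $d_{CC}\ge\Delta$ takes care of the horizontal term.
\end{itemize}
On the upper bound, your out-and-back competitor has length at most $3h(\zeta)$ and is more elementary than the paper's detour through the axis via the singular synthesis. Just perform the vertical move at $q$ before the horizontal segment, as the paper does by swapping $q$ and $q'$. Then you never have to compare $f(r)$ with $f(r')$, a comparison that would again require doubling.
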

\begin{proof} \emph{(Upper bound.)}
Fix a compact set $V\subset \R^3$ and $R>0$ with $V\subset B(0,R)$.
Let $q=(x,y,z)$ and $q'=(x',y',z')$, set $\Delta z:=z-z'$, and write
$r=\sqrt{x^2+y^2}$, $r'=\sqrt{x'^2+y'^2}$.

We construct two admissible competitors and take the minimum of their lengths.

\smallskip
\noindent\emph{Competitor 1:}
Let $\gamma_1$ be the horizontal straight segment from $(x,y,z)$ to $(x',y',z)$,
followed by the vertical segment from $(x',y',z)$ to $(x',y',z')$.
Then $\ell(\gamma_1)=|(x-x',y-y')|+\frac{|\Delta z|}{f(r')}$, hence
\begin{equation}\label{eq:ub1}
d_{CC}(q,q') \le |(x-x',y-y')|+\frac{|\Delta z|}{f(r')}.
\end{equation}
By swapping the roles of $q$ and $q'$ we also have
\begin{equation}\label{eq:ub1swap}
d_{CC}(q,q') \le |(x-x',y-y')|+\frac{|\Delta z|}{f(r)}.
\end{equation}
\smallskip
\noindent\emph{Competitor 2:}
If $\Delta z=0$, then \eqref{eq:ub1swap} gives $d_{CC}(q,q')\le |(x-x',y-y')|$
and we are done. Assume $\Delta z\neq 0$ and set
\[
\rho := h(|\Delta z|),
\]
so that $\rho f(\rho)=|\Delta z|$.

We build a path $\gamma_2$ as a concatenation $\gamma_2=\eta\ast\sigma\ast\eta'$:

\begin{enumerate}
\item $\eta$ is a horizontal curve at height $z$ that moves from $(x,y,z)$ to a point
$(\tilde x,\tilde y,z)$ with $\sqrt{\tilde x^2+\tilde y^2}=\rho$.
\item $\sigma$ is a concatenation of minimizers of total length $\simeq \rho$
that starts at $(\tilde x,\tilde y,z)$ and stays in the vertical plane through $(\tilde x,\tilde y)$,
and ends at $(\tilde x,\tilde y,z')$ (so it produces vertical displacement $|\Delta z|$).
\item $\eta'$ is a horizontal curve at height $z'$ that moves from $(\tilde x,\tilde y,z')$
to $(x',y',z')$.
\end{enumerate}

The horizontal pieces can be chosen with lengths $\ell(\eta)\le |r-\rho|$ and
$\ell(\eta')\le |r'-\rho|$ by moving radially (their $(x,y)$-projections are straight radial segments).
For the middle piece $\sigma$, we make an initial claim. 

\underline{\textbf{Claim:}} For any compact set $V\subset B(0,R)\subset\R^3$. There exist constants
$C=C(R)\ge 1$ and $c=c(R)\in(0,1]$ such that the following holds.

For any $\rho\in(0,R]$, $\theta_0\in [0,2\pi)$, any $z,z'\in[-R,R]$, and any point
$q$ written in cylindrical coordinates as $(\rho,\theta_0,z)$, there exists an admissible curve $\sigma$ joining
$(\rho,\theta_0,z)$ to $(\rho,\theta_0,z')$ with
\[
\ell(\sigma)\le C\Big(\rho + h(|z-z'|)\Big).
\]
In particular, if $|z-z'|\le c\,\rho f(\rho)$, then $\ell(\sigma)\le C\,\rho$.

\emph{(Proof of Claim.)} If $z=z'$, take $\sigma$ to be the constant curve. Assume $z\neq z'$.

Let $\eta_-$ be the horizontal radial segment from $(\rho,0,z)$ to $(0,0,z)$
(with control $w\equiv 0$), and let $\eta_+$ be the horizontal radial segment
from $(0,0,z')$ to $(\rho,0,z')$. Then
\[
\ell(\eta_-)=\rho,\qquad \ell(\eta_+)=\rho.
\]

By Theorem \ref{singular point optimal synth} and the metric computation
used in the proof of Theorem \ref{metric}, there exists a unit-speed minimizing trajectory
$\mu$ from the axis $\Sigma$ from $(0,0,z)$ to $(0,0,z')$ of length $T(w_0)$ for some
$w_0$, and its vertical displacement satisfies
\[
|z-z'| \simeq_R T(w_0)\,f(T(w_0)),
\]
with constants independent of $w_0$ (but depending on $R$). Equivalently, since $h$ is the inverse of $s\mapsto s f(s)$, we have
\[
T(w_0)\simeq_R h(|z-z'|).
\]
Now concatenate $\sigma:=\eta_- * \mu * \eta_+$. Then
\[
\ell(\sigma)=\ell(\eta_-)+\ell(\mu)+\ell(\eta_+)
\le 2\rho + T(w_0)
\lesssim_R \rho + h(|z-z'|),
\]
which proves the first claim.

If in addition $|z-z'|\le c\,\rho f(\rho)$, then monotonicity of $h$ gives
\[h(|z-z'|)\le h(c\,\rho f(\rho))\lesssim_R \rho\] by compactness. Hence
$\ell(\sigma)\lesssim_R \rho$, which completes the proof of the claim.

As such, the vertical displacement $|\Delta z|$ can be achieved from a normal trajectory $\sigma$ with length comparable to $\rho=h(|\Delta z|)$,
uniformly for points in $V$. Hence
\[
\ell(\sigma)\lesssim_V \rho = h(|\Delta z|).
\]
Therefore
\begin{equation}\label{eq:ub2raw}
\ell(\gamma_2)\;\le\; |r-\rho|+|r'-\rho|+C_V\,h(|\Delta z|).
\end{equation}
Using $|r-\rho|+|r'-\rho|\le |r-r'|+2\rho$ and the reverse triangle inequality
$|r-r'|\le |(x-x',y-y')|$, we obtain
\begin{equation}\label{eq:ub2}
d_{CC}(q,q') \le \ell(\gamma_2)
\;\lesssim_V\;
|(x-x',y-y')| + h(|\Delta z|).
\end{equation}

\smallskip
\noindent\emph{Conclusion.}
Combining \eqref{eq:ub1swap} and \eqref{eq:ub2} gives
\[
d_{CC}(q,q')
\;\lesssim_V\;
|(x-x',y-y')|
+
\min\!\left\{
h(|\Delta z|),\;
\frac{|\Delta z|}{f(r)}
\right\},
\]
which is the desired upper bound.

The lower bound argument contains similar ideas to how we concluded using boundedness of $CC$-balls in the proof of Theorem \ref{metric}.

Let $\gamma:[0,T]\to \R^3$ be an admissible curve joining $q$ to $q'$. In the following put $r_q$ to be the radial coordinate of $q$.
By reparametrization invariance of $\ell$, we may assume $\gamma$ is arclength parametrized, so that $T=\ell(\gamma)$ and there exist controls
$u,v,w\in L^2([0,T])$ with
\[
\dot x=u,\qquad \dot y=v,\qquad \dot z=w\,f(r),\qquad
r(t):=\sqrt{x(t)^2+y(t)^2},
\]
and
\begin{equation}\label{eq:unit-speed-control}
u(t)^2+v(t)^2+w(t)^2=1\quad\text{for a.e. }t\in[0,T].
\end{equation}
By the triangle inequality and Cauchy--Schwarz,
\begin{equation}\label{eq:horizontal-lb}
|(x-x',y-y')|\le \int_0^T 1\,dt=T.
\end{equation}
Since $|\dot r(t)|\le \sqrt{u(t)^2+v(t)^2}\le 1$ a.e., we have
\begin{equation}\label{eq:radius-growth}
r_\gamma(t)\le r_q+T
\qquad\text{for all }t\in[0,T].
\end{equation}
\medskip
Using $|w|\le 1$, the monotonicity of $f$, and \eqref{eq:radius-growth},
\begin{align}
|\Delta z|
&=\left|\int_0^T \dot z(t)\,dt\right|
=\left|\int_0^T w(t)\,f(r(t))\,dt\right|
\le \int_0^T f(r(t))\,dt \notag\\
&\le \int_0^T f(r_q+T)\,dt
= T\,f(r_q+T).
\label{eq:vertical-master}
\end{align}
We claim that \eqref{eq:vertical-master} implies
\begin{equation}\label{eq:vertical-lb-min}
T \;\gtrsim_V\; \min\!\left\{ \frac{|\Delta z|}{f(r)},\; h(|\Delta z|)\right\},
\end{equation}
where $h$ is the inverse of $s\mapsto s f(s)$, and the implicit constant is
uniform on $V$.

\smallskip
\noindent\emph{(i) Short curves: $T\leq r_q/2$.} Assume $r_q>0$ and $T\le r_q/2$. Then $r(t)\ge r_q-T\ge r_q/2$ for all $t$.
On the other hand, note that $r(t)\le r_q+T\le \tfrac32 r_q$, and since $f$ is increasing,
\[
|\Delta z|
\le \int_0^T f(r(t))\,dt
\lesssim \int_0^T f(r_q)\,dt
= T\,f(r_q),
\]
hence
\begin{equation}\label{eq:vertical-lb-rim}
\frac{|\Delta z|}{f(R)}\lesssim T.
\end{equation}

\smallskip
\noindent\emph{(ii) Long curves: $T\ge r_q$.}
Assume $T\ge r_q$. Then $r_q+T\le 2T$, and \eqref{eq:vertical-master} gives
\[
|\Delta z|\le T f(r_q+T)\le T f(2T)=\tfrac12 (2T)f(2T).
\]
Since $s\mapsto s f(s)$ is strictly increasing with inverse $h$, we obtain
\begin{equation}\label{eq:vertical-lb-sing}
2T \ge h(2|\Delta z|).
\end{equation}
Because $q,q'\in V\subset B(0,R)$, it suffices to consider $|\Delta z|\le 2R$.
On $[0,2R]$ the function $h$ is continuous and increasing, hence there exists
a constant $C_R\ge 1$ such that
\begin{equation}\label{eq:h-doubling-compact}
h(2t)\le C_R\, h(t)\qquad\text{for all }t\in[0,2R].
\end{equation}
Combining \eqref{eq:vertical-lb-sing} and \eqref{eq:h-doubling-compact} yields
\begin{equation}\label{eq:vertical-lb-h}
T \ge \frac{1}{2C_R}\, h(|\Delta z|).
\end{equation}

\smallskip
\noindent\emph{(iii) Conclusion of \eqref{eq:vertical-lb-min}.}
If $T\le r_q/2$ we have \eqref{eq:vertical-lb-rim}; if $T\ge r_q$ we have
\eqref{eq:vertical-lb-h}. In the remaining intermediate range
$r_q/2 < T < r_q$, we trivially have $T\gtrsim r_q$ and hence (since $f$ is increasing)
$\frac{|\Delta z|}{f(r)}\lesssim \frac{|\Delta z|}{f(T)}$, while
\eqref{eq:vertical-master} implies $|\Delta z|\le T f(r_q+T)\le T f(2r)$.
On the fixed compact set $V$ (hence $r_q\le R$) this forces $|\Delta z|\lesssim_V T$,
so $T\gtrsim_V h(|\Delta z|)$ as well. Thus \eqref{eq:vertical-lb-min} holds
uniformly on $V$.

From \eqref{eq:horizontal-lb} and \eqref{eq:vertical-lb-min},
\[
\ell(\gamma)=T
\;\gtrsim_V\;
|(x-x',y-y')|
+
\min\!\left\{ \frac{|\Delta z|}{f(r)},\; h(|\Delta z|)\right\}.
\]
Taking the infimum over all admissible $\gamma$ from $q$ to $q'$ gives
\[
d_{CC}(q,q')
\;\gtrsim_V\;
|(x-x',y-y')|
+
\min\!\left\{ \frac{|z-z'|}{f(r)},\; h(|z-z'|)\right\}.
\]
This proves the desired lower bound with constants uniform on compact subsets of
$\R^3$.
\end{proof}
\begin{remark}
We have constructed what is known as a ``ball-box" estimate. See \cite{NagelSteinWainger1985} and Chapter 10 of \cite{AgrachevBarilariBoscainBook2020} for a more in depth treatment of ball-box estimates. Theorem \ref{Ball box Theorem} is the natural analogue of the ball-box estimate in the $\alpha$-Grushin plane $\mathbb{G}_\alpha$, found in \cite{WuJang-Mei2015,KogojLanconelli2012}. One has on compact sets in $(\mathbb{G}_\alpha,d_{\mathbb{G}_\alpha})$
\begin{align}
    d_{\mathbb{G}_\alpha}((x,y),(x',y'))\simeq \lvert x-x'\rvert +\min\left\{\lvert y-y'\rvert^{1/(\alpha+1)},\frac{\lvert y-y'\rvert}{\lvert x\rvert^\alpha}\right\}.
\end{align}
Observe that $\lvert \zeta\rvert\mapsto \lvert \zeta\rvert^{1/(\alpha+1)}$ is the inverse of $\lvert x\rvert \mapsto \lvert x\rvert\cdot\lvert x\rvert^\alpha=\lvert x\rvert^{\alpha+1}$.  
\end{remark}
\begin{figure}
    \centering
    \includegraphics[width=0.5\linewidth]{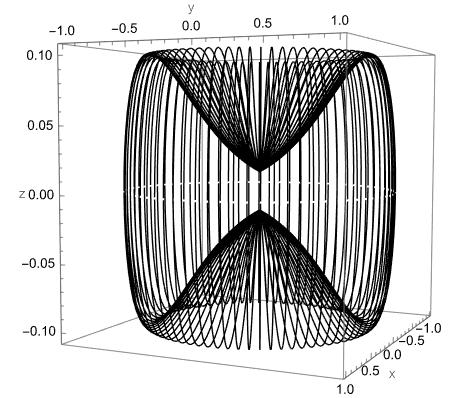}
    \caption{Unit Ball $B_{CC}(q_0,1)$ for the singular point $q_0=(0,0,0)$ in the radial Grushin structure with $f(r)=\log(r+1)^\beta r^\alpha$, and $\alpha=1,\beta=2$.}
    \label{fig:unit ball}
\end{figure}
\subsection{Conjectured Cut Time for Riemannian Initial Points}
For general $f\in \mathfrak{F}$, obtaining the full optimal synthesis at Riemannian points remains out of reach. The primary difficulty is that conjugate times are extremely sensitive to the radial dynamics, making a direct application of the extended Hadamard technique (Theorem \ref{extended Hadamard}) infeasible. For the moment, we restrict ourselves to finding symmetrizing geodesics and obtaining a nontrivial upper bound on the cut time. 
\begin{theorem}\label{Geodesic and Cut time Theorem}
    Let $q_0=(x_0,y_0,z_0)$ be such that $(x_0,y_0)\neq 0$, and $\gamma(t)=(x(t),y(t),z(t))$ a geodesic with $\gamma(0)=q_0$ and initial covector $\lambda_0=(u_0,v_0,w_0)$, written in cylindrical coordinates as $\zeta(t)=(r(t),\theta(t),z(t))$. Then, $\zeta$ satisfies the system of ODEs 
    \begin{align}\label{Cylindrical ODE}
        \begin{cases}
            \ddot{r}&= -w_0^2f(r)\dot{f}(r)+\frac{K^2}{r^3}\\
            \dot{\theta}&= \frac{K}{r^2}\\
            \dot{z}&= w_0 f(r)^2
            \end{cases}\
        \end{align}
where \emph{angular momentum} $K:= v_0 x_0-u_0y_0$ is a constant of motion, and initial data given by
\begin{align}
            r(0)&=\sqrt{x_0^2+y_0^2}\\
            \notag\dot{r}(0)&=  \frac{L}{r(0)}\\
            \notag\theta(0)&=\theta_0\\
            \notag z(0)&= z_0,
\end{align}
and $L:=x_0u_0+y_0v_0$. Let $\hat{\lambda_0}=(\hat{u}_0,\hat{v}_0,\hat{w}_0)$ be the covector obtained by $\hat{w}_0=w_0$ and
\begin{align}
    \begin{pmatrix}\hat{u}_0\\ \hat{v}_0\end{pmatrix}=\frac{1}{x_0^2+y_0^2}\begin{pmatrix}x_0^2-y_0^2 & 2x_0y_0\\ 2x_0y_0 & y_0^2-x_0^2\end{pmatrix}\begin{pmatrix}u_0\\ v_0\end{pmatrix}.
\end{align}
Define $\hat{\gamma}(t)$ to be the geodesic obtained from initial data $q_0$ and the covector $\hat{\lambda_0}$. Then if $K\neq 0$, $\hat{\gamma}$ and $\gamma$ are distinct geodesics maintaining the same radius, opposing angles and intersect for the first time at 
\begin{align}\label{cut time conjecture}T=\min\left\{t>0:\int_0^t \frac{\lvert K\rvert}{r(t)^2}\,dt=\pi\right\}\end{align}
on the half plane $\{\theta=-\theta_0\}.$ 
\end{theorem}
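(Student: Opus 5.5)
The plan is to derive the cylindrical system first and then exploit the reflection symmetry of the Hamiltonian flow across the vertical plane through $q_0$.

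\textbf{Cylindrical system.} The equations \eqref{Cylindrical ODE} are already essentially derived in Section 2.2. Conservation of $K=xv-yu$ gives $\dot\theta=K/r^2$. The radial computation \eqref{Radial ODE} with \eqref{K relation} gives the $\ddot r$ equation, and $\dot z=w_0f(r)^2$ is read off from \eqref{Hamiltonian System}. The initial data follow from $r\dot r=x\dot x+y\dot y=xu+yv$ at $t=0$, which gives $\dot r(0)=L/r(0)$. Since $K\neq 0$, the term $K^2/r^3$ together with the energy identity $\dot r^2+K^2/r^2+w_0^2f(r)^2=2H_f$ keeps $r$ bounded away from $0$. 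Hence the solution is $C^2$ and stays in the Riemannian region, so cylindrical coordinates are legitimate for all time.

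\textbf{Reflection symmetry.} The matrix in the statement is the reflection $S$ of $\mathbb R^2$ across the line through $(x_0,y_0)$. Because $H_f$ depends on $(x,y)$ only through $r$, the map $(x,y,z,u,v,w)\mapsto(S(x,y),z,S(u,v),w)$ preserves $H_f$ and the symplectic form. It therefore maps integral curves of $\vec H_f$ to integral curves.

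Since $S$ fixes $(x_0,y_0)$, the curve $\hat\gamma$ is the reflection of $\gamma$. Under $S$, $L$ is preserved and $K\mapsto -K$, because $S$ is orientation-reversing. So $\hat r$ solves the same initial value problem as $r$ and $\hat r\equiv r$. Likewise $\hat z$ solves the same ODE with the same data, so $\hat z\equiv z$. For the angle, $\hat\theta(t)-\theta_0=-(\theta(t)-\theta_0)$. Distinctness follows from $K\neq0$: we have $(\hat u_0,\hat v_0)\neq(u_0,v_0)$, since the reflection fixes only covectors parallel to $(x_0,y_0)$, i.e. those with $K=0$.

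\textbf{First intersection time.} Because $r$ and $z$ agree, $\gamma(t)=\hat\gamma(t)$ exactly when $\theta(t)\equiv\hat\theta(t)\pmod{2\pi}$. This is equivalent to $2(\theta(t)-\theta_0)\in 2\pi\mathbb Z$, i.e. $\int_0^t K/r(s)^2\,ds\in\pi\mathbb Z$.

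The function $\Phi(t)=\int_0^t|K|/r^2\,ds$ is strictly increasing and starts at $0$. So the first positive time at which it hits $\pi\mathbb Z$ is the first time it equals $\pi$, which is the $T$ in \eqref{cut time conjecture}.

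At that time $\theta(T)=\theta_0\pm\pi$, so the meeting point lies on the opposite half plane, denoted $\{\theta=-\theta_0\}$ in the statement (meaning the half plane at angle $\theta_0+\pi$).

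\textbf{Main obstacle.} The one step needing an argument is that $T<\infty$, i.e. $\Phi$ actually reaches $\pi$. Here $r$ is bounded above: the energy identity gives $w_0^2f(r)^2\le 2H_f$ when $w_0\neq0$, and $f$ is increasing and unbounded. Then $|K|/r^2\ge c>0$, so $\Phi(t)\ge ct\to\infty$.

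If $w_0=0$, the motion is Euclidean straight-line motion in the plane and $r\to\infty$. In that case $\Phi$ increases to exactly $\pi$ only in the limit, giving $T=+\infty$. I would treat this by the convention $\min\emptyset=+\infty$, consistent with Theorem \ref{extended Hadamard} allowing $t_*=\infty$.
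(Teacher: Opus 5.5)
Your proposal is correct and follows essentially the paper's proof. The paper likewise notes that the reflected covector preserves $L$ and sends $K\mapsto -K$, so the two geodesics share $r$ and $z$, have opposite angles about $\theta_0$, and meet exactly at $T$. Your symplectic-reflection framing, the distinctness check and the finiteness of $T$ for $w_0\neq 0$ are details the paper leaves implicit.

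One slip, in your $w_0=0$ remark: there $\Phi(t)$ increases to the angle between $(x_0,y_0)$ and $(u_0,v_0)$. That angle lies strictly inside $(0,\pi)$ when $K\neq 0$, so $\Phi$ does not tend to $\pi$. The conclusion $T=+\infty$ is unaffected.
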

\begin{proof}
Observe that $K(\hat{\lambda}_0)=-K(\lambda_0)$, while $L(\hat{\lambda}_0)=L(\lambda_0)$. Thus, $r(\cdot\,; \lambda_0)=r(\cdot\,; \hat{\lambda_0})$, so that $\hat{\gamma},\gamma$ maintain the same radius and opposing angles. As such, $\gamma$ and $\hat{\gamma}$ intersect when their angles coincide, which happens exactly at the time given in \eqref{cut time conjecture}. 
\end{proof}
\begin{corollary}\label{corollary}
    The geodesic $\gamma$ as described in Theorem \ref{Geodesic and Cut time Theorem} is not minimizing past $T$ and $t_{\operatorname{cut}}(\gamma)\leq T$. 
\end{corollary}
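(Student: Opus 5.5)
The plan is to reduce the corollary to the symmetrizing-geodesic principle stated before Theorem~\ref{extended Hadamard}, and to use the reflection symmetry behind $\hat\gamma$ to remove the $\min\{t_{\operatorname{cut}}(\gamma),t_{\operatorname{cut}}(\hat\gamma)\}$ ambiguity.

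\textbf{Step 1: two distinct curves of equal length.} By Theorem~\ref{Geodesic and Cut time Theorem}, when $K\neq 0$ the geodesics $\gamma$ and $\hat\gamma$ are distinct and satisfy $\gamma(T)=\hat\gamma(T)$. The map on $(u_0,v_0)$ is the reflection $R$ across the line spanned by $(x_0,y_0)$, which is orthogonal. Hence $H_f(q_0,\hat\lambda_0)=H_f(q_0,\lambda_0)$, and the two curves have the same constant speed. In particular $\ell(\gamma|_{[0,T]})=\ell(\hat\gamma|_{[0,T]})$.

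\textbf{Step 2: the two cut times agree.} Let $\Phi$ be the isometry of $(\mathbb{R}^3,d_{CC})$ given by reflecting the $(x,y)$-plane across the line through $(x_0,y_0)$ and leaving $z$ fixed. It preserves $r$, so it preserves $\ell$ and $H_f$, and it fixes $q_0$. Its cotangent lift sends $\lambda_0$ to $\hat\lambda_0$, so by uniqueness of solutions of \eqref{Hamiltonian System} we get $\hat\gamma=\Phi\circ\gamma$. It follows that $t_{\operatorname{cut}}(\gamma)=t_{\operatorname{cut}}(\hat\gamma)$.

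\textbf{Step 3: no minimality past $T$.} Suppose, for contradiction, that $\gamma|_{[0,T+\varepsilon]}$ is minimizing for some $\varepsilon>0$. Consider the concatenation $\tilde\gamma=\hat\gamma|_{[0,T]}*\gamma|_{[T,T+\varepsilon]}$. By Step 1 it has the same length as $\gamma|_{[0,T+\varepsilon]}$, so it is also a length minimizer. By the Pontryagin Maximum Principle together with Theorem~\ref{ideal}, $\tilde\gamma$ is therefore a normal trajectory. Its lift is unique, since there are no abnormals and a Riemannian point determines the covector from the velocity; the case where $\gamma$ passes through $\Sigma$ is handled exactly as in the proof of Theorem~\ref{ideal}. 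Consequently $\tilde\gamma$ is a solution of the $C^1$ Hamiltonian flow that coincides with $\gamma$ on $[T,T+\varepsilon]$. Uniqueness of solutions, run backward in time, then forces $\hat\gamma=\gamma$ on $[0,T]$, contradicting the distinctness established in Theorem~\ref{Geodesic and Cut time Theorem}. Hence $\gamma$ is not minimizing past $T$, and $t_{\operatorname{cut}}(\gamma)\le T$ by the definition of cut time.

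\textbf{Expected main obstacle.} The delicate step is the regularity claim in Step 3: that a minimizer is normal with a lift regular enough for backward uniqueness. If the geodesic passes through $\Sigma$, the Hamiltonian vector field is only locally Lipschitz there. Lipschitz continuity still suffices for uniqueness, so I expect the argument to go through. An alternative route avoids this issue: since $\gamma$ and $\hat\gamma$ have different velocities at $T$ (by distinctness and uniqueness of the flow), the concatenation has a corner at a Riemannian point $\gamma(T)$ with $r\neq 0$ (because $K\neq0$), and a Riemannian corner can be locally shortened.
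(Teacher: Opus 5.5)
Your proof is correct and is essentially the paper's own (implicit) argument: the paper gives no separate proof, reading the corollary off from the symmetrizing geodesic $\hat\gamma$ of Theorem \ref{Geodesic and Cut time Theorem}, which has the same speed as $\gamma$ and meets it at time $T$, and your Step 3 supplies the standard concatenation-and-uniqueness justification that the paper leaves unstated. Two small points: Step 3 already applies directly to $\gamma$, so Step 2 is not needed; and your concern about $\Sigma$ does not arise, since $|K|\le r\sqrt{u^2+v^2}\le rc$ forces $r\ge |K|/c>0$ along both $\gamma$ and $\hat\gamma$, so the concatenated curve lies entirely in the $C^2$ Riemannian region.
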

The Hamiltonian system when restricted to $K=0$ still maintains much of the good behavior that is imported from that of the $\alpha$-Grushin plane, namely minimization of geodesics up to the singular set, which we make precise with the following theorem. In the $\alpha$-Grushin plane, we actually have minimization well beyond the singular set (See \cite{Borza2022}, \cite{AgrachevBarilariBoscainBook2020}) but it is not clear whether this is the case in our setting for general $f$. We note that the proof contains similar ideas to the proofs of Lemma \ref{technical} and Theorem \ref{singular point optimal synth} and can be found in Appendix \ref{s.Appendix}.
\begin{theorem}\label{hitting singular set}
    Let $\gamma(t)=(x(t),y(t),z(t))$ be an arc length parametrized normal trajectory in the radial Grushin space whose initial covector $\lambda_0=(u_0,v_0,w_0)$ satisfies $K=0$ and is such that $q_0=\gamma(0)\notin \Sigma$. Write $\gamma$ in coordinates on the plane $\{\theta=\theta_0\}$ as $(\rho(t),z(t))$. Define 
    \begin{align}
        t_{\Sigma}:=\inf\{t>0: \rho(t)=0\}
    \end{align}
   Then $\gamma\rvert_{[0,t]}$ is a length minimizer for all $0<t\leq t_\Sigma$.
\end{theorem}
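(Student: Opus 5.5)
The plan is to reduce to a two–dimensional problem and then run a field–of–extremals argument in the spirit of Lemma \ref{technical} and Theorem \ref{singular point optimal synth}.

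\emph{Step 1 (reduction to a half plane).} Consider the projection $P(x,y,z)=(r,z)$ onto the closed half plane $\{r\ge 0\}$, equipped with the length element $dr^2+dz^2/f(r)^2$. Since $\dot r^2\le \dot x^2+\dot y^2$, the map $P$ does not increase length. Equality holds for curves lying in a single half plane $\{\theta=\theta_0\}$. On $[0,t_\Sigma)$ the trajectory has $\rho(t)>0$, so $\gamma$ lies in the open half plane $\{\theta=\theta_0\}$ and $\ell(\gamma|_{[0,t]})=\ell(P\circ\gamma|_{[0,t]})$. Hence it suffices to show that $(\rho,z)|_{[0,t]}$ minimizes length among curves in the half plane with metric $dr^2+dz^2/f^2$. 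Moreover, the reflection $\rho\mapsto|\rho|$ preserves length in the doubled plane $\{(\rho,z)\}$ with metric $d\rho^2+dz^2/g(\rho)^2$. It is therefore equivalent to show that the reduced geodesic of \eqref{Reduced Hamiltonian System} minimizes in this 2D $g$-Grushin plane up to its first zero of $\rho$. The endpoint $t=t_\Sigma$ then follows by continuity of length and of $d_{CC}$ (Theorem \ref{metric}).

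\emph{Step 2 (the exponential map on the domain before $\Sigma$).} Fix $q_0=(\rho_0,z_0)$ with $\rho_0>0$. Parametrize unit covectors by $\varphi$, via $u_0=\cos\varphi$ and $w_0=\sin\varphi/f(\rho_0)$. Let $D=\{(t,\varphi):0<t<t_\Sigma(\varphi)\}$, where $t_\Sigma(\varphi)=+\infty$ if the geodesic never reaches $\rho=0$. I would show that $\Phi(t,\varphi)=(\rho,z)(t;\varphi)$ is injective on $D$ with nonvanishing Jacobian
\[
J=\dot\rho\,z_\varphi-\dot z\,\rho_\varphi .
\]
To get the sign of $J$, I would mimic the proof of Lemma \ref{technical}(b). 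Differentiating $z=\int_0^t w_0g^2(\rho)$ in $\varphi$, integrating by parts with $\ddot\rho=-w_0^2g\dot g$, and using the energy identity $\dot\rho^2+w_0^2g^2=1$ should express $J$ through $\rho_\varphi$, $\dot\rho$ and boundary terms. The resulting quantity is a Wronskian-type expression for solutions of the Jacobi equation $\ddot y+w_0^2\ddot{g^2}(\rho)y=0$. A Sturm comparison against the solution $\dot\rho$, together with positivity of $g\dot g$ while $\rho>0$, should then show that $J$ does not vanish on $D$. This gives no conjugate points before $t_\Sigma$.

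\emph{Step 3 (injectivity and conclusion).} For injectivity I would proceed as in the proof of Theorem \ref{singular point optimal synth}. On $D$ the function $t\mapsto z$ is strictly monotone with the sign of $w_0$, and $\rho$ has at most one turning point before $\Sigma$. So, for a target $(\bar\rho,\bar z)$, one can split into the branches $t_1(\varphi)$ and $t_2(\varphi)$ at which $\rho=\bar\rho$. Using the sign of $\rho_\varphi$ from Step 2, I would show that $z$ along each branch is monotone in $\varphi$, giving at most one geodesic of $D$ through each point.

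Minimality then follows. By completeness (Theorem \ref{metric}) and the ideal property (Theorem \ref{ideal}), a minimizer from $q_0$ to $\gamma(t)$ exists and is a normal geodesic. If it reached $\gamma(t)$ before hitting $\Sigma$, injectivity of $\Phi$ on $D$ forces it to coincide with $\gamma$. If instead a competitor first hit $\Sigma$ at a time $s$ and then continued, I would compare lengths using Step 1's reflection. Reflecting the portion after $s$ gives a curve of the same length that stays in $\rho\ge 0$, but has a corner at $\Sigma$, so it cannot be a minimizer unless the reflection is trivial.

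The main obstacle is Step 2: obtaining a clean factorization of $J$ analogous to Lemma \ref{technical}(b), and its sign, when $\rho_0\neq 0$. The initial data $\rho_\varphi(0)=0$ and $\dot\rho_\varphi(0)=-\sin\varphi\neq 0$ break the exact argument used at singular points. I would first try the Wronskian comparison of $\rho_\varphi$ with $\dot\rho$, handling separately the geodesics with $\dot\rho(0)>0$, which reach their turning point before descending to $\Sigma$.
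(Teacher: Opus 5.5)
\paragraph{Step 1 and what Step 2 really requires.} Your Step 1 is correct, and it disposes of competitors more cleanly than the paper does. Since $\dot r^2\le \dot x^2+\dot y^2$, the map $(x,y,z)\mapsto(r,z)$ does not increase length, so $K\neq0$ curves and curves through $\Sigma$ are handled in one stroke; the paper instead treats $K\neq0$ separately via Corollary \ref{corollary}. The core of both arguments is the same, namely monotonicity of $z$ along the branches $\{\rho=\bar\rho\}$, and this is exactly what your Step 2 leaves open.

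The factorization you worry about is not the obstacle. Since $\rho_\varphi(0)=0$ still holds at a Riemannian point, the boundary term vanishes and, exactly as in Lemma \ref{technical}(b), $z_\varphi=-\frac{1}{w_0}\dot\rho\,\rho_\varphi$. Hence $J=-\rho_\varphi/w_0$, and along a branch $\frac{d}{d\varphi}z(t_{\bar\rho}(\varphi);\varphi)=-\frac{\rho_\varphi}{w_0\dot\rho}=\frac{1}{w_0}\frac{dt_{\bar\rho}}{d\varphi}$. So everything reduces to $\rho_\varphi\neq0$ on $D$, i.e.\ to monotonicity in $w_0$ of the quadrature times $t_{\bar\rho}(w_0)$.
\begin{itemize}
\item For $\dot\rho(0)<0$, and for $\dot\rho(0)>0$ before the turning point, this is immediate from $t=\int d\rho/\sqrt{1-w_0^2f(\rho)^2}$.
\item After the turning point it is not immediate. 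At the first return to $\rho_0$ one has $t=2P(w_0)$ with $P(w_0)=\int_{\rho_0}^{\rho^*(w_0)}\frac{d\rho}{\sqrt{1-w_0^2f(\rho)^2}}$, and $\rho_\varphi$ vanishes there exactly when $P'(w_0)=0$.
\end{itemize}

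\paragraph{Why Step 2 fails for general $f$.} Nothing in the definition of $\mathfrak{F}$ makes $P$ monotone. One always has $P\to+\infty$ as $w_0\to0^+$. Now suppose $f$ has a long, nearly flat stretch at some level $Y>f(\rho_0)$, which is compatible with all five axioms. Then $P(w_0)$ is also very large for $w_0$ slightly below $1/Y$, because the geodesic creeps along the plateau where $1-w_0^2f^2$ is small. So $P$ has an interior critical point. For that $w_0$, $\partial_\varphi(\rho,z)$ is a nontrivial Jacobi field vanishing at $t=0$ and at $t=2P(w_0)<t_\Sigma$, inside the Riemannian region. That geodesic is therefore \emph{not} minimizing up to $t_\Sigma$.

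Geometrically, the half plane carries the metric $d\rho^2+dz^2/f(\rho)^2$, whose Gaussian curvature $-f\,(1/f)''$ is positive wherever $1/f$ is concave. A Sturm argument using only $g\dot g>0$ therefore cannot rule out focusing. Step 2 cannot be completed for general $f\in\mathfrak{F}$, and the paper's appendix has the same hole. Its claim that $t_{\rho_1}$ is decreasing in $w_0$ on the branch $L>0$ is exactly this monotonicity. Likewise, the input $T'(w_0)<0$ in Lemma \ref{technical}, which you propose to imitate, is an unproved monotonicity of the same type.

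\paragraph{How to repair it.} Your plan goes through under an extra hypothesis such as convexity of $1/f$, which holds for $f=r^\alpha$. Then the curvature is nonpositive, so a Jacobi field vanishing at $t=0$ has no further zero while $\rho>0$, giving $\rho_\varphi\neq0$ on $D$. Step 3 then follows, with hypothesis \ref{A4} supplying $z\to+\infty$ on the descending branch as $\varphi\to0^+$.
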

\subsection{Conjugacy Via Jacobian Determinants}
To carry out the optimal synthesis via Theorem~\ref{extended Hadamard}, it is essential to control conjugate points along geodesics. Conjugacy is related to the singularities of the exponential map
$\operatorname{Exp}_{q_0}(t\lambda_0)=(x(t;\lambda_0),y(t;\lambda_0),z(t;\lambda_0)).$
We therefore restrict to initial covectors $\lambda_0\in T^*_{q_0}\mathbb{R}^3$ lying on the unit energy shell $\{2E=1\}$, so that $\operatorname{Exp}_{q_0}(t\lambda_0)$ is a unit--speed geodesic. As such, conjugate times are detected by the rank of the differential of the endpoint map in coordinates $(t,\psi_1,\psi_2)$, where $(\psi_1,\psi_2)$ are local coordinates on the energy shell.

Since no single coordinate chart covers $\{2E=1\}$ globally, we work in overlapping charts. Away from $w_0=0$ the energy shell may be parametrized by $(K,L)$. Away from $K=0$, we parametrize by $(L,w_0)$ and away from $L=0$, we parametrize by $(K,w_0)$. These three charts cover the energy shell. Rank conditions for the exponential map are invariant under smooth changes of coordinates, so conjugacy may be analyzed separately in each chart and the resulting conclusions patched together. 

Throughout, we compute Jacobians in cylindrical coordinates $(r,\theta,z)$. Passing to Euclidean coordinates introduces only the standard pre-factor $r(t)$, which does not affect conjugacy away from $r=0$. Note that only the geodesics whose covector satisfies $K=0$ will ever hit $r=0$, so for $K\neq 0$, this pre-factor is harmless. In the case of $f(r)=r$, we will not actually incur conjugacy even at $r=0$ due to a cancellation that occurs. See \eqref{cancellation K=0}. 

In the next section when we specialize to $f(r)=r$, we will begin by working in the chart $(t,L,K)$ for unit--speed geodesics with $w_0\neq 0$, and further specialize to $L,K,w_0>0$, passing to a new coordinate system depending on the minimal and maximal values $\rmin,\rmax$ of the radial trajectory. The analysis of $L,K,w_0$ of opposite sign is identical. There are three special cases among the non-straight line geodesics:
\begin{enumerate}[label=\roman*)]
\item $K=0, L,w_0\neq 0$; Motion in the $\{\theta=\theta_0\}$ plane,
\item $L=0, K,w_0\neq 0$; Motion beginning at one of the radial extrema,
\item $L=K=0, w_0\neq 0$; Both i) and ii). 
\end{enumerate} We will take care of each separately. The straight--line geodesics corresponding to initial covectors $\lambda_0=(u_0,v_0,0)$ are treated separately and shown to have no conjugate points in Lemma \ref{Jacobian Reduction Lemma}.

These reductions allow us to rule out conjugate times prior to the conjectured cut time for all unit--speed geodesics in the $f(r)=r$ setting. For now, we state precise Jacobian identities that are valid for the abstract case in the following lemma.

\begin{lemma}\label{Jacobian Reduction Lemma}
Let $\gamma(t)$ be a unit--speed geodesic of the radial Grushin
structure written in cylindrical coordinates as $(r(t),\theta(t),z(t))$. Let $\operatorname{End}(t,K,w_0)=(r(t),\theta(t),z(t))$ be the endpoint map and define $J_{\operatorname{End}}(t,K,w_0)=\frac{\partial \operatorname{End}}{\partial (t,K,w_0)}$ to be the Jacobian determinant of $\operatorname{End}$ in the coordinates $(K,w_0)$ on the energy shell. Similarly, put $\tilde{J}_{\operatorname{End}}(t,K,w_0)=\frac{\partial\operatorname{End}}{\partial(t,L,w_0)}$. Finally, set $\hat{J}_{\operatorname{End}}(t,K,L)=\frac{\partial\operatorname{End}}{\partial(t,K,L)}$. Then, for $\lambda_0\in \{2E=1\}$, it holds that
\begin{align}\label{Jacobian Reduction}
J_{\operatorname{End}}(t,K,w_0)
=&\frac{1}{w_0}
\bigl(
r_K(t)\,\theta_{w_0}(t)
-
r_{w_0}(t)\,\theta_K(t)
\bigr),\qquad\qquad w_0,L\neq 0\\
\label{Jacobian Reduction 2}
\tilde{J}_{\operatorname{End}}(t,K,w_0)
=&\frac{1}{w_0}
\bigl(
r_L(t)\,\theta_{w_0}(t)
-
r_{w_0}(t)\,\theta_L(t)
\bigr),\qquad\qquad w_0,K\neq 0\\
\label{Jacobian Reduction 3}
    \hat{J}_{\operatorname{End}}(t,K,L)
=&\frac{1}{w_0}
\bigl(
r_K(t)\,\theta_{L}(t)
-
r_{L}(t)\,\theta_K(t)
\bigr),\qquad\qquad w_0\neq 0.
\end{align}
For the unit speed straight line geodesics $\gamma(t)=(x(t),y(t),z(t))$ with initial covector $\lambda_0=(u_0,v_0,0)$, if $J_{\operatorname{Exp}_{q_0}}(t,u_0,w_0)(t)=\frac{\partial (x,y,z)}{\partial (t,u_0,w_0)}$ and $\hat{J}_{\operatorname{Exp}_{q_0}}(t,v_0,w_0)=\frac{\partial (x,y,z)}{\partial (t,v_0,w_0)}$, 
\begin{align}\label{straight line determinant}
    J_{\operatorname{Exp}_{q_0}}(t,u_0,0)=&-t/v_0\int_0^tf^2(r(s;u_0,v_0,0))\,ds,\qquad v_0\neq 0\\
    \hat{J}_{\operatorname{Exp}_{q_0}}(t,v_0,0)=&t/u_0\int_0^tf^2(r(s;u_0,v_0,0))\,ds,\qquad u_0\neq 0,
\end{align}
As such, the straight line geodesics have no conjugate points. 
\end{lemma}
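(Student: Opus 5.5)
The plan is to derive all three identities in \eqref{Jacobian Reduction}--\eqref{Jacobian Reduction 3} from a single conservation law coming from the symplectic structure. The $z$-row of the Jacobian will turn out to be a linear combination of the $r$- and $\theta$-rows plus a correction that only touches the $t$-column.

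\textbf{Step 1: the conservation law.} Let $p$ be any of the shell parameters $K,L,w_0$, with the other chart coordinate held fixed, so that the variation stays on $\{2E=1\}$. Consider the function $t\mapsto\langle\lambda(t),\partial_p q(t)\rangle$. Using Hamilton's equations and the fact that $H_f$ is quadratic in $\lambda$ (so $\lambda\cdot\partial_\lambda H_f=2H_f$), I expect to get
\[
\frac{d}{dt}\langle \lambda,q_p\rangle=-\partial_qH_f\cdot q_p+\partial_p\bigl(\lambda\cdot\partial_\lambda H_f\bigr)-\lambda_p\cdot\partial_\lambda H_f=-\partial_pH_f+2\partial_pH_f=\partial_pH_f=0 .
\]
The last equality holds because the energy is fixed on the shell. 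Since $q(0)=q_0$ does not depend on $p$, we have $q_p(0)=0$, and therefore $\langle\lambda(t),q_p(t)\rangle\equiv 0$.

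This is exactly the mechanism behind Lemma \ref{technical} b). There the same identity was obtained by integrating by parts with \eqref{rdot}; if one prefers to avoid the symplectic language, I would redo that integration by parts with the energy identity $\dot r^2+K^2/r^2+w_0^2f(r)^2=1$ and the radial equation in \eqref{Cylindrical ODE}.

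\textbf{Step 2: passing to cylindrical coordinates.} The pairing is coordinate invariant. In cylindrical coordinates the canonical momenta are $(p_r,p_\theta,p_z)=(\dot r,K,w_0)$, so Step 1 reads
\[
\dot r\,r_p+K\,\theta_p+w_0\,z_p=0 .
\]
For the $t$-column, $\langle\lambda,\dot q\rangle=2H_f=1$ gives
\[
\dot r\cdot\dot r+K\cdot\frac{K}{r^2}+w_0\cdot w_0f^2=1 .
\]
Now apply the row operation: replace the $z$-row by itself plus $\tfrac{1}{w_0}\bigl(\dot r\cdot(r\text{-row})+K\cdot(\theta\text{-row})\bigr)$. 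This does not change the determinant, and the new row is $(1/w_0,0,0)$. Expanding along it yields $\frac{1}{w_0}(r_a\theta_b-r_b\theta_a)$ for each chart $(a,b)\in\{(K,w_0),(L,w_0),(K,L)\}$, with the sign determined by the column ordering. This is where $w_0\ne0$ is used, and it is the only place.

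The side conditions $L\neq0$ and $K\neq0$ are needed only so that $(K,w_0)$, respectively $(L,w_0)$, are genuine coordinates on the shell, since $u_0,v_0$ are recovered from $K,L$ up to a sign of the remaining coordinate. The main care point is regularity across $r=0$ when $K=0$: $\theta$ is then singular, and I would restrict to times where $r\neq0$ or argue by continuity.

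\textbf{Step 3: straight lines.} For $w_0=0$ the previous formula degenerates, so I compute directly. Take $v_0=\pm\sqrt{1-u_0^2-w_0^2}$, so that $\partial_{u_0}v_0=-u_0/v_0$ and $\partial_{w_0}v_0=-w_0/v_0=0$ at $w_0=0$. At $w_0=0$ we have
\[
x=x_0+u_0t,\qquad y=y_0+v_0t,\qquad z_{w_0}=\int_0^tf^2(r(s))\,ds,\qquad z_t=z_{u_0}=0, \qquad x_{w_0}=y_{w_0}=0 .
\]
The Jacobian matrix is therefore block triangular, with determinant
\[
z_{w_0}\,(x_ty_{u_0}-x_{u_0}y_t)=z_{w_0}\,\bigl(-u_0^2t/v_0-tv_0\bigr)=-\frac{t}{v_0}\int_0^tf^2 .
\]
The symmetric computation in the $(v_0,w_0)$ chart gives the second formula. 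Both are nonzero for $t>0$ because $f>0$ off $\Sigma$ and a straight line meets $\Sigma$ at most once, which gives the absence of conjugate points. The main obstacle is justifying Step 1 at the available regularity ($H_f$ only $C^1$ on $\Sigma^*$); I would handle this using the $C^1$ dependence on initial data noted after \eqref{Hamiltonian System}.
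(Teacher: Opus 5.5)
Your proposal is correct and is essentially the paper's proof: the identity $w_0 z_p=-(\dot r\,r_p+K\theta_p)$ for each shell parameter, a row operation turning the $z$-row into $(1/w_0,0,0)$ (you do in one step what the paper does in two), and a direct block-triangular determinant at $w_0=0$. Your derivation of that identity from $\frac{d}{dt}\langle\lambda,q_p\rangle=\partial_pH_f=0$ is the invariant form of the paper's integration by parts with the energy identity; one small slip is that on the unit shell $v_0=\pm\sqrt{1-u_0^2-f(r_0)^2w_0^2}$ rather than $\pm\sqrt{1-u_0^2-w_0^2}$, which is harmless since you only use $\partial_{w_0}v_0=0$ at $w_0=0$.
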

\begin{proof}
Using an integration by parts method together with energy identities similar to what was done in Lemma \ref{technical}, note first that \begin{align}z_{w_0}=&-\frac{1}{w_0}(\dot{r}r_{w_0}+K\theta_{w_0})\\
\notag z_K=& -\frac{1}{w_0}(\dot{r}r_{K}+K\theta_K)\\
\notag z_L=& -\frac{1}{w_0}(\dot{r}r_L+K\theta_{L})\end{align}
Then, for $w_0\neq 0$, since we may add a multiple of one row to another without altering the determinant, we have
\begin{align}
J_{\operatorname{End}}(t,K,w_0)=&\,\begin{vmatrix}
        \dot{r} & r_{K} & r_{w_0}\\ \frac{K}{r^2} & \theta_K & \theta_{w_0}\\ w_0f(r)^2- \frac{K^2}{w_0r^2} & -\frac{1}{w_0}\dot{r}r_K & -\frac{1}{w_0}\dot{r}r_{w_0}  
    \end{vmatrix}\\
    \notag =&\frac{1}{w_0}\begin{vmatrix}\dot{r} & r_{K} & r_{w_0}\\ \frac{K}{r^2} & \theta_K & \theta_{w_0}\\ 1-\dot{r}^2 & -\dot{r}r_K & -\dot{r}r_{w_0}  
    \end{vmatrix}\\
    \notag=&\,\frac{1}{w_0}\begin{vmatrix}\dot{r} & r_{K} & r_{w_0}\\ \frac{K}{r^2} & \theta_K & \theta_{w_0}\\ 1 & 0 & 0
    \end{vmatrix}\\
    \notag=&\,\frac{1}{w_0}(r_K\theta_{w_0}-r_{w_0}\theta_K)
\end{align}
A similar computation demonstrates \eqref{Jacobian Reduction 2} and \eqref{Jacobian Reduction 3}. 

For \eqref{straight line determinant}, note that when $w_0=0$, a geodesic $\gamma(t)$ with unit speed satisfies $\gamma(t)=(x_0+u_0t,y_0+v_0t,z_0)$. Computing the Jacobian, we obtain for $v_0\neq 0$,
\begin{align*}
      J_{\operatorname{Exp}_{q_0}}(t,u_0,0)=&\begin{vmatrix}u_0 & t & x_{w_0}(t; u_0,v_0,0)\\ v_0 & -u_0/v_0 t& y_{w_0}(t;u_0,v_0,0) \\ 0 & 0 & \int_0^t f^2(r(s;u_0,v_0,0)\,ds\end{vmatrix}\\
      =&\begin{vmatrix}u_0 & t & *\\ 1/v_0 & 0& * \\ 0 & 0 & \int_0^t f^2(r(s;u_0,v_0,0)\,ds\end{vmatrix}\\
      =&-\begin{vmatrix}t & u_0 & *\\ 0& 1/v_0& * \\ 0 & 0 & \int_0^t f^2(r(s;u_0,v_0,0)\,ds\end{vmatrix}
\end{align*}
A similar argument holds for the other coordinate system. Note that the right hand sides of \eqref{straight line determinant} are strictly increasing, positive functions of $t$, and hence incur no zeroes. 
\end{proof}
\subsection{Case of \texorpdfstring{$f(r)=r$}{pdf}}
In the following fix $f(r)=r$. For Riemannian points $q_0=(x_0,y_0,z_0)\in \mathbb{R}^3\setminus \Sigma$, $r_0=\sqrt{x_0^2+y_0^2}$ and constant speed geodesics in cylindrical coordinates $(r(t),\theta(t),z(t))$ with initial covector $\lambda_0$ satisfying $w_0,K\neq 0$, the trajectory $r(t)=r(t; \lambda_0)$ oscillates between $0<\rmin\leq \rmax$, which are given via the quadratic formula.
\begin{align}
    \rmin^2=&\frac{2E -\sqrt{4E^2-4w_0^2K^2}}{2w_0^2}\\
    \rmax^2=&\frac{2E+\sqrt{4E^2-4w_0^2K^2}}{2w_0^2}.
\end{align}If we restrict to the portion of the energy shell $\{2E=1\}$ where $w_0,K,L>0$, then we may write
\begin{align}
    w_0(\rmin,\rmax)=& \frac{1}{\sqrt{\rmin^2+\rmin^2}}\\
    K(\rmin,\rmax)=&\frac{\rmax\rmin}{\sqrt{\rmin^2+\rmax^2}}=\rmin\rmax w_0\\
    L(\rmin,\rmax)=&\frac{\sqrt{(\rmax^2-r_0^2)(r_0^2-\rmin^2)}}{\sqrt{\rmin^2+\rmax^2}}=\sqrt{(\rmax^2-r_0^2)(r_0^2-\rmin^2)}w_0.
\end{align}
With 
\begin{align*}U_{r_0}=\{(\rmin,\rmax)\in \mathbb{R}^2: 0<\rmin<r_0<\rmax\}=(0,r_0)\times (r_0,+\infty), \end{align*}the maps $U_{r_0}\ni (\rmin,\rmax)\to (K,L)\to\{2E=1\}$ and $U_{r_0}\ni (\rmin,\rmax)\to (K,w_0)\to\{2E=1\}$ are diffeomorphisms on the portion of the energy shell $\{2E=1\}\cap\{w_0,K,L>0\}$. We note the presence of a singularity of both $(\rmin,\rmax)\to (K,w_0)$ and $(\rmin,\rmax)\to (K,L)$ on the boundary of $U_{r_0}$, in particular where $\rmin=\rmax=r_0$, which occurs at the poles $(0,0,\pm \frac{1}{r_0})$ of the energy shell. The map $(\rmin,\rmax)\to (K,L)$ also incurs singularities where $\rmin=r_0$ or $\rmax=r_0$, which is where $L=0$. We will get around these difficulties by keeping track of all pre-factors in the change of variables when taking limits. For the change of variables we have
\begin{align}\label{Jacobian diffeo}
    \frac{\partial (K,w_0)}{\partial (\rmin,\rmax)}=&-\frac{\rmax^2-\rmin^2}{(\rmin^2+\rmax^2)^2}=-(\rmax^2-\rmin^2)w_0^4\\
    \frac{\partial (L,K)}{\partial(\rmin,\rmax)}=& \frac{w_0^3r_0^4}{L}(\rmax^2-\rmin^2).
\end{align} We remark for later use that
\begin{align}\label{taking a limit}
    (\rmax^2-\rmin^2)w_0^5=w_0^3\sqrt{1+4K^2w_0^2}\approx w_0^3
\end{align}
as $w_0\to 0$. By Lemma \ref{Jacobian Reduction Lemma}, the zeroes of $J_{\operatorname{End}}(t,K,w_0)$ and $\hat{J}_{\operatorname{End}}(t,K,L)$ correspond to the zeroes of \begin{align}\label{Det Definition}
D(t,\rmin,\rmax)=\frac{\partial (r,\theta)}{\partial (\rmin,\rmax)} r_{\rmin}\theta_{\rmax}-r_{\rmax}\theta_{\rmin}\end{align} 
Note that \eqref{Cylindrical ODE} is integrable when $f(r)=r$. For $\phi$ determined by initial conditions, we have that by setting $\xi(t)=\#\{n\geq 0: tw_0+\phi> \frac{\pi}{2}+n\pi\}$, and putting $s=w_0t+\phi$
\begin{align}
    r(t,\rmin,\rmax)=&\sqrt{\rmin^2+(\rmax^2-\rmin^2)\sin^2(s)}\\
    \theta(t,\rmin,\rmax)=&\theta_0+\pi\xi(t)+\arctan\left(\frac{\rmax}{\rmin}\tan(s)\right)-\arctan\left(\frac{\rmax}{\rmin}\tan(\phi)\right).
    \end{align}
We now explain the relationship between the parameter $\phi$ and the initial covector data  explicitly. Eventually we will consider the full range of $\phi\in (-\pi/2,\pi/2]$, which is fully determined by a choice of $0\leq \rmin\leq r_0\leq \rmax<+\infty$ and $L$, where  
\begin{align}\label{phi definition}
    L=(\rmax^2-\rmin^2)\sin\phi\cos\phi.
\end{align}
For now, with $K,L,w_0>0$, we take $\phi\in (0,\pi/2)$. We will later apply a symmetry argument to consider the case when $\phi\in (-\pi/2,0)$, or equivalently the regime on the energy shell where $L<0$. 

Now, computing the necessary partial derivatives,
\begin{align}
    r_{\rmin}=&\frac{\rmin\cos^2(s)}{r(t)}+(\rmax^2-\rmin^2)\sin(s)\cos(s)A_{\rmin}\\
    \notag r_{\rmax}=&\frac{\rmax\sin^2(s)}{r(t)}+(\rmax^2-\rmin^2)\sin(s)\cos(s)A_{\rmax}\\
    \notag \theta_{\rmin}=&\frac{\rmax}{r^2(t)}(-\sin(s)\cos(s)+\rmin A_{\rmin})-C_{\rmin}\\
    \notag \theta_{\rmax}=& \frac{\rmin}{r^2(t)}(\sin(s)\cos(s)+\rmin A_{\rmin})-C_{\rmax},\end{align}
    where we have defined
    \begin{align}
    A_{\rmin}=&\partial_{\rmin}w_0t +\partial_{\rmin}\phi\\
    \notag \notag A_{\rmax}=&\partial_{\rmax}w_0t +\partial_{\rmax}\phi\\
    \notag C_{\rmin}=&\frac{\rmax}{r^2(0)}(-\sin\phi\cos\phi+\rmin\partial_{\rmin}\phi)\\
    \notag C_{\rmax}=& \frac{\rmin}{r^2(0)}(\sin\phi\cos\phi+\rmax\partial_{\rmax}\phi).
\end{align}
Finally, note that 
\begin{align}
    \partial_{a}w_0 =& -aw_0^3,\qquad a=\rmin,\rmax\\
    \notag \partial_{\rmin}\phi=&\frac{-\rmin(\rmax^2-r^2(0))}{\sin\phi\cos\phi(\rmax^2-\rmin^2)^2}\\
    \notag \partial_{\rmax}\phi=&\frac{-\rmax(r^2(0)-\rmin^2)}{\sin\phi\cos\phi(\rmax^2-\rmin^2)^2}.
\end{align}
One may then fully simplify $D(t,\rmin,\rmax)$ as follows. 
\begin{align}
    D(t,\rmin,\rmax)=\frac{1}{r(t)}(A(t)\sin(s)\cos(s)+B(t)\cos^2(s)+C(t)\sin^2(s)),
\end{align}
where $A,B,C$ are affine functions given by
\begin{align}
    A(t)=&1+(\rmax^2-\rmin^2)(A_{\rmax}C_{\rmin}-A_{\rmin}C_{\rmax})\\
    B(t)=&-\rmin C_{\rmax}+\rmax A_{\rmax}\\
    C(t)=&\rmax C_{\rmin}-\rmin A_{\rmin}
\end{align}
Now we may compute further that 
\begin{align}
    B(t)=&-\rmax^2w_0^3t-\tan(\phi)\\
    C(t)=&\rmin^2w_0^3t-\cot(\phi)\\
    A(t)=&w_0^3(\cot(\phi)\rmax^2-\tan(\phi)\rmin^2)t +2. 
\end{align}
Combining everything, we obtain that
\begin{align}
    &D(t,\rmin,\rmax)=\,\,\frac{1}{r(t)}\left(w_0^3t(\cot(\phi)\rmax^2-\tan(\phi)\rmin^2)\sin(s)\cos(s)\right.\\
    \notag&\left.+\,\rmin^2\sin^2(s)-\rmax^2\cos^2(s))+2\cos(s)\sin(s)-\tan(\phi)\cos^2(s)-\cot(\phi)\sin^2(s)\right).
\end{align}
Note that $D(\frac{\pi}{w_0},\rmin,\rmax)=D(0,\rmin,\rmax)=0$. We examine the function
\begin{align}\label{F function definition}
F(t,\rmin,\rmax):=&\frac{-2\cos(s)\sin(s)+\tan(\phi)\cos^2(s)+\cot(\phi)\sin^2(s)}{(\cot(\phi)\rmax^2-\tan(\phi)\rmin^2)\sin(s)\cos(s)+\rmin^2\sin^2(s)-\rmax^2\cos^2(s)}\\
=& \frac{\tan\phi\cos(s)-\sin(s)}{-\rmin^2\tan\phi\sin(s)-\rmax^2\cos(s)}.
\end{align}
A time $0<t<\frac{\pi}{w_0}$ such that $D(t,\rmin,\rmax)=0$ corresponds to 
\begin{align}
    F(t,\rmin,\rmax)=w_0^3t.
\end{align}
Now computing 
\begin{align}
\partial_tF(t,\rmin,\rmax)=w_0\frac{\rmax^2+\rmin^2\tan^2\phi}{(\rmin^2\tan\phi\sin(s)+\rmax^2\cos(s))^2},
\end{align}
so that $F(\cdot, \rmin,\rmax)$ is strictly increasing, with an asymptote at $\tan(s)=-\frac{\rmax^2}{\rmin^2\tan(\phi)}$, which has exactly one solution on $0<t<\frac{\pi}{w_0}$. Furthermore, at the asymptote, $F\to +\infty$ on the left and $F\to -\infty$ on the right. Then, since \begin{align*}\partial_t F(0,\rmin,\rmax)=\frac{w_0}{\rmin^2\sin^2\phi+\rmax^2\cos^2\phi}=\frac{w_0}{\rmin^2+\rmax^2-r^2(0)}>\frac{w_0}{\rmin^2+\rmax^2}=w_0^3\end{align*} and $F(\frac{\pi}{w_0},\rmin,\rmax)=F(0,\rmin,\rmax)=0$, there are exactly two solutions to \begin{align}F(t,\rmin,\rmax)=w_0^3t\end{align} on $[0,\frac{\pi}{w_0}]$. It holds then that $D(t,\rmin,\rmax)$ has its first positive zero at $t=\frac{\pi}{w_0}$. We state our findings as a lemma.
\begin{lemma}\label{Factorization Lemma}
    Fix $q_0\in \mathbb{R}^3\setminus \Sigma$ and put $r_0=\sqrt{x_0^2+y_0^2}$. Let $(r(t),\theta(t),z(t))$ be the cylindrical coordinates of a unit speed geodesic $\gamma$ in the radial Grushin structure corresponding to $f(r)=r$, starting at $q_0$ and with initial covector $\lambda_0=(u_0,v_0,w_0)$ such that $r'(0)\geq 0$, $K=x_0v_0-y_0u_0>0$, $w_0>0$ and $L=x_0u_0+y_0v_0>0$. Then $r(t)$ oscillates between $0<\rmin<\rmax$ satisfying the equations $w_0=\frac{1}{\sqrt{\rmin^2+\rmax^2}}$ and $K=\frac{\rmin\rmax}{\sqrt{\rmin^2+\rmax^2}}$. Furthermore, the Jacobian determinant calculated in the coordinates $(t,\rmin,\rmax)$ simplifies to
\begin{align}\label{Full Jacobian Formula}
&D(t,\rmin,\rmax)=\,\,\frac{1}{r(t)}\left(w_0^3t(\cot(\phi)\rmax^2-\tan(\phi)\rmin^2)\sin(s)\cos(s)\right.\\
    \notag&\left.+\,\rmin^2\sin^2(s)-\rmax^2\cos^2(s))+2\cos(s)\sin(s)-\tan(\phi)\cos^2(s)-\cot(\phi)\sin^2(s)\right).
\end{align}
and its first positive zero is exactly $t_{\operatorname{con}}(\gamma)=\frac{\pi}{w_0}$. 
\end{lemma}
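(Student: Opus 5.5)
The plan has three parts: solve the system explicitly, reduce the conjugacy question to the sign of the $2\times 2$ determinant $D$ (via Lemma \ref{Jacobian Reduction Lemma}), and then count the zeros of $D$ on $(0,\pi/w_0]$ using the monotonicity of an auxiliary function.

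\textbf{Step 1: integrate the radial equation.} With $f(r)=r$, the radial equation in \eqref{Cylindrical ODE} becomes $\ddot r=-w_0^2r+K^2/r^3$. I will pass to $R=r^2$. Using the energy identity $\dot r^2+K^2/r^2+w_0^2r^2=1$ on the shell $\{2E=1\}$, one gets $\ddot R=2-4w_0^2R$. This is a harmonic oscillator, so
\[
R(t)=\rmin^2+(\rmax^2-\rmin^2)\sin^2(w_0t+\phi).
\]
The turning points are the roots of $w_0^2R^2-R+K^2=0$. By Vieta, $\rmin^2+\rmax^2=1/w_0^2$ and $\rmin^2\rmax^2=K^2/w_0^2$, which are exactly the stated relations for $w_0$ and $K$. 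Matching $\dot R(0)=2L$ gives \eqref{phi definition}. Integrating $\dot\theta=K/R$ gives the arctangent formula for $\theta$, with the counter $\xi(t)$ correcting the branch. Since $K,L>0$, the phase $\phi$ lies in $(0,\pi/2)$.

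\textbf{Step 2: reduce to the determinant $D$.} By Lemma \ref{Jacobian Reduction Lemma}, in the chart $(t,K,w_0)$ the Jacobian of $\operatorname{End}$ equals $w_0^{-1}(r_K\theta_{w_0}-r_{w_0}\theta_K)$. Changing variables to $(\rmin,\rmax)$ multiplies this by the nonvanishing factor \eqref{Jacobian diffeo} on the open set $U_{r_0}$. So the zeros of the Jacobian are exactly the zeros of $D=r_{\rmin}\theta_{\rmax}-r_{\rmax}\theta_{\rmin}$. Passing from cylindrical to Euclidean coordinates only introduces the factor $r(t)\ge\rmin>0$, so it creates no new zeros.

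I will then differentiate the explicit formulas, using $\partial_a w_0=-aw_0^3$ and implicit differentiation of \eqref{phi definition} for $\partial\phi$. Collecting terms gives $r\,D=A\sin s\cos s+B\cos^2 s+C\sin^2 s$, with $A,B,C$ affine in $t$, which is \eqref{Full Jacobian Formula}.

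\textbf{Step 3: count the zeros of $D$.} This is the main obstacle, since $D$ mixes an affine-in-$t$ part and a trigonometric part. I will divide by the coefficient of $w_0^3t$. This rewrites $D=0$ as $F(t)=w_0^3t$, where the simplified ratio is
\[
F=\frac{\tan\phi\cos s-\sin s}{-\rmin^2\tan\phi\sin s-\rmax^2\cos s}.
\]
The key facts to verify are the following.
\begin{enumerate}[label=(\roman*)]
\item $\partial_tF=w_0(\rmax^2+\rmin^2\tan^2\phi)/(\cdots)^2>0$, so $F$ is strictly increasing in $t$ on each branch.
\item $F$ has a single pole on $(0,\pi/w_0)$, where $\tan s=-\rmax^2/(\rmin^2\tan\phi)$, with $F\to+\infty$ from the left and $F\to-\infty$ from the right.
\item $F(0)=F(\pi/w_0)=0$, because $s$ shifts by $\pi$ and $\tan$ is $\pi$-periodic.
\item $\partial_tF(0)=w_0/(\rmin^2+\rmax^2-r_0^2)>w_0^3$.
\end{enumerate}

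With these, the comparison goes as follows. On the left branch $F$ starts above the line $w_0^3t$ immediately after $t=0$ by (iv), and it stays above up to the pole because it blows up to $+\infty$ there. On the right branch $F$ increases from $-\infty$ to $0$ at $t=\pi/w_0$, while the line is positive on that interval. An increasing function starting at $-\infty$ and ending at $0$ below a positive line cannot cross it before the endpoint. The one thing to check carefully is that $F$ does not touch the line on the left branch: a crossing there is excluded because $F-w_0^3t$ starts positive and tends to $+\infty$ at the pole, so an interior zero would require a tangency, which I would rule out by comparing the derivatives there.

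I also have to check that dividing by the coefficient did not discard zeros. Where that coefficient vanishes, the remaining trigonometric part is nonzero, so those points are not zeros of $D$.

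Hence the only zeros of $D$ on $[0,\pi/w_0]$ are $t=0$ and $t=\pi/w_0$, which gives $t_{\operatorname{con}}=\pi/w_0$.
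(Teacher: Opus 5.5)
Your route is the same as the paper's: explicit integration, reduction to $D$, then comparing $F$ with the line $w_0^3t$. Your items (i)--(iv) are correct. The gap is in the one step that actually decides the lemma, namely excluding solutions of $F(t)=w_0^3t$ on the left branch $(0,t_p)$, where $t_p$ is the pole.

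You argue that $F-w_0^3t$ is positive just after $0$ and tends to $+\infty$ at $t_p$, so an interior zero would have to be a tangency. That does not follow. A continuous function that is positive near both ends of an interval can dip below zero and come back, giving two transversal zeros. Monotonicity of $F$ does not prevent this, because the line is increasing too. ``Comparing derivatives'' at a hypothetical zero gives nothing without more information about $F$. The paper's text also just asserts the solution count here, so it cannot be cited to fill the gap.

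The missing ingredient is second-order information, namely convexity of $F$ on the left branch. Write $Q(s)=\rmax^2\cos s+\rmin^2\tan\phi\sin s=M\cos(s-\beta)$ with $M>0$ and $\tan\beta=(\rmin^2/\rmax^2)\tan\phi$.
\begin{itemize}
\item Since $\rmin<\rmax$, one has $0<\beta<\phi$. On the left branch $s-\beta\in(\phi-\beta,\pi/2)$, so $Q$ is positive and strictly decreasing there.
\item Hence $\partial_tF=w_0(\rmax^2+\rmin^2\tan^2\phi)/Q^2$ is increasing, and $F$ is convex on $[0,t_p)$.
\item By (iii) and (iv), $F(t)\ge \partial_tF(0)\,t>w_0^3t$ for $0<t<t_p$.
\end{itemize}
Alternatively, put $u=w_0t$ and $P(u)=(\rmin^2+\rmax^2)\cos u+(\rmax^2-\rmin^2)\cos(u+2\phi)$. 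Then $F=2\sin u/P(u)$, and $F=w_0^3t$ becomes $uP(u)=2(\rmin^2+\rmax^2)\sin u$. This is impossible on $(0,\pi)$, because $2\sin u-u\cos u=2\cos^2(u/2)\,(2\tan(u/2)-u)+u\ge u$ there and $\rmax^2-\rmin^2<\rmax^2+\rmin^2$.

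A smaller point concerns the endpoint. The time $t=\pi/w_0$ is \emph{not} a solution of $F=w_0^3t$, since $F=0$ there. The zero of $D$ at $\pi/w_0$ comes from the factor $\sin s-\tan\phi\cos s$. This factor divides both the coefficient of $w_0^3t$ and the $t$-free part, and it vanishes exactly at $t=0,\pi/w_0$. So your remark that the $t$-free part is nonzero wherever that coefficient vanishes holds only on the open interval. To get $t_{\operatorname{con}}=\pi/w_0$ rather than only $t_{\operatorname{con}}\ge\pi/w_0$, you must check $D(\pi/w_0)=0$ directly, as the paper does.
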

 We now extend our analysis by symmetry to $E_1^*$, which is the portion of the energy shell $\{2E=1\}$ such that none of $K,L,w_0$ are zero. 
\begin{corollary} On $E_1^*=\{2E=1\}\setminus(\{K=0\}\cup\{L=0\}\cup\{w_0=0\})$, the first conjugate time of a geodesic occurs at $t=\frac{\pi}{\lvert w_0\rvert}$.
\end{corollary}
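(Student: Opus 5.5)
The plan is to reduce every sign pattern of $(K,L,w_0)$ on $E_1^*$ to the case $K,L,w_0>0$ treated in Lemma \ref{Factorization Lemma}, using discrete symmetries of the Hamiltonian system \eqref{Cylindrical ODE} for $f(r)=r$. Each symmetry will be a diffeomorphism of $\mathbb{R}^3$ fixing $q_0$ (or conjugating $q_0$ to a point with the same $r_0$), together with an induced linear map on $T^*_{q_0}\mathbb{R}^3$ that preserves $\{2E=1\}$ and intertwines the exponential maps. Since such maps preserve the rank of $d\operatorname{Exp}_{q_0}$, they carry conjugate times to conjugate times.

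First, the reflection $z\mapsto -z$ with $w_0\mapsto -w_0$ leaves the system invariant: the equation for $\ddot r$ involves only $w_0^2$, and $\dot z=w_0r^2$ changes sign together with $z$. This map sends geodesics to geodesics and preserves $K$ and $L$. Second, reflecting $(x,y)$ across the line through $(x_0,y_0)$ is exactly the map $\lambda_0\mapsto\hat\lambda_0$ of Theorem \ref{Geodesic and Cut time Theorem}. It fixes $q_0$, sends $K\mapsto -K$, preserves $L$ and $w_0$, and is an isometry of the structure because $f$ depends only on $r$. These two symmetries reduce the problem to $K,w_0>0$. In each case the candidate conjugate time $\pi/|w_0|$ is unchanged, since $|w_0|$ is invariant.

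The sign of $L$ is the remaining issue, and I expect it to be the main obstacle. No isometry fixing $q_0$ reverses $\dot r(0)$, so I would instead use time reversal. For $L<0$ we have $\phi\in(-\pi/2,0)$. In the explicit formulas for $r,\theta$, the geodesic is the arc of the same $(\rmin,\rmax)$-orbit, started at phase $s=\phi$ instead of $|\phi|$. The determinant $D$ in \eqref{Full Jacobian Formula} depends on $\phi$ only through $\tan\phi$, $\cot\phi$, and $s=w_0t+\phi$. I would redo the monotonicity argument for $F$ in \eqref{F function definition} with $\phi<0$, which amounts to replacing $\phi$ by $-\phi$ and $s$ by $-s$ and keeping track of signs.

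Concretely, I need three facts for $\phi<0$. The first is the sign of $\partial_tF$; its formula is a positive quantity times $w_0$ over a square, so it does not depend on the sign of $\phi$. The second is the location of the unique pole of $F$ in $(0,\pi/w_0)$, where $\tan s=-\rmax^2/(\rmin^2\tan\phi)$ now has a positive right-hand side. The third is the inequality $\partial_tF(0)=w_0/(\rmin^2+\rmax^2-r_0^2)>w_0^3$, which still holds. Together with $F(0)=F(\pi/w_0)=0$, these give that the line $w_0^3t$ meets the graph of $F$ only at the endpoints of $[0,\pi/w_0]$. So the first positive zero of $D$ is again $\pi/w_0$.

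Finally, I would translate zeros of $D$ back to zeros of $J_{\operatorname{End}}$ and $\hat J_{\operatorname{End}}$ through \eqref{Jacobian diffeo}. On $E_1^*$ we have $L\neq0$, $w_0\neq0$, and $\rmin<r_0<\rmax$, so the change-of-variables prefactors $(\rmax^2-\rmin^2)w_0^4$ and $w_0^3r_0^4(\rmax^2-\rmin^2)/L$ are nonzero and finite. Also $r(t)\ge\rmin>0$ since $K\neq0$, so the passage from cylindrical to Euclidean coordinates does not introduce spurious zeros. Hence $t_{\operatorname{con}}=\pi/|w_0|$ on all of $E_1^*$.
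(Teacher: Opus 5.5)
Your reductions for the signs of $w_0$ and $K$ are correct and differ from the paper. The paper asserts that $D$ is invariant under $\rmin\mapsto-\rmin$ and reruns the $F$ analysis with $F$ decreasing. You instead use the isometries $z\mapsto 2z_0-z$ and the reflection of the $(x,y)$-plane across the line through $(x_0,y_0)$. This is cleaner, because their cotangent lifts preserve $H_f$ and $|w_0|$ and so transport conjugate times.

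For $L<0$ you follow the paper: replace $\phi$ by $-\phi$ and rerun the monotonicity of $F$. That deduction has a gap, and the paper's one-line proof of this case has the same gap. Your three facts are true, but they do not imply that $F(t)=w_0^3t$ has no root in $(0,\pi/w_0)$.

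First, $t=\pi/w_0$ is not a root at all, since $F(\pi/w_0)=0\neq\pi w_0^2$. The vanishing $D(\pi/w_0)=0$ comes from the factor $\tan\phi\cos s-\sin s=-\sin(w_0t)/\cos\phi$, which is common to the numerator and denominator and was cancelled when $F$ was simplified.

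After the pole, $F$ increases to $F(\pi/w_0)=0$, so $F<0<w_0^3t$ and there is no crossing. Before the pole, however, "$F$ increases from $0$ to $+\infty$ with $F'(0)>w_0^3$" does not prevent $F$ from dipping below the line and crossing it twice. Concretely, with $x=w_0t$,
\[
F=\frac{w_0^2\sin x}{p\cos x-m\sin x},\qquad p=w_0^2(\rmin^2\sin^2\phi+\rmax^2\cos^2\phi),\quad m=w_0^2(\rmax^2-\rmin^2)\sin\phi\cos\phi .
\]
All three of your facts hold for every $p\in(0,1)$ and every real $m$. Yet for $p=1/10$, $m=-1$ one has $F<w_0^3t$ at $x=1$, so there are two crossings before the pole. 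Some input specific to the geometry is therefore missing. It is genuinely needed when $m<0$, that is, when $L<0$ (with $w_0>0$).

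The fix is to keep the common factor. From \eqref{Full Jacobian Formula} one gets
\[
r(t)\sin\phi\cos\phi\,D=-\sin x\,\Psi(x),\qquad \Psi(x)=\sin x-x\,(p\cos x-m\sin x).
\]
So $D$ vanishes at $x=\pi$, and it suffices to show $\Psi>0$ on $(0,\pi)$.

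The missing input is the identity $p(1-p)-m^2=w_0^4\rmin^2\rmax^2=w_0^2K^2>0$, where $1-p=w_0^2r_0^2$. Write $\sqrt p=\cos\beta$ with $\beta\in(0,\pi/2)$ and use $\sin x>0$:
\[
p\cos x-m\sin x\le p\cos x+\sqrt{p(1-p)}\,\sin x=\cos\beta\cos(x-\beta)=\tfrac12\bigl(\cos x+\cos(x-2\beta)\bigr)\le\cos^2(x/2).
\]
Hence $\Psi(x)\ge\sin x-x\cos^2(x/2)=2\cos^2(x/2)\bigl(\tan(x/2)-x/2\bigr)>0$ on $(0,\pi)$.

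This estimate treats both signs of $\phi$ at once, and it even covers the limit $K=0$. After it, your final paragraph on the nonvanishing change-of-variables prefactors completes the argument.
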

\begin{proof}
Observe that $D(t,\rmin,\rmax)$ is invariant up to sign change in $\rmin$ and $\rmax$ separately. Note that the sign change $K\to -K$ corresponds to taking one of $\rmin\to -\rmin$ or $\rmax\to -\rmax$. 

For the sign change in $w_0$, the analysis is identical, but one shows that the resulting $F$ in \eqref{F function definition} is strictly decreasing instead of strictly increasing. The argument for the derivative of $F$ at $t=0$ is the same but the inequalities are reversed, ultimately showing that the conjugate time occurs at $t=\frac{\pi}{\lvert w_0\rvert}$.

Now for the sign change in $L$, we exchange $\phi$ for $-\phi$. The proof of Lemma \ref{Factorization Lemma} proceeds exactly as before and the same conclusion holds. 
\end{proof}
\subsection{Conjugate Time Analysis for Covector Edge Cases} It remains to study the three exceptional cases for geodesic behavior. We list them again for clarity.  \begin{enumerate}[label=\roman*)]
\item $K=0, L,w_0\neq 0$; Motion in the $\{\theta=\theta_0\}$ plane,
\item $L=0, K,w_0\neq 0$; Motion beginning at one of the radial extrema,
\item $L=K=0, w_0\neq 0$; Both i) and ii). 
\end{enumerate}
i) On $K=0$, $L,w_0\neq 0$, we may use the coordinates $(K,w_0)$ on the energy shell. Observe that the expression for $D(t,\rmin,\rmax)$ in \eqref{Full Jacobian Formula} is still well defined upon taking $\rmin\to 0^+$ or equivalently, as $K\to 0$. We pass to Euclidean coordinates, which introduces a pre-factor of $r(t)$. This will allow us to study potential conjugacy on the set $\{r=0\}$, although we will rule this out shortly. Indeed, by Lemma \ref{Jacobian Reduction}, \eqref{Jacobian diffeo}, and Lemma \ref{Factorization Lemma},
\begin{align}\label{cancellation K=0}
&\frac{\partial \operatorname{Exp}_{q_0}}{\partial (t,K,w_0)}(t,K(\rmin,\rmax),w_0(\rmin,\rmax))= r(t)\frac{\partial (r,\theta,z)}{\partial (t,K,w_0)}(t,K(\rmin,\rmax),w_0(\rmin,\rmax))\\
    \notag&= -r(t)\frac{1}{w_0^4(\rmax^2-\rmin^2)}\frac{\partial(r,\theta)}{\partial(\rmin,\rmax)}\\
    \notag&= \frac{-1}{w_0^5(\rmax^2-\rmin^2)}(w_0^3 t G(t, \rmin,\rmax)+H(t,\rmin,\rmax))
\end{align}
where $G(t)$ and $H(t)$ are determined by \eqref{Full Jacobian Formula}. As such, the pre-factor from the change of variables to $(\rmin,\rmax)$ introduces no singularity at $\rmin=0$. Therefore, $\frac{\partial \operatorname{Exp}_{q_0}}{\partial (t,K,w_0)}(t,0,w_0)=0$ exactly when $w_0^3tG(t,0,\rmax)+H(t,0,\rmax)=0$. Sending $\rmin\to 0^+$, we note that it is still the case that $D(0,0,\rmax)=D(\frac{\pi}{\lvert w_0\rvert},0,\rmax)=0$. Furthermore, the conjugate time analysis still reduces to the study of the equation $F(t,0,\rmax)=w_0^3t$. Using $w_0^2=\frac{1}{\rmax^2}$ for $\rmin=0$, this simplifies to
\begin{align}\label{tan equation}
    w_0t=\tan(w_0t+\phi)-\tan(\phi).
\end{align}
It can be shown (See \cite{albert2025geodesicsgrushinspaces}), that \eqref{tan equation} admits no solution for $t\in (0,\frac{\pi}{\lvert w_0\rvert})$. As such, for the planar motion geodesics in case i) with $K=0,L\neq 0,w_0\neq 0$, the first conjugate time is still $t_{\operatorname{con}}=\frac{\pi}{\lvert w_0\rvert}$. 

ii) We move to case ii), where $L=0$ and $K,w_0\neq 0$. Now working in the coordinates $(K,L)$ on the energy shell, we have
\begin{align}
\hat{J}_{\operatorname{end}}(t,K(\rmin,\rmax),L(\rmin,\rmax))=\frac{1}{w_0}\frac{\partial(\rmin,\rmax)}{\partial (K,L)}D(t,\rmin,\rmax)
\end{align}
We may pass to the limit as $L\to 0$ holding $K,w_0\neq 0$, either by taking $\rmin\to r_0^-$, which corresponds to $\phi\to 0$, or by taking $\rmax\to r_0^+$, which corresponds to $\phi\to \pi/2$. We will take the limit in $\phi\to 0$. The other calculation is similar. We write using \eqref{phi definition} and \eqref{Jacobian diffeo}
\begin{align}\label{L zero case}
&\hat{J}_{\operatorname{end}}(t,K(\rmin,\rmax),L(\rmin,\rmax))=\frac{\sin\phi\cos\phi}{w_0^4r_0^4}D(t,\rmin,\rmax)\\
\notag&=\frac{1}{w_0^4r_0^4 r(t)}(w_0^3t(\cos^2(\phi)\rmax^2-\sin^2(\phi)\rmin^2)\sin(s)\cos(s)-\sin^2\phi\cos^2(s)-\cos^2\phi\sin^2(s)\\
\notag&+\sin\phi\cos\phi(w_0^3t(\rmin^2\sin^2(s)-\rmax^2\cos^2(s))+2\cos(s)\sin(s)))\\
\notag&\xrightarrow{\phi\to 0}\frac{1}{w_0^4r_0^4 r(t)}(w_0^3t\rmax^2\sin(w_0t)\cos(w_0t)-\sin^2(w_0t))\\
\notag=&\frac{1}{w_0^4r_0^4 r(t)}\sin(w_0t)(w_0^3t\rmax^2\cos(w_0t)-\sin(w_0t)).
\end{align}
We may again show that the above has its first positive zero at $t=\frac{\pi}{\lvert w_0\rvert}$, owing to the $\sin(w_0t)$ factor, and the argument to see that the parenthetical quantity does not vanish on this interval is similar to the proof regarding $F$ in the previous section. 

iii) For the final case, we convert the expression in the last line of \eqref{L zero case} back to Euclidean coordinates to dispose of the pre-factor of $1/r(t)$, then setting $w_0=\pm \frac{1}{r_0}$, the same analysis as in ii) applies. 

 As such, for all unit speed geodesics there is no conjugate time up to $\tau=\frac{\pi}{\lvert w_0\rvert}$, which we take to be $+\infty$ when $w_0=0$, and for all geodesics such that $w_0\neq 0$, it holds that $t_{\operatorname{con}}=\tau$. We state this as a Theorem.
\begin{theorem}\label{Full Conjugate Time Theorem}
    Let $\gamma(t)=(x(t),y(t),z(t))$ be an arc length parametrized geodesic in the radial Grushin structure starting from a Riemannian point $q_0\in \mathbb{R}^3\setminus \Sigma$ with $f(r)=r$ and initial covector $\lambda_0=(u_0,v_0,w_0)\in E_1$. Then, 
    \begin{align}
    t_{\operatorname{con}}(\gamma)=\frac{\pi}{\lvert w_0\rvert},
    \end{align}
    where $t_{\operatorname{con}}(\gamma)=+\infty$ for the straight line geodesics when $w_0=0$.  
\end{theorem}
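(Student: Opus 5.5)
The plan is to assemble the case analysis carried out just before the statement into a single argument over a cover of the unit energy shell $E_1=\{2E=1\}$. We split $E_1$ into five pieces: the generic set $E_1^*$, the sets where exactly one of $K,L$ vanishes with $w_0\neq 0$ (cases i) and ii)), the set $K=L=0$, $w_0\neq0$ (case iii)), and the equator $w_0=0$. On each piece we use a chart of the form $(K,w_0)$, $(L,w_0)$ or $(K,L)$ that is a local diffeomorphism near the covector in question. Rank of $d\operatorname{Exp}_{q_0}$ at $t\lambda_0$ does not depend on the chart, so it suffices to show, chart by chart, that the Jacobian of $(t,\psi_1,\psi_2)\mapsto \operatorname{Exp}_{q_0}(t\lambda_0(\psi))$ has no zero on $(0,\pi/|w_0|)$ and does vanish at $t=\pi/|w_0|$.

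For $w_0=0$ we invoke \eqref{straight line determinant} from Lemma \ref{Jacobian Reduction Lemma}. The determinants there are $\mp\frac{t}{v_0}\int_0^t s^2\,ds$-type expressions with $r(s)=|(x_0,y_0)+s(u_0,v_0)|$. These are nonzero for every $t>0$, because $r$ vanishes at most at one instant, so $t_{\operatorname{con}}=+\infty$. For $\lambda_0\in E_1^*$, the corollary following Lemma \ref{Factorization Lemma} already gives $t_{\operatorname{con}}=\pi/|w_0|$. This uses the fact that the change of variables $(\rmin,\rmax)\mapsto(K,w_0)$ or $(\rmin,\rmax)\mapsto(K,L)$ has nonvanishing Jacobian \eqref{Jacobian diffeo} on the interior of $U_{r_0}$. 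It also uses the fact that $r(t)\ge\rmin>0$, so the cylindrical-to-Euclidean factor $r(t)$ is harmless.

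For the edge cases we follow the limits computed above. In case i) we use \eqref{cancellation K=0}, where the prefactor $r(t)/\bigl(w_0^4(\rmax^2-\rmin^2)\bigr)$ stays finite and nonzero as $\rmin\to0^+$, by \eqref{taking a limit}. The zeros then reduce to \eqref{tan equation}, which has no solution in $(0,\pi/|w_0|)$ by \cite{albert2025geodesicsgrushinspaces}, and $D$ vanishes at $\pi/|w_0|$. In case ii) we use \eqref{L zero case}, which after the limit $\phi\to0$ (or $\phi\to\pi/2$) factors as $\sin(w_0t)\bigl(w_0^3t\rmax^2\cos(w_0t)-\sin(w_0t)\bigr)$. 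The first factor vanishes first at $\pi/|w_0|$. For the second factor, note that $\rmax=r_0$ and $w_0^2\rmax^2\le 1$ when $\phi\to0$, and we need to show $w_0t\,c\cos(w_0t)\neq\sin(w_0t)$ on $(0,\pi/|w_0|)$ with $c=w_0^2\rmax^2\le1$. This holds because $\tan x>cx$ on $(0,\pi/2)$, while the signs differ on $(\pi/2,\pi)$. Case iii) is case ii) with $w_0=\pm1/r_0$, after multiplying by $r(t)$ to remove the $1/r(t)$ factor at the instants where $r=0$.

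The main obstacle is justifying the edge cases by passing to limits. Nondegeneracy of the Jacobian at the limit covector does not follow just from the limit of the generic formula; one must check that the limit really is the Jacobian in a valid chart. I would handle this by continuity. Since $H_f$ is $C^2$ for $f(r)=r$, the map $\operatorname{Exp}_{q_0}$ is $C^1$, so the Jacobian in the fixed chart $(K,w_0)$ (respectively $(K,L)$) is continuous in $\lambda_0$. It therefore equals the limit of the generic expressions multiplied by the explicit chart-change factors, which are tracked in \eqref{Jacobian diffeo} and remain finite and nonzero. The limiting formula then legitimately computes the determinant. Finally, the Jacobian actually vanishes at $t=\pi/|w_0|$ in every case with $w_0\neq0$, so the infimum defining $t_{\operatorname{con}}$ is attained there, which gives $t_{\operatorname{con}}(\gamma)=\pi/|w_0|$.
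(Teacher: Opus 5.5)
Your proposal is correct and follows the paper's route. It uses the straight-line determinant \eqref{straight line determinant}, the corollary to Lemma \ref{Factorization Lemma} on $E_1^*$, and the limiting computations \eqref{cancellation K=0} and \eqref{L zero case} for cases i)--iii); your continuity-of-the-Jacobian argument justifies the edge-case limits, which the paper leaves implicit. Two slips are harmless. As $\phi\to0$ it is $\rmin$, not $\rmax$, that tends to $r_0$, but your bound $c=w_0^2\rmax^2\le 1$ holds regardless since $w_0^{-2}=\rmin^2+\rmax^2$. In case ii) the chart factor $\partial(\rmin,\rmax)/\partial(K,L)\propto\sin\phi\cos\phi$ tends to $0$, offsetting the blow-up of $D$, rather than "remaining finite and nonzero"; your continuity argument does not need that property.
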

\subsection{Extended Hadamard Argument}
Note that for $K\neq 0$, the conjectured cut time $T$ arising from Theorem \ref{Geodesic and Cut time Theorem} simplifies exactly to the critical time that we studied in the previous section, namely $T=\frac{\pi}{\lvert w_0\rvert}=t_{\operatorname{con}}$. By continuous extension to the whole energy shell, we make a cut time conjecture of $t_*=\frac{\pi}{\lvert w_0\rvert}$. We remark that the presence of a conjugate time at exactly the cut time for all non straight-line geodesics is not an accident. Indeed, the endpoint map drops rank precisely because at $t_*$, it takes values on a codimension 2 sub-manifold of $\mathbb{R}^3$, where a geodesic $\gamma$ with initial covector $\lambda_0=(u_0,v_0,w_0)\in \{2E=1\}$ coincides not just with the symmetrizing geodesic $\hat{\gamma}$ described by Theorem \ref{Geodesic and Cut time Theorem}, but also all other geodesics whose initial covector shares the same $w_0$. In this section we carry show the details of this and carry out the necessary steps to implement Theorem \ref{extended Hadamard} in the case of $f(r)=r$. 
\begin{theorem}\label{Optimal Synthesis for f(r)=r}
    Let $q_0\in \mathbb{R}^3\setminus \Sigma$ be a Riemannian point in the radial Grushin space with $f(r)=r$ and $\gamma(t)=(x(t),y(t),z(t))$ a unit speed geodesic with initial covector $\lambda_0=(u_0,v_0,w_0)\in E_1:=\{\lambda_0\in T^*_{q_0}\mathbb{R}^3: H(q_0,\lambda_0)=1/2\}$. Define $t_*:E_1\rightarrow (0,\infty]$ by $t_*(\lambda_0)=\frac{\pi}{\lvert w_0\rvert}$ and put $\operatorname{Cut}^*(q_0)=\{(-x_0,-y_0,z): \lvert z-z_0\rvert\geq \frac{r_0^2\pi}{2}\}$. Then $t_{\operatorname{cut}}=t_*$ and $\operatorname{Cut}(q_0)=\operatorname{Cut}^*(q_0)$. 
\end{theorem}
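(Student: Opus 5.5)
The plan is to verify the four hypotheses of Theorem \ref{extended Hadamard} for $t_*(\lambda_0)=\pi/\lvert w_0\rvert$, after first checking that geodesics really are not minimizing past $t_*$. For $K\neq 0$ the latter is Corollary \ref{corollary}, since in the integrable case $f(r)=r$ the integral $\int_0^t \lvert K\rvert/r^2$ equals $\pi$ exactly at $t=\pi/\lvert w_0\rvert$. This follows from the explicit formula for $\theta$: over a half period $w_0 t\in[0,\pi]$ the $\arctan$ term, together with the jump counter $\xi$, advances by exactly $\pi$.

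For $K=0$ the symmetrizing geodesic degenerates, so I would instead use the observation made before the theorem. At $t_*$, every unit-speed geodesic with the same $w_0$ lands at the same point $(-x_0,-y_0,z_0+\operatorname{sign}(w_0)\pi/(2\lvert w_0\rvert^{2}))$. To see this, note that $x(t),y(t)$ solve the harmonic oscillator $\ddot x=-w_0^2x$, $\ddot y=-w_0^2 y$, so at $w_0t=\pi$ we get $(x,y)=-(x_0,y_0)$ regardless of $(u_0,v_0)$. Then $z(t_*)-z_0=w_0\int_0^{t_*}r^2$ can be computed explicitly: with $x=x_0\cos w_0t+\frac{u_0}{w_0}\sin w_0t$, the cross terms integrate to zero over the half period, and using $u_0^2+v_0^2=1-w_0^2r_0^2$ one gets
\[
z(t_*)-z_0=\operatorname{sign}(w_0)\Big(\frac{r_0^2\pi}{2}+\frac{\pi(1-w_0^2r_0^2)}{2w_0^2}\cdot\frac{1}{1}\Big)\cdot\tfrac12\cdot 2 ,
\]
which simplifies to $\operatorname{sign}(w_0)\frac{\pi}{2w_0^2}$ after cancellation. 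Since $\lvert w_0\rvert\le 1/r_0$, this value ranges over $\lvert z-z_0\rvert\ge r_0^2\pi/2$. A one-parameter circle of geodesics therefore meets at the same point, so none is minimizing past $t_*$. This identifies $\operatorname{Cut}^*(q_0)$ as stated: a vertical line through the antipodal point $(-x_0,-y_0)$ with a gap around $z_0$.

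Next I verify the hypotheses in turn. Hypothesis (2) is Theorem \ref{Full Conjugate Time Theorem}. For hypothesis (4), $\mathbb{R}^3$ minus two closed vertical rays lying on a common line is homeomorphic to $\mathbb{R}^3$ minus a line with a segment added back in. Equivalently, it deformation retracts onto the complement of a line punctured by a bridge, and a loop around the line can be slid over the gap segment. Hence the complement is simply connected, since removing a closed ray does not create $\pi_1$.

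For hypothesis (3) I would show the inclusion $\operatorname{Exp}(N)\cap\operatorname{Cut}^*=\emptyset$. The radius $r(t)$ could vanish or reach $(-x_0,-y_0)$ only at $w_0t=\pi$, where a direct computation shows the planar point is antipodal only at $t_*$. Surjectivity then follows from properness together with the absence of critical points: by hypothesis (1) and hypothesis (2), $\operatorname{Exp}|_N$ is a local diffeomorphism and a proper map into $\mathbb{R}^3\setminus\operatorname{Cut}^*$, so its image is open and closed, hence everything.

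The main obstacle is hypothesis (1), properness of $\operatorname{Exp}|_N\colon N\to\mathbb{R}^3\setminus\operatorname{Cut}^*$. I would take a sequence $t_n\lambda_n\in N$ with images converging to some $q\notin\operatorname{Cut}^*$ and rule out escape. Three escape routes must be handled:
\begin{enumerate}
\item $t_n\to\infty$: this forces $w_0\to 0$. By the ball-box bound (Theorem \ref{Ball box Theorem}, or simply $\lvert\dot r\rvert\le1$ combined with the horizontal displacement) the image then goes to infinity, unless the trajectory is asymptotically a straight line, which is also unbounded.
\item $t_n\to t_*(\lambda_n)$ with $w_0$ bounded away from $0$: the limit is a point of $\operatorname{Cut}^*$ by the computation above, which is excluded.
\item $\lambda_n$ escaping toward $w_0\to\pm1/r_0$: this is compact on the energy shell, so it is harmless.
\end{enumerate}
The delicate part is the uniformity in route 1 as $w_0\to0$ with $t$ of order $1/\lvert w_0\rvert$. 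There I would use the explicit harmonic-oscillator formulas to show that $\lvert(x,y)\rvert$ or $\lvert z\rvert$ diverges.
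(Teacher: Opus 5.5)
\textbf{Gap in hypothesis (3).} Your verification of hypothesis (3) rests on a false claim: that the planar projection reaches $(-x_0,-y_0)$ only when $|w_0|t=\pi$, so that $\operatorname{Exp}_{q_0}(N)$ never meets the antipodal line. This is true for $K\neq 0$ but fails for $K=0$. Already the straight line with $w_0=0$ and $(u_0,v_0)=-(x_0,y_0)/r_0$ reaches $(-x_0,-y_0,z_0)$ at $t=2r_0<t_*=+\infty$. In general, write $\tau=|w_0|t$ and use your own formula $(x,y)(t)=(x_0,y_0)\cos\tau+(u_0,v_0)\sin\tau/|w_0|$. A hit at some $\tau\in(0,\pi)$ occurs exactly when $(u_0,v_0)=-|w_0|\cot(\tau/2)\,(x_0,y_0)$, so $K=0$ and $L<0$: the geodesic heads toward the axis and crosses it. The energy constraint then forces $|w_0|r_0=\sin(\tau/2)$. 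Your claim also contradicts the surjectivity you derive next. The segment $\{(-x_0,-y_0,z):|z-z_0|<\pi r_0^2/2\}$ lies in $\R^3\setminus\operatorname{Cut}^*(q_0)$ and is not reached at $t=t_*$, so it can only be reached strictly before $t_*$.

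The missing step is to compute the height at these early hits and check that it lands in the gap. Here $\rho(s)=r_0\sin(\tau/2-|w_0|s)/\sin(\tau/2)$, and integrating $w_0\rho^2$ over $[0,t]$ gives
\[
|z-z_0|=r_0^2\,\frac{\tau-\sin\tau}{1-\cos\tau}<\frac{\pi r_0^2}{2}.
\]
The inequality holds because this function of $\tau$ is increasing on $(0,\pi)$ (its derivative has the sign of $2\tan(\tau/2)-\tau$) and tends to $\pi/2$ as $\tau\to\pi$. The paper's proof records exactly this point: for $0<t<t_*$ the antipodal line is reached only by $K=0$ geodesics, and then with $|z-z_0|<\pi r_0^2/2$. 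Without it, the inclusion $\operatorname{Exp}_{q_0}(N)\subset\R^3\setminus\operatorname{Cut}^*(q_0)$ is unproved. Both your open--closed surjectivity argument and properness of $\operatorname{Exp}_{q_0}|_N$ as a map into $\R^3\setminus\operatorname{Cut}^*(q_0)$ depend on that inclusion.

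\textbf{Smaller points.} At the poles $w_0=\pm 1/r_0$, the circle of covectors sharing the same $w_0$ collapses to a single covector. So non-minimality past $t_*$ there must come from the conjugate point at $t_*$ given by Theorem \ref{Full Conjugate Time Theorem} (the case $K=L=0$), or from a limiting argument, not from coincident geodesics. In your escape route 1, the ball-box estimate and $|\dot r|\le 1$ only bound how far the endpoint can go, so they cannot show divergence. The explicit formulas do. If $|w_0|t_n$ stays away from $\pi$, the term $(u_0,v_0)\sin(|w_0|t_n)/|w_0|$ blows up. If $|w_0|t_n\to\pi$ with bounded horizontal part, then $|z-z_0|\sim\pi/(2w_0^2)\to\infty$.

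Apart from this, your route is sound and in places cleaner than the paper's. The harmonic-oscillator form gives $\operatorname{Exp}_{q_0}(t_*\lambda_0)$ immediately. You correctly set up properness into $\R^3\setminus\operatorname{Cut}^*(q_0)$ rather than into $\R^3$. And you get surjectivity from local diffeomorphism, properness and connectedness, instead of the paper's appeal to completeness and existence of minimizers.
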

\begin{proof}
Let $N=\{t\lambda_0: \lambda_0\in E_1, t<t_*(\lambda_0)\}$ be the star shaped \emph{conjectured injectivity domain}. In order to carry out the proof of Theorem \ref{Optimal Synthesis for f(r)=r}, we must verify (1)-(4) in the statement of Theorem \ref{extended Hadamard}. The proof of (1) is a classic ``compactness by energy" argument that applies to many other settings in sub-Riemannian geometry. See \cite{Borza2022}, \cite{AgrachevBarilariBoscainBook2020} or Section 6 of \cite{albert2025geodesicsgrushinspaces}. 

To show that $\operatorname{Exp}_{q_0}\rvert_N$ is proper, Let $S\subset\mathbb{R}^3$ be compact and suppose
$\operatorname{Exp}_{q_0}(t\lambda_0)\in S$.
Since $\lambda_0\in E_1$, the unit energy condition satisfies
$u^2+v^2 = 1-w_0^2 r^2$.
On $S$ the radial coordinate $r$ is bounded, hence there exists $\delta>0$ and
$c>0$ such that $|w_0|\le\delta$ implies $\sqrt{u^2+v^2}\ge c$.

Consequently, for such geodesics the horizontal projection has linear growth, and there exists $T_S>0$ such that
\[
t>T_S \quad\Rightarrow\quad \operatorname{Exp}_{q_0}(t\lambda_0)\notin S.
\]
This shows that $t$ is uniformly bounded on $\operatorname{Exp}_{q_0}^{-1}(S)$.
Since $E_1$ is compact and $\operatorname{Exp}_{q_0}$ is continuous, the preimage
$\operatorname{Exp}_{q_0}^{-1}(S)$ is compact. 

We demonstrated (2) in the previous section. Namely, $t_*(\lambda_0)\leq t_{\operatorname{con}}(\lambda_0)$ for all $\lambda_0\in E_1$. 

Next, we claim that $\operatorname{Cut}^*(q_0)$ is the true locus of endpoints for the map $\operatorname{Exp}_{q_0}(t_*(\lambda_0)\lambda)0)$ taken over $\{2E=1\}\setminus\{w_0=0\}$. Indeed, note that $r(t_*)=r(0)$ and $\theta(t_*)=\pi+\theta_0$ by construction, so that $\operatorname{Exp}_{q_0}(t_*(\lambda_0)\lambda)0)$ takes values on the line $\{r=r(0),\theta=\pi+\theta_0\}$. For the $z$-coordinate, put $\eta(x)=1/2(x-\sin(x)\cos(x))=d/dx \sin(x)^2$, and we compute that for $w_0\neq 0$
\begin{align}
z(t_*;\lambda_0)=&z_0+w_0\int_0^{t_*} r^2(t; \lambda_0)\,dt\\
    \notag=&z_0+w_0t_*\rmin^2+(\rmax^2-\rmin^2)(\eta(w_0t_*+\phi)-\eta(\phi))\\
    \notag=&z_0+\operatorname{sign}(w_0)\frac{\pi}{2}(\rmin^2+\rmax^2)\\
    \notag=&z_0\pm \frac{\pi }{2w_0^2}.
\end{align}
Now note that the maximum value of $\lvert w_0\rvert$ on the energy shell $\{2E=1\}$ is $1/r_0$, so that the minimum value of $\lvert z(t_*;\lambda_0)-z_0\rvert$ is $\frac{\pi r_0^2}{2}$, which proves the claim. Notice that the value of $z(t_*;\lambda_0)$ depends only on $w_0$ and that $\operatorname{Cut}^*(q_0)$ is a union of two codimension 2 submanifolds of $\mathbb{R}^3$. 

Now for (3), note that since $(\mathbb{R}^3,d_{CC})$ is complete by Theorem \ref{metric}, there is a minimizing geodesic connecting $q_0$ to any other point in $\mathbb{R}^3$. This is a classical result in the theory of length spaces, whose proof can be found for instance in \cite{BuragoBuragoIvanovBookEng}. Since geodesics are not minimizing past the conjectured cut time $t_*$ and since $\operatorname{Cut}^*(q_0)$ is the true locus of endpoints for the map $\operatorname{Exp}_{q_0}(t_*(\lambda_0)\lambda_0)$, we obtain the inclusion $\operatorname{Exp}_{q_0}(N)\supset \mathbb{R}^3\setminus \operatorname{Cut}^*(q_0)$. On the other hand, $\operatorname{Exp}_{q_0}\rvert_N$ is clearly seen to take values in $\mathbb{R}^3\setminus \operatorname{Cut}^*(q_0)$. Indeed, $\exp_{q_0}(t \lambda_0)$ only hits the line $\{(-x_0,-y_0,z)\}$ for $0<t<t_*(\lambda_0)$ exactly when $K=0$, $r'(0)> 0$ and at $t=\frac{\pi-\phi}{\lvert w_0\rvert }$ This occurs with a $z$ coordinate satisfying $\lvert z-z_0\rvert< \frac{r_0^2 \pi}{2}$.

Finally, observe that $\mathbb{R}^3\setminus \operatorname{Cut}^*(q_0)$ is simply connected, so that (4) is also verified. This completes the proof. See Figure \ref{fig:cut locus}.
\begin{figure}
    \centering
    \includegraphics[width=0.5\linewidth]{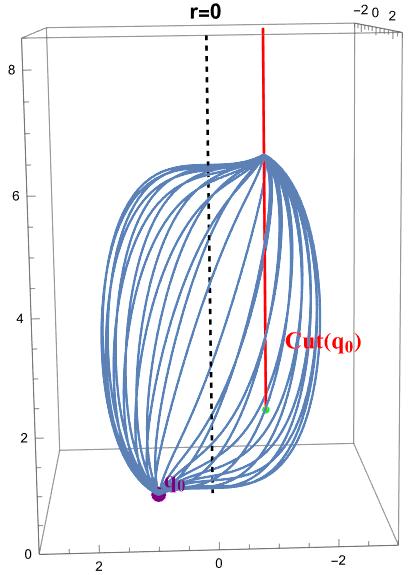}
    \caption{Portion of the cut locus $\operatorname{Cut}(q_0)=\{(-x_0,-y_0,z): \lvert z-z_0\rvert\geq \frac{\pi r_0^2}{2}\}$ for the Riemannian point $q_0=(1,0,0)$ in the Radial Grushin space with $f(r)=r$ and unit speed geodesic trajectories corresponding to $w_0=1/2$, all intersecting at $T=t_{\operatorname{cut}}=\frac{\pi}{\lvert w_0\rvert}$.}
    \label{fig:cut locus}
\end{figure}
\end{proof}

\section{Conclusion and Future Work}
As we stated previously, the main barrier to the full optimal synthesis in the general $f(r)$ setting is the inability to control conjuagacy. We view this as the most non-trivial step in executing extended Hadamard style arguments for optimal synthesis (This is (2) in Theorem \ref{extended Hadamard}). Obtaining conjugate times in the more general setting will likely require new techniques, and perhaps additional assumptions on the family of functions $\mathfrak{F}$. One might attempt an analysis of conjugate times in the non-integrable setting via Jacobi fields, variational inequalities, or stronger Sturm-type comparison results. A non-exhaustive list of sources that have explored related ideas are \cite{Coppel1978,AmannNeuberger1973,Lewis1976,Dosly1990,Nehari1974,CoddingtonLevinson1955,Wong1969}. It is also possible that a better understanding of the metric geometry of this class of radial Grushin spaces could illucidate a direct proof of the optimal synthesis. Such an approach that completely bypassed the extended Hadamard technique has been employed for Heisenberg groups in \cite{AgrachevBarilariBoscainBook2020}, for Reiter-Heisenberg groups in \cite{MontanariMorbidelli2024} and for the Cartan group in \cite{Montgomery2006}. 

Recall that for $f(r)=r$, we were able to show that the conjectured cut time $T(\lambda_0)=\min\{t>0: \lvert K\rvert \int_0^t r^2(s)\,ds=\pi\}$ actually reduced cleanly to $T(\lambda_0)=\frac{\pi}{\lvert w_0\rvert}$, which coincides exactly with the known cut time for 2D-Grushin geodesics. The key observation here is that we lost dependence on $K$ in the process of computing the integral $\lvert K\rvert \int_0^t r^2(s)\,ds$. For functions of the form $f(r)=r^\alpha$ with $\alpha>1$, the explicit cancellation observed in the $\alpha=1$ case may not persist. In particular, the integral 
\[
\int_0^t r(\tau)^{-2}\,d\tau
\] 
cannot in general be evaluated in closed form, and the resulting candidate cut time 
\[
T(K)=\min\Bigl\{t>0:\, |K|\int_0^t r(\tau)^{-2}\,d\tau = \pi\Bigr\}
\] 
may depend non-trivially on $K$. Consequently, it is not clear whether the singular geodesics with $K=0$ continue to share the same cut time as the nearby $K\neq 0$ geodesics, or whether higher-order factors in $r(t)$ prevent the simple limiting behavior observed for $\alpha=1$. Whether or not this occurs would be strong evidence to whether the conjectured cut time $T$ is still accurate for $\alpha>1$. Indeed, we know from \cite{Borza2022} that the true cut time for $\alpha$-Grushin plane Riemannian geodesics is exactly $\pi_\alpha/\lvert w_0\rvert^{1/\alpha}$, and if this is not obtained as a limit of the $T$ we defined above, then either the cut locus is a substantially more irregular object than what we have encountered here, or the conjectured cut time $T$ is simply false. It is possible that the geodesics with $K=0$ incur a conjugate time occurring earlier than $\pi_\alpha/\lvert w_0\rvert^{1/\alpha}$ in such a way that still respects the conjectured cut time. We showed in Theorem \ref{hitting singular set}, that this can not occur before $t_\Sigma$, the time it takes for such geodesics to reach the singular set, as the $K=0$ geodesics are minimizing at least to this time. It is not clear how far these geodesics may be extended beyond $t_\Sigma$. 

As far as applications are concerned, in the $f(r)=r$ case, we note that since geodesics and their cut times can be obtained explicitly, an exploration into whether or not the metric measure space $(\mathbb{R}^3,d_{CC},\mathcal{L}^3)$, where $\mathcal{L}^3$ is the Lebesgue measure satisfies the so-called ``measure contraction property" (MCP) is on the table. It is part of the ongoing effort to understand curvatures in general metric measure spaces, and especially in sub-Riemannian or sub-Riemannian adjacent structures. Furthermore, questions surrounding canonical metric measure space properties of $(\mathbb{R}^3,d_{CC},\mathcal{L}^3)$, such as volume growth and point-wise heat kernel estimates in the style of \cite{BarilariRizzi2018} can now be answered in principle. 
\appendix
\section{Proof of Theorem \ref{hitting singular set}}
\label{s.Appendix}
\begin{proof}[Proof of Theorem \ref{hitting singular set}]
    Let $q_0=(x_0,y_0,z_0)\in \mathbb{R}^3\setminus \Sigma$ and let $\gamma=(x(t),y(t),z(t))$ be an arc length parametrized geodesic with $\gamma(0)=q_0$ and such that for the initial co-vector $\lambda_0=(u_0,v_0,w_0)\in T^*_{q_0}\mathbb{R}^3$ it holds that $K=x_0v_0-y_0u_0=0$. Let $\Pi_{\theta_0}$ be the vertical plane containing both the origin and $q_0$ tilted to the angle $\theta_0\in [0,2\pi)$ from the $x$-axis. In the coordinates on $\Pi_{\theta_0}$ write $\gamma(t)=(\rho(t),z(t))$. Note that $\gamma$ takes values in the half plane $\{\theta=\theta_0\}$ until $t_\Sigma=\{t>0: \rho(t)=0\}<+\infty$. Let $(\rho_1,z_1)\in \{\theta=\theta_0\}$ be such that $\rho_1<\rho_0=r_0$. We will later consider the case when $\rho_1\geq \rho_0$. 
    
    Consider now the parameters $L,w_0$ on the energy shell slice $\{K=0\}\cap\{2E=1\}$, where we have \[\frac{L^2}{r_0^2}+f(r_0)^2w_0^2=1.\]There is a unique time $t_{\rho_1}=t_{\rho_1}(L,w_0)$ such that $\rho(t_{\rho_1})=\rho_1$. Note that $\dot{\rho}(t_{\rho_1})<0$. We parametrize $L=r_0\cos(\psi)$ and $w_0=\frac{1}{f(r_0)}\sin(\psi)$ for $\psi\in [0,2\pi)$. We will define \[z_{\rho_1}(\psi)=z(t_{\rho_1}; L(\psi),w_0(\psi)).\] Note that by the chain rule, we have 
    \begin{align*}
        \dot{z}_{\rho_1}(\psi)=&\dot{z}(t_{\rho_1})\dot{t}_{\rho_1}(\psi)+z_{L}(t_{\rho_1})\dot{L}(\psi)+z_{w_0}(t_{\rho_1})\dot{w_0}(\psi)\\
        =&w_0f(\rho_1)^2\dot{t}_{\rho_1}(\psi)+z_{L}(t_{\rho_1})\dot{L}(\psi)+z_{w_0}(t_{\rho_1})\dot{w_0}(\psi)
        \end{align*}
    Using an integration by parts scheme similar to that of Lemma \ref{technical}, we have that without loss of generality taking $w_0>0$ or $\psi\in (0,\pi)$
    \begin{align}\label{hitting sing set partials}
        z_{w_0}(t_{\rho_1})=&-\frac{1}{w_0}\dot{\rho}(t_{\rho_1})\rho_{w_0}(t_{\rho_1})\\
        z_{L}(t_{\rho_1})=&-\frac{1}{w_0}\dot{\rho}(t_{\rho_1})\rho_{L}(t_{\rho_1}).
    \end{align}
    Furthermore, differentiating $\rho(t_1(\psi); L(\psi),w_0(\psi)=\rho_1$, 
    \begin{align}\label{t rho 1 derivative}
        \dot{t}_{\rho_1}(\psi)=-\frac{\rho_L(t_{\rho_1})\dot{L}(\psi)+\rho_{w_0}(t_{\rho_1})\dot{w_0}(\psi)}{\dot{\rho}(t_{\rho_1})}.
    \end{align}
    We obtain using a similar method to the energy identity argument in the proof of Theorem \ref{singular point optimal synth} that
    \begin{align}
         \dot{z}_{\rho_1}(\psi)=&-(\dot{\rho}^2(t_{\rho_1})+w_0^2f(\rho_1)^2)\frac{\rho_L(t_{\rho_1})\dot{L}(\psi)+\rho_{w_0}(t_{\rho_1})\dot{w_0}(\psi)}{w_0\dot{\rho}(t_{\rho_1})}\\
         \notag=&\,\frac{\rho_L(t_{\rho_1})\dot{L}(\psi)+\rho_{w_0}(t_{\rho_1})\dot{w_0}(\psi)}{w_0\dot{\rho}(t_{\rho_1})}\\
         \notag=&\,\frac{\dot{\rho}(t_{\rho_1})(\partial_Lt_{\rho_1}\dot{L}(\psi)+\partial_{w_0}t_{\rho_1}\dot{w}_0(\psi))}{\dot{\rho}(t_{\rho_1})}\\
         \notag=&\,(\partial_{w_0}t_{\rho_1})(L(\psi),w_0(\psi))\dot{w_0}(\psi).
    \end{align}
    where we have used in the last line that $t_{\rho_1}$ does not depend on $L$. To expand on this, note that if $L>0$ or equivalently $\psi\in (0,\pi/2)$, we have for $\rho^*=\rho^*(w_0)$ the turning point of the trajectory of $\rho$, satisfying $f(\rho^*)=1/w_0$, that 
    \begin{align*}
        t_{\rho_1}(w_0)=\left(2\int_{\rho_0}^{\rho^*}+\int_{\rho_1}^{\rho_0}\right)\frac{d\rho}{\sqrt{1-w_0^2f(\rho)^2}}
    \end{align*}
    We can show then that $t_{\rho_1}$ is decreasing in $w_0$, so that $\dot{z}_{\rho_1}(\psi)>0$ for $\psi\in (0,\pi/2)$. Furthermore, we may show using the same argument as in the proof of Theorem \ref{singular point optimal synth} that $z_{\rho_1}(\psi)\to +\infty$ as $\psi\to 0^+$. Here we strongly use hypothesis \ref{A4}. There is a critical point at $\psi=\pi/2$, but this will end up being a saddle type critical point. Now for $\psi\in (\pi/2,\pi)$, in other words $L<0$, we drop the integral term $2\int_{\rho_0}^{\rho_0^*}$ in the above computation. We conclude that $z_{\rho_1}(\psi)$ is again decreasing on $(\pi/2,\pi)$, going to $0$ as $\psi\to\pi$. It follows that for each $(\rho_1,z_1)$ in the half plane $\{\theta=\theta_0\}$ with $\rho_1<\rho_0$ and $z_1>z_0$, there is a unique trajectory $\rho$ meeting $(\rho_1,z_1)$. The argument for $\rho_1<\rho_0$ and $z_1<z_0$ is identical, simply switching the sign of $w_0$.  

    In the case when $\rho_1\geq \rho_0$, and without loss of generality $z_1>z_0$, a trajectory $\rho(t)$ will either miss $\rho_1$, meet it exactly once at its turning point $\rho^*$, or will meet it exactly twice. We fix $\rho_1$ and consider only the $w_0\in [0,1/f(\rho_1)]$ and corresponding $L\geq 0$ (here taking only $\psi\in (0,\pi/2]$), so that the turning point $\rho^*=\rho^*(w_0)\geq \rho_1$, and the trajectory $\rho(t;w_0,L)$ actually meets $\rho_1$ at two times (which may coincide) $t_{\rho_1}$ and $\hat{t}_{\rho_1}$. We then follow a proof extremely similar to that of Theorem \ref{singular point optimal synth} and form two branches of the function $z$ by setting $z_{\rho_1}(\psi)=z(t_{\rho_1};L(\psi),w_0(\psi))$ and $\hat{z}_{\rho_2}(\psi)=z(\hat{t}_{\rho_1}; L(\psi),w_0(\psi))$. We may show that $z_{\rho_1}(\psi)$ is increasing on $(0,\psi_0]$, where $\psi_0$ is the $\psi$ coordinate of $w_0=1/f(\rho_1)$ and that $z_{\rho_1}(\psi)\to z_0$ as $\psi\to 0^+$. On the other hand, we may show that $\hat{z}_{\rho_1}(\psi)$ is decreasing on $(0,\psi_0]$ and $\hat{z}_{\rho_1}(\psi)\to +\infty$ as $\psi\to 0^+$. Furthermore, both branches glue together at $\psi=\psi_0$, and thus all values of $z_1$ are hit uniquely by one of two branches. The case when $z_0>z_1$ is identical, taking this time $w_0<0$.

    We have shown that any $(\rho_1,z_1)$ in the half plane is hit by a unique geodesic such that $K=0$. We may invoke Corollary \ref{corollary} to see that a geodesic with $K\neq 0$ necessary has $\theta$ either strictly increasing or decreasing, and may only hit a point on the half plane $\{\theta=\theta_0\}$ after the geodesic has undergone a full $2\pi$ rotation around $\Sigma$, but this will occur after the time $T=\min\{t>0: \theta(t)=\theta_0\pm\pi\}$, so that such a geodesic is not minimizing when it hits $(\rho_1,z_1)$. 
\end{proof}
\bibliographystyle{plain}
\bibliography{Radial_Grushin}
\end{document}